\documentclass[11pt,final,journal,onecolumn]{IEEEtran}
\usepackage{amsmath,amssymb,color,amsmath, graphicx, float, caption, subcaption, mathrsfs,color} \usepackage[ruled]{algorithm2e} %{latexsym,amssymb,enumerate,multirow,graphicx}
\usepackage{bm}
\usepackage{epstopdf }
\usepackage{type1cm} % scalable fonts
\usepackage{lettrine}
\usepackage[T1]{fontenc}
\usepackage{selinput}
\SelectInputMappings{%
  eacute={é},
}
\usepackage[font={small}]{caption}
\usepackage[font={footnotesize}]{subcaption}
\usepackage{cite}
\usepackage{amsthm}
\usepackage{url}

\setlength\abovedisplayskip{4pt}
\setlength\belowdisplayskip{5pt}
\linespread{1.5}

\theoremstyle{plain}
\newtheorem{theorem}{\textbf{Theorem}}

\newtheorem{definition}[theorem]{\textbf{Definition}}
\newtheorem{lemma}[theorem]{\textbf{Lemma}}
\newtheorem{corollary}[theorem]{\textbf{Corollary}}

\makeatletter
\@addtoreset{theorem}{section}
\makeatother

%% Bold English Capital Letters

\def\A{\bm{A}}

\def\D{\bm{D}}

\def\I{\bm{I}}

\def\L{\bm{L}}

\def\N{\bm{N}}

\def\P{\bm{P}}
\def\Q{\bm{Q}}
\def\R{\bm{R}}
\def\S{\bm{S}}

\def\U{\bm{U}}
\def\V{\bm{V}}

\def\X{\bm{X}}

%% Bold English Small Letters

\def\d{\bm{d}}
\def\e{\bm{e}}

\def\g{\bm{g}}

\def\i{\bm{i}}

\def\r{\bm{r}}

\def\u{\bm{u}}
\def\v{\bm{v}}

\def\x{\bm{x}}
\def\y{\bm{y}}
\def\z{\bm{z}}

% calligraphic Letters

%% Bold Greek Capital Letters

\def\bSigma{\boldsymbol{\Sigma}}

\def\bOmega{\boldsymbol{\Omega}}

%% Bold Greek Small Letters

\def\balpha{\boldsymbol{\alpha}}

\def\bomega{\bm{\omega}}

%% math operators

\def\Ex{\operatorname{E}}

%% special Definitions

\def\CRLB_pe{\text{CRLB}}
\def\CRLB_pe{\text{CRLB}(\mathbf{p}_{e})}

\def\N02{\frac{N_0}{2}}

\def\0{\mathbf{0}}
\def\1{\mathbf{1}}

%\SetKw{Kw}{KwEvaluate}
\SetKwInput{kwEvaluate}{Evaluate}
\SetKwInput{kwSort}{Sort}
\SetKwInput{kwInput}{Input}
\SetKwInput{kwOutput}{Output}
\SetKwInput{kwInitialize}{Initialize}

\makeatletter

\makeatother

%\journal{Applied and Computational Harmonic Analysis}

\begin{document}

%\begin{frontmatter}
%
%\title{Generalizing CoSaMP to Signals from a Union of Low Dimensional Linear Subspaces} 
%
%\author[]{Tom Tirer\corref{cor1}}
%\ead{tirer.tom@gmail.com}
%\cortext[cor1]{Corresponding author}
%\author[]{Raja Giryes}
%\ead{raja@tauex.tau.ac.il}
%
%\address{The School of Electrical Engineering, Tel Aviv University, Tel Aviv 69978, Israel}

\title{%\huge CORNET INTERIM REPORT\\
%$\null$\\
Generalizing CoSaMP to Signals from a Union of Low Dimensional Linear Subspaces \\ }
\author{
    Tom~Tirer,
    and Raja~Giryes \\

    %\footnote{ Tom Tirer, Tel: +97236408605, Fax: +97236405027, Email: tirer.tom@gmail.com}
    \thanks{This research was supported by a Grant from the GIF, the German-Israeli Foundation for Scientific Research and Development.

The authors are with the School of Electrical Engineering, Tel Aviv University, Tel Aviv 69978, Israel. (email: tirer.tom@gmail.com, raja@tauex.tau.ac.il)}
          }
%\markboth{IEEE Transactions On Signal Processing, Vol. XX, No. Y, Month 2016}{Tirer at al.
%          : High Resolution Localization of Narrowband Radio Emitters Based on Doppler Frequency Shifts}
\maketitle

%%%%%%%%%%%%%%%%%%%%%%%%%%%%%%%%%%%%%%%%%%%%%%%%%%%%%%%%%%%%%
%%%%%%%%%%%%%%%%%%%%%%%%%%%%%%%%%%%%%%%%%%%%%%%%%%%%%%%%%%%%%
%%
%% ABSTRACT / KEYWORDS
%%
%%%%%%%%%%%%%%%%%%%%%%%%%%%%%%%%%%%%%%%%%%%%%%%%%%%%%%%%%%%%%
%%%%%%%%%%%%%%%%%%%%%%%%%%%%%%%%%%%%%%%%%%%%%%%%%%%%%%%%%%%%%
\begin{abstract}

The idea that signals reside in a union of low dimensional subspaces subsumes many low dimensional models that have been used extensively in the recent decade in many fields and applications. Until recently, the vast majority of works have studied each one of these models on its own. However, a recent approach suggests providing general theory for low dimensional models using their Gaussian mean width, which serves as a measure for the intrinsic low dimensionality of the data. In this work we use this novel approach to study a generalized version of the popular compressive sampling matching pursuit (CoSaMP) algorithm, and to provide general recovery guarantees for signals from a union of low dimensional linear subspaces, under the assumption that the measurement matrix is Gaussian. We discuss the implications of our results for specific models, and use the generalized algorithm as an inspiration for a new greedy method for signal reconstruction in a combined sparse-synthesis and cosparse-analysis model. We perform experiments that demonstrate the usefulness of the proposed strategy.

\end{abstract}

\begin{IEEEkeywords}
Sparse representation, compressive sampling, CoSaMP, Gaussian mean width, union of subspaces
\end{IEEEkeywords}

%\end{frontmatter}

%%%%%%%%%%%%%%%%%%%%%%%%%%%%%%%%%%%%%%%%%%%%%%%%%%%%%%%%%%%%%
%%%%%%%%%%%%%%%%%%%%%%%%%%%%%%%%%%%%%%%%%%%%%%%%%%%%%%%%%%%%%
%%
%% SECTION I: INTRODUCTION
%%
%%%%%%%%%%%%%%%%%%%%%%%%%%%%%%%%%%%%%%%%%%%%%%%%%%%%%%%%%%%%%
%%%%%%%%%%%%%%%%%%%%%%%%%%%%%%%%%%%%%%%%%%%%%%%%%%%%%%%%%%%%%

\section{Introduction}
\label{sec1}

In the recent decade many signal and image processing applications have benefited remarkably from adopting low dimensional models for the signals of interest \cite{bruckstein2009sparse, nam2013cosparse, candes2009exact, recht2010guaranteed, baraniuk2009random, yu2012solving}. The most popular form of this low dimensional modeling assumes that signals admit a sparse representation in a given dictionary - this is referred to as the "sparsity model" \cite{bruckstein2009sparse}. Other options for low dimensional modeling include the analysis cosparse framework \cite{nam2013cosparse}, the low-rank model \cite{candes2009exact, recht2010guaranteed}, low dimensional manifolds \cite{baraniuk2009random} and Gaussian mixture models \cite{yu2012solving}. For each of them, vast amounts of algorithms and theory were developed \cite{elad2010book, eldar2012compressed, foucart2013mathematical}. The common factor of all these models is that they characterize data with a number of parameters, which is significantly smaller than the ambient dimension of the data.

Recently, it was suggested by several authors that the intrinsic low dimensionality of the data can be measured by its Gaussian mean width \cite{chandrasekaran2012convex, plan2013robust}, which is strongly related to the statistical dimension \cite{amelunxen2014living}. This new view enables analyzing low dimensional data that do not have an explicit representation and to unify existing results, providing general theory and algorithms for signals residing in low dimensional models.

In this work we consider the problem of recovering an unknown signal $\x \in \Bbb R^n$ from $m$ linear noisy measurements of the form
\begin{align}
\label{Eq_model}
\y = \A \x + \e,
\end{align}
where $\A$ is an $m \times  n$ matrix with entries independently drawn from the standard normal distribution and $\e \in \Bbb R^m$ represents the noise. We assume that the noise does not depend on $\A$, and that its energy is bounded. Further, we assume that $m < n$.
As in this case, recovering $\x$ from $\y$ becomes an ill-posed problem, an additional prior on $\x$ is required. For example, in compressive sampling \cite{candes2008introduction, duarte2008single} the signal is assumed to admit a sparse representation, which allows its stable recovery from the measurements. In this work we rely on a more general signal model, a union of low dimensional linear subspaces.
More specifically, we assume that a possibly infinite set of finite-dimensional subspaces is given, and that $\x$ resides in one of these subspaces. However, we do not know a priori in which one.
This model subsumes many of the priors that were mentioned above, such as sparse representation; and more.

In the context of compressive sampling, two major strategies are used to recover a sparse signal from incomplete and possibly inaccurate samples. The first approach uses convex relaxation to formulate a convex program whose minimizer is known to approximate the target signal \cite{chen2001atomic, candes2005decoding, candes2006stable, candes2007dantzig}. The second one uses greedy methods to iteratively build an approximation, making locally optimal choices at each iteration. Examples of greedy programs include matching pursuit (MP) \cite{mallat1993matching}, orthogonal matching pursuit (OMP) \cite{chen1989orthogonal, pati1993orthogonal}, regularized orthogonal matching pursuit (ROMP) \cite{needell2010signal}, iterative hard thresholding (IHT) \cite{blumensath2009iterative}, subspace pursuit (SP) \cite{dai2009subspace} and compressive sampling matching pursuit (CoSaMP) \cite{needell2009cosamp}.
Recovery guarantees for the methods above (of both strategies) are mostly proved using the assumption that the measurement matrix $\A$ has the restricted isometry property (RIP) \cite{candes2005decoding}, with a sufficiently small RIP-constant.
In general, convex relaxation methods have better recovery guarantees than greedy methods, but they are also more computationally demanding. However, IHT, SP, and in particular CoSaMP, were shown to have recovery guarantees that are competitive with those of the convex relaxation methods.

In this work we focus on the greedy strategy for a general union of low dimensional subspaces. 
We study a generalized version of CoSaMP, and provide recovery guarantees that depend on the Gaussian mean width of a union of subspaces, which is strongly related to the one in which $\x$ resides.
We note that extensions of CoSaMP to different models exist \cite{baraniuk2010model, lee2010admira, davenport2013signal, giryes2015greedy, giryes2014greedy}. However, most of these works study specific models, while here we consider 
a general framework that includes these models and more, as discussed in Section \ref{Sec4}.
Moreover, Our proof technique, and assumptions on $\A$ and $\e$, facilitate obtaining bounds on the reconstruction error that are more robust to noise than previous results, which are based on RIP and its generalizations. 

The setting where $\x$ lies in a union of linear subspaces was studied in several works. Conditions for unique and stable sampling were developed in \cite{lu2008theory} for unions of infinitely many finite-dimensional subspaces, and in \cite{blumensath2009sampling} for finite unions of finite-dimensional subspaces. In \cite{puy2015recipes} the sampling theory was extended to models beyond unions of subspaces. However, all these works do not provide concrete recovery algorithms. 
Two prominent works that use results from \cite{blumensath2009sampling} and propose recovery algorithms are \cite{eldar2009robust} and \cite{baraniuk2010model}. In \cite{eldar2009robust} the modeling of each subspace in the union is rather general (though, less general than in this paper), but the number of possible subspaces is assumed to be finite and known, and the proposed recovery algorithm, which is based on convex relaxation, deeply depends on this assumption. In \cite{baraniuk2010model}, similarly to the work we present here, a generalized version of CoSaMP was studied. However, the signal model in \cite{baraniuk2010model} is restricted to predefined sparsity patterns under a predefined basis, thus the number of possible subspaces is known and the guarantees do not cover other models (such as sparsity under a given redundant dictionary). The model that we consider here does not assume that the union is finite. Hence, it includes specific models that are not included in \cite{eldar2009robust, baraniuk2010model}. One such example is a signal that can be formed as a rank-$k$ matrix.
Perhaps the most related work to the current paper is \cite{blumensath2011sampling}, which also considers reconstruction of signals from a possibly infinite union of linear subspaces. However, \cite{blumensath2011sampling} study the projected Landweber algorithm, which is basically a generalization of IHT to models beyond sparsity. Here we build on CoSaMP, which is stronger than IHT in general. In addition, our analysis shows more robustness to noise than the analysis in \cite{blumensath2011sampling}, which is based on assumptions that can be seen as a generalization of RIP.

\textbf{Contribution.} In this work we make the following contributions. We introduce a generalized CoSaMP (GCoSaMP) algorithm that relies on a general signal model - union of low dimensional subspaces. We show that when the number of measurements $m$ is large enough with respect to some constant $m_0$, which scales as the Gaussian mean width of the union of subspaces, we have that the reconstruction result $\hat{\x}$ satisfies
\begin{align}
\label{Eq_main_cont}
\|\hat{\x} - \x \|_2 \leq c_0 + c_1\|\e\|_2
\end{align}
with high probability, where $c_0$ is a constant that tends to zero as the number of iterations grows (to be more precise $c_0=\mathcal{O}(m^{-t/2})$, where $t$ is the number of iterations), and $c_1$ is another constant that obeys $c_1=\mathcal{O}(m^{-1})$. The significance of this result, except for its generality, is in showing that the effect of any stationary noise, that does not depend on $\A$, tends to zero as the number of measurements grows. The recovery guarantees hold for any algorithm that is an exact instance of GCoSaMP. Two such examples are CoSaMP and ADMiRA \cite{lee2010admira}. Another main contribution of the paper is introducing a new practical method for signal reconstruction, and possible decomposition, in a combined sparse synthesis and cosparse analysis model. The proposed technique is a relaxed version of GCoSaMP, specialized to this combined model. The idea of combining two sparse models in order to describe a signal has already been practiced in \cite{elad2005simultaneous}, where two adapted dictionaries (e.g., one dictionary adapted to represent textures, and the other to represent cartoons) were used. However, our scheme is significantly different than the one in \cite{elad2005simultaneous}. Our solution covers also the case of compressive and limited measurements, uses the recently proposed cosparse analysis model instead of a second sparse synthesis model, and provides a natural way to combine the two models, and any other ones.

The remainder of the paper is organized as follows.
In Section \ref{Sec2} we present the notation we use in this work and some preliminaries.
In Section \ref{Sec3} we present the GCoSaMP algorithm and prove the main theorem of the paper. We also discuss the  significance of the results in showing the denoising capabilities of the algorithm.
In Section \ref{Sec4} we discuss the implications of our results for several specific models that fall into the general union of subspaces framework, and in particular, we present SACoSaMP, a new signal recovery algorithm for sparse synthesis and cosparse analysis combined model.
Experiments that support our theoretical results for GCoSaMP, and experiments that examine the performance of SACoSaMP, are given in Section \ref{Sec5}.
Section \ref{Sec6} concludes the paper.

%%%%%%%%%%%%%%%%%%%%%%%%%%%%%%%%%%%%%%%%%%%%%%%%%%%%%%%%%%%%%
%%%%%%%%%%%%%%%%%%%%%%%%%%%%%%%%%%%%%%%%%%%%%%%%%%%%%%%%%%%%%
%%
%% SECTION II: Notations and Preliminaries
%%\
%%%%%%%%%%%%%%%%%%%%%%%%%%%%%%%%%%%%%%%%%%%%%%%%%%%%%%%%%%%%%
%%%%%%%%%%%%%%%%%%%%%%%%%%%%%%%%%%%%%%%%%%%%%%%%%%%%%%%%%%%%%

\section{Notations and Preliminaries}
\label{Sec2}

In this section we present notations, definitions and basic results that will be used in our work. We write $\| \cdot \|_2$ for the Euclidean norm of a vector, and  $\| \cdot \|$ for the spectral norm of a matrix. 
We denote the unit Euclidean ball and sphere in $\Bbb R^n$ by $\Bbb B^n$ and $\Bbb S^{n-1}$, respectively, and the $n \times n$ identity matrix by $\I_n$.
By abuse of notation, $C,c$ denote positive absolute constants whose values may change from instance to instance.
Given a linear subspace $\mathcal{V} \subset \Bbb R^n$, we denote the orthogonal projection onto $\mathcal{V}$ by $\P_{\mathcal{V}}$, and the orthogonal projection onto the orthogonal complement of $\mathcal{V}$ by $\Q_{\mathcal{V}} = \I_n - \P_{\mathcal{V}}$. The sum of sets $\mathcal{K}_1$ and $\mathcal{K}_2$ is defined as
\begin{align}
\label{Eq_K1plusK2_def}
\mathcal{K}_1+\mathcal{K}_2 \triangleq \left \{ \x_1+\x_2 \, : \, \x_1 \in \mathcal K_1, \x_2 \in \mathcal K_2 \right \}.
\end{align}
We denote by $\mathcal S$ a known, possibly infinite, set of finite-dimensional linear subspaces $\{\mathcal{V}_i\}$.
The $B$-order sum for the set $\mathcal{S}$, with an integer $B \geq 1$, is defined as
\begin{align}
\label{Eq_S_B_def}
\mathcal{S}^B \triangleq \left \{ \sum \limits_{i=1}^{B} \mathcal{V}_i \, : \, \mathcal{V}_i \in \mathcal{S} \right \}.
\end{align}
Note that $\mathcal{S}^B$ is also a known, possibly infinite, set of linear subspaces, and $\mathcal{S} \equiv \mathcal{S}^1$.
The notation $\mathcal{U}^B$ stands for the union of all subspaces in the set $\mathcal{S}^B$, namely
\begin{align}
\label{Eq_U_def}
\mathcal{U}^B \triangleq \left \{ \bigcup _{i} \mathcal V_i^B \, : \, \mathcal V_i^B \in \mathcal{S}^B \right \}.
\end{align}
It is easy to show that $\mathcal{U}^B$ can also be expressed as
\begin{align}
\label{Eq_U_B_def2}
\mathcal{U}^B = \sum \limits_{i=1}^{B} \mathcal{U}^1 = \left \{ \sum \limits_{i=1}^{B} \x_i \, : \, \x_i \in \mathcal{U}^1 \right \}.
\end{align}
We assume that the signal belongs to one of the subspaces in $\mathcal{S}$, i.e., $\x \in \mathcal{V}_0$, and $\mathcal{V}_0 \in \mathcal{S}$. However, this subspace is unknown, and therefore our signal model is $\x \in \mathcal{U}$, where $\mathcal{U} \equiv \mathcal{U}^1$.

To exemplify the definitions above, we consider the traditional sparsity model $\{ \x \in \Bbb R^n : \|\x\|_0 \leq k \}$, where $\|\cdot\|_0$ denotes the $\ell_0$ pseudo-norm, that counts the number of nonzero elements of a vector. Clearly, if $\mathcal S$ is the set of $\binom{n}{k}$ subspaces, where each subspace is spanned by a different selection of $k$ columns of the identity matrix $\I_n$, then $\x$ resides in one of the subspaces in $\mathcal S$, and the sparsity model can be compactly expressed as $\x \in \mathcal{U}$. Moreover, in this case, the set $\mathcal S^B$ contains all the $\binom{n}{Bk}$ subspaces spanned by different selections of $Bk$ columns of $\I_n$, and other subspaces of lower dimensions, which are contained in those $\binom{n}{Bk}$ subspaces. Lastly, from either (\ref{Eq_U_def}) or (\ref{Eq_U_B_def2}), we have $\mathcal U^B = \{ \x \in \Bbb R^n : \|\x\|_0 \leq Bk \}$.

\begin{definition}
\label{definition1}
The Gaussian mean width of a set $\mathcal{K} \in \Bbb R^n$ is defined as
\begin{align}
w(\mathcal{K}) \triangleq \Ex_{\g} \left \{ \underset{\z \in \mathcal{K}}{\operatorname{sup}} \langle \g,\z \rangle \right \}, \nonumber
\end{align}
where the expectation is taken over $\g \sim \mathcal{N}(\0,\I_n)$, a vector of independent zero-mean unit-variance Gaussians.
\end{definition}

The following lemma results from \cite[Cor 1.2]{gordon1988milman} together with standard concentration of measure for Gaussian random variable \cite{ledoux2013probability}.

\begin{lemma}
\label{lemma_gordon}
Let $\mathcal{K} \in \Bbb R^n$ be a closed set and let $\A$ be an $m \times n$ matrix with entries independently drawn from the standard normal distribution $\mathcal{N}(0,1)$. Also define
\begin{align}
b_m = \Ex \left \{ \| \g \|_2 \right \}, \nonumber
\end{align}
where $\g \in \Bbb \R^m$ is distributed as $\mathcal{N}(\0,\I_m)$. Then for all $\u \in \mathcal{K}$ and $\eta>0$ we have 
\begin{align}
\label{Eq_gordon_lb}
\frac{\| \A \u \|_2}{\| \u \|_2} \geq b_m - w(\mathcal{K} \cap \Bbb S^{n-1}) - \eta 
\end{align}
holds with probability at least $1-\mathrm{e}^{-\frac{\eta^2}{2}}$. Furthermore, for all $\u \in \mathcal{K}$ and $\eta>0$ we have 
\begin{align}
\label{Eq_gordon_ub}
\frac{\| \A \u \|_2}{\| \u \|_2} \leq b_m + w(\mathcal{K} \cap \Bbb S^{n-1}) + \eta 
\end{align}
holds with probability at least $1-\mathrm{e}^{-\frac{\eta^2}{2}}$.
\end{lemma}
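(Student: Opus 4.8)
The plan is to combine Gordon's Gaussian comparison inequality, quoted from \cite[Cor 1.2]{gordon1988milman}, which controls the \emph{expectation} of the extreme values of the map $\v \mapsto \|\A\v\|_2$ over $\mathcal{T} \triangleq \mathcal{K}\cap\Bbb S^{n-1}$, with Gaussian concentration of measure, which upgrades those expectation bounds to high-probability bounds. Throughout I regard $\A$ as a standard Gaussian vector in $\Bbb R^{mn}$. Since in every model we study $\mathcal{K}$ is a cone (a union of linear subspaces), every nonzero $\u \in \mathcal{K}$ satisfies $\u/\|\u\|_2 \in \mathcal{T}$, so that $\|\A\u\|_2/\|\u\|_2 = \|\A(\u/\|\u\|_2)\|_2$ is simply the value of $\v\mapsto\|\A\v\|_2$ at a point of $\mathcal{T}$; it therefore suffices to bound this map uniformly over $\mathcal{T}$.

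First I would invoke Gordon's theorem for the Gaussian process indexed by $\mathcal{T}$, which gives the pair of expectation estimates
\[
\Ex \inf_{\v\in\mathcal{T}} \|\A\v\|_2 \ \geq\ b_m - w(\mathcal{T}), \qquad \Ex \sup_{\v\in\mathcal{T}} \|\A\v\|_2 \ \leq\ b_m + w(\mathcal{T}),
\]
where $w(\mathcal{T})$ is the Gaussian mean width of Definition \ref{definition1} and $b_m = \Ex\|\g\|_2$ with $\g \sim \mathcal{N}(\0,\I_m)$. The lower estimate is the content of Gordon's minimum bound, while the matching upper estimate on the maximum follows from the companion Sudakov--Fernique inequality.

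Next I would verify that the maps $f_{-}(\A) \triangleq \inf_{\v\in\mathcal{T}}\|\A\v\|_2$ and $f_{+}(\A) \triangleq \sup_{\v\in\mathcal{T}}\|\A\v\|_2$ are $1$-Lipschitz with respect to the Frobenius norm: for fixed $\v \in \Bbb S^{n-1}$ one has $\big| \|\A\v\|_2 - \|\B\v\|_2 \big| \leq \|(\A-\B)\v\|_2 \leq \|\A-\B\|_F$, and an infimum or supremum of a family of $1$-Lipschitz functions is again $1$-Lipschitz. The Gaussian concentration inequality then yields, for each $\eta>0$, the one-sided tail bounds $\Pr\!\big(f_{-} \leq \Ex f_{-} - \eta\big) \leq \mathrm{e}^{-\eta^2/2}$ and $\Pr\!\big(f_{+} \geq \Ex f_{+} + \eta\big) \leq \mathrm{e}^{-\eta^2/2}$. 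Combining these with the expectation estimates, with probability at least $1 - \mathrm{e}^{-\eta^2/2}$ we obtain $\inf_{\v\in\mathcal{T}}\|\A\v\|_2 \geq b_m - w(\mathcal{T}) - \eta$, and the cone normalization promotes this to (\ref{Eq_gordon_lb}) for every $\u\in\mathcal{K}$; the upper bound (\ref{Eq_gordon_ub}) follows identically from $f_{+}$.

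The only genuinely deep analytic ingredient is Gordon's comparison inequality itself, which I would not reprove but quote; granting it, the main obstacle reduces to bookkeeping that must nonetheless be done carefully: pinning the Lipschitz constant at exactly $1$ so that the concentration exponent is precisely $\eta^2/2$ with no spurious factors, and pairing each one-sided expectation bound with the correct tail of the concentration inequality. The one substantive point worth stating explicitly is the cone property of $\mathcal{K}$, which is what allows a uniform bound over the sphere section $\mathcal{T}$ to be transferred to the ratio $\|\A\u\|_2/\|\u\|_2$ for all $\u\in\mathcal{K}$.
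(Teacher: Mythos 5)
Your proposal is correct and follows exactly the route the paper itself indicates: the paper gives no written proof of Lemma \ref{lemma_gordon}, citing only \cite[Cor 1.2]{gordon1988milman} together with standard Gaussian concentration of measure, which is precisely the argument you spell out (Gordon's comparison for the expectation bounds, $1$-Lipschitzness of $\inf_{\v\in\mathcal{T}}\|\A\v\|_2$ and $\sup_{\v\in\mathcal{T}}\|\A\v\|_2$ in the Frobenius norm, and the one-sided $\mathrm{e}^{-\eta^2/2}$ tails). Your explicit remark that the cone property of $\mathcal{K}$ is what lets the uniform bound over $\mathcal{K}\cap\Bbb S^{n-1}$ transfer to the ratio $\|\A\u\|_2/\|\u\|_2$ is a legitimate clarification of a hypothesis the paper leaves implicit, since all sets to which the lemma is applied are unions of linear subspaces.
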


\begin{lemma}
\label{lemma_intro}
Consider the assumptions and notations of Lemma \ref{lemma_gordon}. If the positive parameters $\mu$ and $\eta$ satisfy $\mu\leq \left ( b_m+w(\mathcal{U}^B \cap \Bbb S^{n-1})+\eta \right )^{-2}$ then for all $\mathcal{V} \in \mathcal{S}^B$, we have
\begin{align}
\label{Eq_intro}
\| \P_{\mathcal{V}} (\I_n - \mu \A^*\A) \P_{\mathcal{V}} \| \leq 1- \mu \left [b_m - w \left (\mathcal{U}^B \cap \Bbb S^{n-1} \right )-\eta \right ]^2
\end{align}
holds with probability at least $1-2\mathrm{e}^{-\frac{\eta^2}{2}}$.
\end{lemma}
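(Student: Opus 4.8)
The plan is to reduce the operator norm to a scalar supremum and then pin that supremum down with the two-sided bounds furnished by Lemma \ref{lemma_gordon}. First I would note that $\P_{\mathcal{V}}(\I_n - \mu \A^*\A)\P_{\mathcal{V}}$ is symmetric, so its spectral norm equals $\sup_{\|\z\|_2=1}\left|\langle \z, \P_{\mathcal{V}}(\I_n - \mu\A^*\A)\P_{\mathcal{V}}\z\rangle\right|$. Because this operator annihilates the orthogonal complement of $\mathcal{V}$, the supremum is attained on $\mathcal{V}$ itself, and for a unit vector $\z \in \mathcal{V}$ one has $\P_{\mathcal{V}}\z = \z$, so the quadratic form collapses to $1 - \mu\|\A\z\|_2^2$. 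Hence
\begin{align}
\|\P_{\mathcal{V}}(\I_n - \mu\A^*\A)\P_{\mathcal{V}}\| = \underset{\z \in \mathcal{V},\, \|\z\|_2=1}{\operatorname{sup}} \left| 1 - \mu\|\A\z\|_2^2 \right|. \nonumber
\end{align}

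Next I would invoke Lemma \ref{lemma_gordon} with $\mathcal{K}=\mathcal{U}^B$, which yields \emph{uniform} two-sided control of $\|\A\u\|_2/\|\u\|_2$ over the whole union. Applying the union bound to (\ref{Eq_gordon_lb}) and (\ref{Eq_gordon_ub}), the inequalities
\begin{align}
b_m - w(\mathcal{U}^B \cap \Bbb S^{n-1}) - \eta \leq \frac{\|\A\u\|_2}{\|\u\|_2} \leq b_m + w(\mathcal{U}^B \cap \Bbb S^{n-1}) + \eta, \qquad \forall\, \u \in \mathcal{U}^B, \nonumber
\end{align}
hold simultaneously with probability at least $1-2\mathrm{e}^{-\eta^2/2}$. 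The crucial observation is that every $\mathcal{V} \in \mathcal{S}^B$ satisfies $\mathcal{V} \subseteq \mathcal{U}^B$ by the definition (\ref{Eq_U_def}) of the union, so these bounds apply to every unit vector $\z \in \mathcal{V}$ at once, for \emph{all} $\mathcal{V}$ in the set $\mathcal{S}^B$, on a single high-probability event.

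To finish, write $L = b_m - w(\mathcal{U}^B\cap\Bbb S^{n-1}) - \eta$ and $U = b_m + w(\mathcal{U}^B\cap\Bbb S^{n-1}) + \eta$, and work in the regime $L \geq 0$ (the case of interest, which holds once $m$ is large enough that $b_m > w(\mathcal{U}^B\cap\Bbb S^{n-1})+\eta$, since $b_m \approx \sqrt{m}$). Squaring then gives $L^2 \leq \|\A\z\|_2^2 \leq U^2$ for every unit $\z \in \mathcal{V}$. The hypothesis $\mu \leq U^{-2}$ forces $1 - \mu U^2 \geq 0$, and since $L^2 \leq U^2$ we also get $0 \leq 1 - \mu U^2 \leq 1 - \mu L^2$. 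Consequently $1 - \mu U^2 \leq 1 - \mu\|\A\z\|_2^2 \leq 1 - \mu L^2$ confines the quantity to a nonnegative interval, whence $|1-\mu\|\A\z\|_2^2| \leq 1 - \mu L^2$, which is exactly (\ref{Eq_intro}).

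I expect the main obstacle to be the uniformity over the possibly infinite family $\mathcal{S}^B$: one cannot afford a separate high-probability event for each subspace. This is precisely what the Gaussian-mean-width formulation buys us, since applying Gordon's inequality to the single set $\mathcal{U}^B$ already encodes the complexity of the entire union through $w(\mathcal{U}^B\cap\Bbb S^{n-1})$, yielding a bound uniform in $\mathcal{V}$ from one application of Lemma \ref{lemma_gordon}. A secondary technical point worth flagging is the sign condition $L\geq 0$ required for the squaring step to preserve the inequality direction; outside this regime the stated estimate ceases to be informative, which is consistent with the large-$m$ setting in which the lemma is used.
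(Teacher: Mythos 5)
Your proof is correct and follows essentially the same route as the paper's: both reduce the spectral norm of the Hermitian operator $\P_{\mathcal{V}}(\I_n-\mu\A^*\A)\P_{\mathcal{V}}$ to the quadratic form $1-\mu\|\A\z\|_2^2$, invoke Lemma \ref{lemma_gordon} once on the single set $\mathcal{U}^B$ (with a union bound over the two tails, giving probability $1-2\mathrm{e}^{-\eta^2/2}$ uniformly over all $\mathcal{V}\in\mathcal{S}^B$), and use the upper Gordon bound together with $\mu\leq\left(b_m+w(\mathcal{U}^B\cap\Bbb S^{n-1})+\eta\right)^{-2}$ to keep the quadratic form nonnegative so that the supremum of the quadratic form equals the norm. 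Your explicit flagging of the sign condition $b_m-w(\mathcal{U}^B\cap\Bbb S^{n-1})-\eta\geq 0$ is a point the paper leaves implicit when squaring (\ref{Eq_gordon_lb}); beyond that, the difference (taking unit $\z\in\mathcal{V}$ directly rather than normalizing $\P_{\mathcal{V}}\z$) is cosmetic.
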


\begin{proof}
Note that if $\z \in \Bbb R^n$ has unit Euclidean norm, then $\P_{\mathcal{V}} \z \in \mathcal{V} \cap \Bbb B^{n}$. Since $\mathcal{V} \cap \Bbb B^{n} \subset \mathcal{U}^B \cap \Bbb B^n$, using Lemma \ref{lemma_gordon} we have
\begin{align}
\label{Eq_intro_gordon_lb}
\frac{\left \| \A \P_{\mathcal{V}} \z \right \|_2^2}{\left \| \P_{\mathcal{V}} \z \right \|_2^2} \geq \left [b_m - w(\mathcal{U}^B \cap \Bbb S^{n-1}) - \eta \right ]^2
\end{align}
and
\begin{align}
\label{Eq_intro_gordon_ub}
\frac{\left \| \A \P_{\mathcal{V}} \z \right \|_2^2}{\left \| \P_{\mathcal{V}} \z \right \|_2^2} \leq \left [b_m + w(\mathcal{U}^B \cap \Bbb S^{n-1}) + \eta \right ]^2
\end{align}
hold together with probability at least $1-2\mathrm{e}^{-\frac{\eta^2}{2}}$. Assuming that this event occurred, we have
\begin{align}
\label{Eq_intro_derivation}
1- \mu \left [b_m - w \left (\mathcal{U}^B \cap \Bbb S^{n-1} \right )-\eta \right ]^2 & \geq \underset{\z : \|\z\|_2=1}{\operatorname{sup}} \left (1 - \mu \frac{\left \| \A \P_{\mathcal{V}} \z \right \|_2^2}{\left \| \P_{\mathcal{V}} \z \right \|_2^2} \right ) \\ \nonumber
& = \underset{\z : \|\z\|_2=1}{\operatorname{sup}} \left \| \P_{\mathcal{V}} \z \right \|_2^{-2} \left ( \left \| \P_{\mathcal{V}} \z \right \|_2^2 - \mu \left \| \A \P_{\mathcal{V}} \z \right \|_2^2 \right ) \\ \nonumber
& \geq \underset{\z : \|\z\|_2=1}{\operatorname{sup}} \left ( \left \| \P_{\mathcal{V}} \z \right \|_2 - \mu \left \| \A \P_{\mathcal{V}} \z \right \|_2^2 \right ) \\ \nonumber
& = \underset{\z : \|\z\|_2=1}{\operatorname{sup}} \langle \z , \P_{\mathcal{V}}(\I_n - \mu \A^*\A)\P_{\mathcal{V}} \z \rangle \\ \nonumber
& = \| \P_{\mathcal{V}} (\I_n - \mu \A^*\A) \P_{\mathcal{V}} \|.
\end{align}
The first inequality uses (\ref{Eq_intro_gordon_lb}) and $\mu>0$. The second inequality uses $\left \| \P_{\mathcal{V}} \z \right \|_2 \leq \left \| \z \right \|_2 = 1$ and the fact that the expression in the brackets is nonnegative, due to (\ref{Eq_intro_gordon_ub}) and the upper bound on $\mu$. The last equality follows from the fact that $\P_{\mathcal{V}}(\I_n-\mu \A^*\A)\P_{\mathcal{V}}$ is Hermitian.
\end{proof}

%%%%%%%%%%%%%%%%%%%%%%%%%%%%%%%%%%%%%%%%%%%%%%%%%%%%%%%%%%%%%
%%%%%%%%%%%%%%%%%%%%%%%%%%%%%%%%%%%%%%%%%%%%%%%%%%%%%%%%%%%%%
%%
%% SECTION III: ALGORITHM AND RECOVERY GUARANTEES
%%
%%%%%%%%%%%%%%%%%%%%%%%%%%%%%%%%%%%%%%%%%%%%%%%%%%%%%%%%%%%%%
%%%%%%%%%%%%%%%%%%%%%%%%%%%%%%%%%%%%%%%%%%%%%%%%%%%%%%%%%%%%%

\section{Algorithm and Recovery Guarantees}
\label{Sec3}

The generalized CoSaMP (GCoSaMP) algorithm, which is a generalization of the sparsity-based algorithm proposed in \cite{needell2009cosamp}, is presented in Algorithm 1. The main difference compared to CoSaMP is that support selections are replaced by subspace selections. For a fixed value of $B$, we define a subspace selection problem as finding a subspace $\hat{\mathcal{V}} \in \mathcal{S}^B$, such that the orthogonal projection onto $\hat{\mathcal{V}}$ is the closest (in Euclidean distance) to the signal proxy/estimation $\tilde{\x}$, i.e.,
\begin{align}
\label{Eq_subspace selection}
\hat{\mathcal{V}} = \underset{\mathcal{V} \in \mathcal{S}^B}{\operatorname{argmin}} \| \tilde{\x} - \P_\mathcal{V} \tilde{\x} \|_2.
\end{align}
Although this problem may be demanding or intractable in general, for many models there exist efficient methods to obtain exact solutions, and for many others, approximation methods may still produce good results. In Section \ref{Sec4} we discuss several specific models.

\begin{algorithm}
\caption{Generalized CoSaMP (GCoSaMP)}
\vspace{2mm}
\kwInput{$\A, \y,$ stopping criterion, set of subspaces $\mathcal S$, where $\y = \A\x+\e$, such that $\e$ is an additive noise and $\x$ is an unknown signal satisfying $\x \in \mathcal{U}$.}
\kwOutput{$\hat{\x} \in \mathcal{U}$ an estimate for $\x$.}
\kwInitialize{$\r=\y, \x^0=0, t=0, \mathcal{V}^0=\varnothing$}
\While{stopping criterion not met}{
    $t = t+1$\;
    $\tilde{\v} = \A^*\r$\;
    $\mathcal{V}_{\Delta}^t = \underset{\mathcal{V} \in \mathcal{S}^2}{\operatorname{argmin}} \| \tilde{\v} - \P_\mathcal{V} \tilde{\v} \|_2$\;
    $\tilde{\mathcal{V}}^t = \mathcal{V}^{t-1} + \mathcal{V}_{\Delta}^t$\;
    $\tilde{\x}^t = \underset{\z}{\operatorname{argmin}} \| \y - \A\z \|_2 \,\,\, \operatorname{s.t.} \,\,\, \z \in \tilde{\mathcal{V}}^t$\;
    $\mathcal{V}^t = \underset{\mathcal{V} \in \mathcal S}{\operatorname{argmin}} \| \tilde{\x}^t - \P_{\mathcal{V}} \tilde{\x}^t \|_2$\;
    $\x^t = \P_{\mathcal{V}^{t}} \tilde{\x}^t$\;
    $\r = \y - \A \x^t$\;
}
%\kwOutput{$\hat{\x} = \x^t$}
$\hat{\x} = \x^t$\;
\end{algorithm}

Our main results are given in the following theorem and corollary.

\begin{theorem}
\label{theorem1}
Let $\y=\A\x+\e$, where $\x \in \mathcal U$ and $\A$ is an $m \times  n$ matrix with entries independently drawn from the standard normal distribution, then with the notation of Algorithm 1, we have
\begin{align}
\label{Eq_theorem}
\| \x^t - \x \|_2 \leq \frac{4 \rho_{2}(\eta)}{\sqrt{1-\rho_{1}^2(\eta)}} \| \x^{t-1} - \x \|_2 + \left ( \frac{4 \xi_{2}(\eta)}{\sqrt{1-\rho_{1}^2(\eta)}} + \frac{2 \xi_{1}(\eta)}{1-\rho_{1}(\eta)} \right ) \|\e\|_2,
\end{align}
holds for all $t$, with probability at least $1-6\mathrm{e}^{-\frac{\eta^2}{2}}$, where
\begin{align}
\label{Eq_theorem_rho1}
\rho_{1}(\eta) & \triangleq 1- \mu_1  \left [b_m - w \left (\mathcal U^{4} \cap \Bbb S^{n-1} \right )-\eta \right ]^2 \\
\label{Eq_theorem_xi1}
\xi_{1}(\eta) & \triangleq \mu_1 \left [ w \left ( \mathcal U^{3} \cap \Bbb S^{n-1} \right )+\eta \right ] \\
\label{Eq_theorem_rho2}
\rho_{2}(\eta) & \triangleq 1- \mu_2  \left [b_m - w \left (\mathcal U^{4} \cap \Bbb S^{n-1} \right )-\eta \right ]^2 \\
\label{Eq_theorem_xi2}
\xi_{2}(\eta) & \triangleq \mu_2 \left [ w \left ( \mathcal U^{4} \cap \Bbb S^{n-1} \right )+\eta \right ]
\end{align}
and $\mu_1$, $\mu_2$ and $\eta$ are positive parameters satisfying
\begin{align}
\mu_1 & \leq \left ( b_m+w(\mathcal U^{4} \cap \Bbb S^{n-1})+\eta \right )^{-2} \nonumber \\
\mu_2 & \leq \left ( b_m+w(\mathcal U^{4} \cap \Bbb S^{n-1})+\eta \right )^{-2}. \nonumber
\end{align}
The iterates converge if $m > 14.5^2 (w(\mathcal U^{4} \cap \Bbb S^{n-1})+\eta)^2+1$.
\end{theorem}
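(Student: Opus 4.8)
The plan is to mimic the classical three-stage CoSaMP analysis—identification, estimation, pruning—replacing every use of the restricted isometry property by the near-isometry of Lemma~\ref{lemma_intro} on the appropriate order-$B$ sums, plus one auxiliary noise bound. The structural fact that makes everything work (and makes the bound hold for all $t$ at once) is that Lemma~\ref{lemma_intro} holds uniformly over all of $\mathcal{U}^B$, i.e.\ simultaneously for every $\mathcal{V}\in\mathcal{S}^B$; thus a single good event controls every subspace the algorithm could select, and no union bound over iterations is needed. The orders are $\x\in\mathcal{V}_0\in\mathcal{S}$, $\x^{t-1}\in\mathcal{V}^{t-1}\in\mathcal{S}$, $\mathcal{V}_\Delta^t\in\mathcal{S}^2$, hence $\tilde{\mathcal{V}}^t=\mathcal{V}^{t-1}+\mathcal{V}_\Delta^t\in\mathcal{S}^3$ and $\tilde{\mathcal{V}}^t+\mathcal{V}_0\in\mathcal{S}^4$; this accounting is what forces $w(\mathcal{U}^3\cap\mathbb{S}^{n-1})$ into $\xi_1$ and $w(\mathcal{U}^4\cap\mathbb{S}^{n-1})$ into $\rho_1,\rho_2,\xi_2$. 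I would first record the auxiliary estimate $\mu\|\P_{\mathcal{V}}\A^*\e\|_2\le\mu[w(\mathcal{U}^B\cap\mathbb{S}^{n-1})+\eta]\|\e\|_2$ for $\mathcal{V}\in\mathcal{S}^B$, which holds with probability $1-\mathrm{e}^{-\eta^2/2}$ because $\A^*(\e/\|\e\|_2)$ is standard Gaussian in $\mathbb{R}^n$ and $\sup_{\z\in\mathcal{U}^B\cap\mathbb{S}^{n-1}}\langle\z,\cdot\rangle$ concentrates around its mean $w(\mathcal{U}^B\cap\mathbb{S}^{n-1})$; this produces the $\xi$ terms.

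Pruning is immediate: $\x\in\mathcal{V}_0$ is feasible for the minimisation defining $\mathcal{V}^t$, so $\|\tilde{\x}^t-\P_{\mathcal{V}^t}\tilde{\x}^t\|_2\le\|\tilde{\x}^t-\x\|_2$, whence $\|\x^t-\x\|_2\le 2\|\tilde{\x}^t-\x\|_2$. For estimation, set $\P=\P_{\tilde{\mathcal{V}}^t}$, $\Q=\I_n-\P$ and split $\tilde{\x}^t-\x=(\P\x-\tilde{\x}^t)-\Q\x$ orthogonally. Using the normal equations $\P\A^*(\y-\A\tilde{\x}^t)=0$ with $\y=\A\x+\e$, the in-subspace error $\w=\P\x-\tilde{\x}^t$ satisfies $\w=\P(\I_n-\mu_1\A^*\A)\P\w-\mu_1\P\A^*\A\Q\x-\mu_1\P\A^*\e$. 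I would bound the first term by $\rho_1\|\w\|_2$ (Lemma~\ref{lemma_intro} on $\tilde{\mathcal{V}}^t$), the cross term by $\rho_1\|\Q\x\|_2$ (rewriting $\mu_1\P\A^*\A\Q\x=-\P\P_{\mathcal{W}}(\I_n-\mu_1\A^*\A)\P_{\mathcal{W}}\Q\x$ via $\P\Q\x=0$, with $\mathcal{W}=\tilde{\mathcal{V}}^t+\mathcal{V}_0\in\mathcal{S}^4$), and the noise term by $\xi_1\|\e\|_2$; combining with $\|\tilde{\x}^t-\x\|_2^2=\|\w\|_2^2+\|\Q\x\|_2^2$ yields a bound of the form $\|\tilde{\x}^t-\x\|_2\le(1-\rho_1^2)^{-1/2}\|\Q\x\|_2+\tfrac{\xi_1}{1-\rho_1}\|\e\|_2$.

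Identification bounds $\|\Q\x\|_2$, which equals $\|\Q\d\|_2$ with $\d=\x-\x^{t-1}$ since $\x^{t-1}\in\mathcal{V}^{t-1}\subseteq\tilde{\mathcal{V}}^t$, and satisfies $\|\Q\d\|_2\le\|\Q_{\mathcal{V}_\Delta^t}\d\|_2$. I would compare the selected $\mathcal{V}_\Delta^t$ with the order-2 subspace $\mathcal{V}_0+\mathcal{V}^{t-1}$ containing $\d$: all relevant vectors then lie in $\mathcal{Y}=\mathcal{V}_\Delta^t+\mathcal{V}^{t-1}+\mathcal{V}_0\in\mathcal{S}^4$. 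Projecting the proxy $\tilde{\v}=\A^*\A\d+\A^*\e$ onto $\mathcal{Y}$, invoking optimality $\|\Q_{\mathcal{V}_\Delta^t}\tilde{\v}\|_2\le\|\Q_{\mathcal{V}_0+\mathcal{V}^{t-1}}\tilde{\v}\|_2$, and using Lemma~\ref{lemma_intro} with $\mu_2$ on $\mathcal{Y}$ to replace $\mu_2\A^*\A$ by $\I_n$ up to $\rho_2$ (the noise absorbed via $\xi_2$), gives $\|\Q\x\|_2\le 2\rho_2\|\d\|_2+2\xi_2\|\e\|_2$. Chaining the three steps reproduces exactly the stated recursion. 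The probability $1-6\mathrm{e}^{-\eta^2/2}$ follows from a union bound over the constantly many events invoked—the two-sided near-isometry on $\mathcal{U}^4$ ($2\mathrm{e}^{-\eta^2/2}$), reused for $\rho_1,\rho_2$ and the cross term, and the noise estimates at orders $3$ and $4$—each valid, by uniformity, for every admissible subspace and hence for all $t$ at once.

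For the convergence clause I would take $\mu_1=\mu_2$ at the maximal value $(b_m+w(\mathcal{U}^4\cap\mathbb{S}^{n-1})+\eta)^{-2}$, so $\rho_1=\rho_2=\rho$ and the contraction factor $4\rho/\sqrt{1-\rho^2}$ is $<1$ exactly when $\rho<1/\sqrt{17}$; writing $\rho=1-(a/b^+)^2$ with $a=b_m-w(\mathcal{U}^4\cap\mathbb{S}^{n-1})-\eta$ and $b^+=b_m+w(\mathcal{U}^4\cap\mathbb{S}^{n-1})+\eta$ turns this into a lower bound on $b_m$ relative to $w(\mathcal{U}^4\cap\mathbb{S}^{n-1})+\eta$, and the elementary bound $b_m\ge\sqrt{m-1}$ (rounding the resulting $14.42$ up to $14.5$) gives the threshold $m>14.5^2(w(\mathcal{U}^4\cap\mathbb{S}^{n-1})+\eta)^2+1$. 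I expect the cross-terms in estimation and identification to be the main obstacle: keeping the in-subspace/tail split tight enough to reach the sharp $(1-\rho_1^2)^{-1/2}$ constant rather than a cruder one, and reducing the identification comparison to the single order-4 space $\mathcal{Y}$ so that one application of Lemma~\ref{lemma_intro} controls both the chosen and the true subspace and the factor $2\rho_2$ emerges cleanly, are the delicate steps; by contrast, propagating the fixed set of high-probability events uniformly across all iterations is conceptually central but technically automatic once the mean-width bounds are in hand.
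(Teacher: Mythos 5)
Your proposal follows essentially the same route as the paper's proof: the same three-stage decomposition into pruning, estimation and identification (the paper's Lemmas \ref{lemma2}, \ref{lemma1} and \ref{lemma3}), the same order bookkeeping placing $\tilde{\mathcal{V}}^t\in\mathcal{S}^3$ and both $\tilde{\mathcal{V}}^t+\mathcal{V}_0$ and $\mathcal{V}_{\Delta}^t+\mathcal{V}^{t-1}+\mathcal{V}_0$ in $\mathcal{S}^4$, the same Gaussian concentration bound on $\A^*\e/\|\e\|_2$ producing $\xi_1$ at order $3$ and $\xi_2$ at order $4$, the same observation that the uniformity of Lemma \ref{lemma_intro} over $\mathcal{S}^B$ makes the bound hold for all $t$ on one event (your count of the events is in fact slightly sharper than the paper's $3+3$ union bound, which is harmless since $1-4\mathrm{e}^{-\eta^2/2}\geq 1-6\mathrm{e}^{-\eta^2/2}$), and the same convergence calculation via $16\rho_2^2+\rho_1^2<1$, i.e.\ $\rho<1/\sqrt{17}$, a lower bound on $b_m$, and the constant $14.5$.

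One quantitative point needs exactly the repair you yourself flagged as the delicate step. As literally written, bounding the self term by $\rho_1\|\w\|_2$ and the cross term by $\rho_1\|\Q_{\tilde{\mathcal{V}}^t}\x\|_2$ \emph{separately} gives $\|\w\|_2\leq\rho_1\left(\|\w\|_2+\|\Q_{\tilde{\mathcal{V}}^t}\x\|_2\right)+\xi_1\|\e\|_2$, hence the coefficient $\sqrt{\rho_1^2+(1-\rho_1)^2}/(1-\rho_1)$ on $\|\Q_{\tilde{\mathcal{V}}^t}\x\|_2$, which is strictly larger than the claimed $(1-\rho_1^2)^{-1/2}$ whenever $\rho_1>0$; so the two-term triangle bound does not reach the theorem's constants. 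The fix is already contained in your own algebra: since $\P_{\tilde{\mathcal{V}}^t}\Q_{\tilde{\mathcal{V}}^t}\x=\0$ and $\w+\Q_{\tilde{\mathcal{V}}^t}\x=-(\tilde{\x}^t-\x)=-\z$, your rewriting via $\mathcal{W}=\tilde{\mathcal{V}}^t+\mathcal{V}_0$ merges the self and cross terms into the single term $-\P_{\tilde{\mathcal{V}}^t}\P_{\mathcal{W}}(\I_n-\mu_1\A^*\A)\P_{\mathcal{W}}\z$, yielding $\|\P_{\tilde{\mathcal{V}}^t}\z\|_2\leq\rho_1\|\z\|_2+\xi_1\|\e\|_2$, which is precisely the paper's (\ref{Eq_lemma1_Pt_final}) — note the bound must be against the \emph{full} error $\|\z\|_2$, not against $\|\Q_{\tilde{\mathcal{V}}^t}\x\|_2$. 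Plugging this into $\|\z\|_2^2=\|\Q_{\tilde{\mathcal{V}}^t}\z\|_2^2+\|\P_{\tilde{\mathcal{V}}^t}\z\|_2^2$ leaves $\|\z\|_2$ on both sides, so the final step is to solve the resulting second-order polynomial inequality in $\|\z\|_2$ (taking the larger root and using $\sqrt{a+b}\leq\sqrt{a}+\sqrt{b}$), which is how the paper obtains the sharp constants $1/\sqrt{1-\rho_1^2}$ and $\xi_1/(1-\rho_1)$ in Lemma \ref{lemma1}. With that one modification — combine before bounding, then the quadratic-root argument — your outline coincides with the paper's proof, including the identification step, where your comparison of $\mathcal{V}_{\Delta}^t$ against $\mathcal{V}_0+\mathcal{V}^{t-1}$ inside the order-$4$ space and the resulting factor $2\rho_2$ match the paper's Lemma \ref{lemma3} exactly.
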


\begin{proof}
We divide the proof into several lemmas.

\begin{lemma}
\label{lemma2}
With the notation of Algorithm 1, we have
\begin{align}
\label{Eq_lemma2}
\| \x^t - \x \|_2 \leq 2 \| \tilde{\x}^t - \x \|_2.
\end{align}
\end{lemma}

\begin{lemma}
\label{lemma1}
With the notation of Algorithm 1 and Theorem \ref{theorem1}, we have
\begin{align}
\label{Eq_lemma1}
\| \tilde{\x}^t - \x \|_2 \leq \frac{1}{\sqrt{1-\rho_{1}^2(\eta)}} \| \Q_{\tilde{\mathcal{V}}^t}(\tilde{\x}^t-\x) \|_2 + \frac{\xi_{1}(\eta)}{1-\rho_{1}(\eta)} \| \e \|_2
\end{align}
holds for all $t$, with probability at least $1-3\mathrm{e}^{-\frac{\eta^2}{2}}$.
\end{lemma}

\begin{lemma}
\label{lemma3}
With the notation of Algorithm 1 and Theorem \ref{theorem1}, we have
\begin{align}
\label{Eq_lemma3}
\| \Q_{\tilde{\mathcal{V}}^t}(\tilde{\x}^t-\x) \|_2 \leq 2 \rho_{2}(\eta) \| \x - \x^{t-1} \|_2 + 2 \xi_{2}(\eta) \| \e \|_2
\end{align}
holds for all $t$, with probability at least $1-3\mathrm{e}^{-\frac{\eta^2}{2}}$.
\end{lemma}

The proofs of Lemmas \ref{lemma2}, \ref{lemma1} and \ref{lemma3} appear in Appendices A, B and C, respectively.
Combining the above three lemmas, we immediately get (\ref{Eq_theorem}). 

Note that  $\mu_1$ and $\mu_2$ are two arbitrary parameters (with restricted permitted range) that can be used to optimize the bound in (\ref{Eq_theorem}). Increasing $\mu_1$ leads to lower value of $\rho_1(\eta)$ and higher value of $\xi_1(\eta)$, and the same goes for $\mu_2$. However, in order to ensure that the iterates converge, it is enough to consider only the first term in (\ref{Eq_theorem}) and require that $\frac{4 \rho_{2}(\eta)}{\sqrt{1-\rho_{1}^2(\eta)}} <1$, which is equivalent to  $16\rho_{2}^2(\eta)+\rho_{1}^2(\eta)<1$.

To simplify the notations we define $m_0 \triangleq (w(\mathcal U^{4} \cap \Bbb S^{n-1})+\eta)^2$. Following the calculation from \cite{chandrasekaran2012convex}, we have
\begin{align}
\label{Eq_theorem_bm}
b_m \geq \frac{m}{\sqrt{m+1}} = \sqrt{\frac{m}{1+\frac{1}{m}}} \geq \sqrt{\frac{ 14.5^2 m_0 +1}{1+\frac{1}{m}}} > \sqrt{\frac{ 14.5^2 m_0 + \frac{1}{m}14.5^2 m_0}{1+\frac{1}{m}}} =14.5 \sqrt{m_0},
\end{align}
where in the last two inequalities we used $m \geq 14.5^2 m_0 +1$.
Setting both $\mu_1$ and $\mu_2$ at their highest permitted value $\left ( b_m+ \sqrt{m_0} \right )^{-2}$, we have
\begin{align}
\label{Eq_theorem_convergence}
16\rho_{2}^2(\eta)+\rho_{1}^2(\eta) & = 17 \left ( 1 - \frac{(b_m-\sqrt{m_0})^2}{(b_m+\sqrt{m_0})^2} \right )^2  < 1,
\end{align}
where the last inequality follows from (\ref{Eq_theorem_bm}).
\end{proof}
This leads us to the following corollary.

\begin{corollary}
Assuming the conditions of Theorem \ref{theorem1}, define $m_0 \triangleq (w(\mathcal U^{4} \cap \Bbb S^{n-1})+\eta)^2$ and assume $m \geq 14.5^2 m_0 +1$, then with the notation of Algorithm 1, we have
\begin{align}
\label{Eq_corollary}
\| \x^t - \x \|_2 \leq \rho^t(m) \| \x^{0} - \x \|_2 + \frac{\xi(m)}{1-\rho(m)} \|\e\|_2,
\end{align}
holds with probability at least $1-6\mathrm{e}^{-\frac{\eta^2}{2}}$, where
\begin{align}
\label{Eq_corollary_rho}
\rho(m) & \triangleq \operatorname{min} \left \{ 1, 4\frac{(\sqrt{m}+\sqrt{m_0})^2-(\frac{m}{\sqrt{m+1}}-\sqrt{m_0})^2}{(\frac{m}{\sqrt{m+1}} - \sqrt{m_0})(\sqrt{m}+\sqrt{m_0})} \right \} \approx \operatorname{min} \left \{ 1, \frac{16}{1 - \frac{m_0}{m}}\sqrt{\frac{m_0}{m}} \right \}, \\
\label{Eq_corollary_xi}
\xi(m) & \triangleq \frac{2 \sqrt{m_0} \sqrt{\frac{m+1}{m}}}{\frac{m^2}{m+1} - m_0} \left ( 2 + \frac{\sqrt{m}+\sqrt{m_0}}{\frac{m}{\sqrt{m+1}}-\sqrt{m_0}} \right ) \approx \frac{2 \sqrt{m_0}}{m-m_0} \frac{3\sqrt{m}-\sqrt{m_0}}{\sqrt{m}-\sqrt{m_0}}.
\end{align}
\end{corollary}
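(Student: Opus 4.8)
The plan is to obtain the corollary entirely from the one-step recursion established in Theorem \ref{theorem1}: first unroll that recursion into a geometric series, and then substitute the distinguished choices of the free parameters $\mu_1,\mu_2$ (their largest admissible values) together with the two-sided estimate $\frac{m}{\sqrt{m+1}}\le b_m\le\sqrt m$ to eliminate $b_m$ and produce the closed forms $\rho(m)$ and $\xi(m)$. A key point is that no probability is lost in the iteration: the bound (\ref{Eq_theorem}) holds simultaneously for all $t$ on a single good event for the fixed matrix $\A$ (the Gordon-type inequalities of Lemma \ref{lemma_gordon} and the attendant concentration concern $\A$ itself and are not resampled at each iteration), so the final statement inherits the same probability $1-6\mathrm{e}^{-\eta^2/2}$.

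Writing $\delta_t:=\|\x^t-\x\|_2$, Theorem \ref{theorem1} reads $\delta_t\le\rho\,\delta_{t-1}+\kappa\,\|\e\|_2$ with $\rho=\frac{4\rho_2(\eta)}{\sqrt{1-\rho_1^2(\eta)}}$ and $\kappa=\frac{4\xi_2(\eta)}{\sqrt{1-\rho_1^2(\eta)}}+\frac{2\xi_1(\eta)}{1-\rho_1(\eta)}$. Under $m\ge 14.5^2 m_0+1$ the convergence analysis already carried out in the proof of Theorem \ref{theorem1} (namely $16\rho_2^2(\eta)+\rho_1^2(\eta)<1$, which is exactly $\rho<1$) guarantees $\rho\in[0,1)$. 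Iterating the recursion then gives $\delta_t\le\rho^t\delta_0+\kappa(1+\rho+\dots+\rho^{t-1})\|\e\|_2\le\rho^t\delta_0+\frac{\kappa}{1-\rho}\|\e\|_2$, which is precisely (\ref{Eq_corollary}) once we identify $\rho(m)$ with the substituted value of $\rho$ and $\xi(m)$ with the substituted value of $\kappa$.

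It remains to carry out the substitution. Setting $\mu_1=\mu_2=(b_m+\sqrt{m_0})^{-2}$ makes $\rho_1(\eta)=\rho_2(\eta)=1-(b_m-\sqrt{m_0})^2/(b_m+\sqrt{m_0})^2$, gives $\xi_2(\eta)=\mu_2\sqrt{m_0}$ exactly, and $\xi_1(\eta)\le\mu_1\sqrt{m_0}$ since $w(\mathcal U^{3}\cap\Bbb S^{n-1})\le w(\mathcal U^{4}\cap\Bbb S^{n-1})$. Using $\sqrt{1-\rho_1^2(\eta)}\ge\sqrt{1-\rho_1(\eta)}$ to clear the nested root, a short computation collapses the two coefficients into the single-variable forms $\rho\le\frac{16\,b_m\sqrt{m_0}}{b_m^2-m_0}$ and $\kappa\le\frac{2\sqrt{m_0}}{b_m^2-m_0}\bigl(2+\frac{b_m+\sqrt{m_0}}{b_m-\sqrt{m_0}}\bigr)$. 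Discharging $b_m$ with $\frac{m}{\sqrt{m+1}}\le b_m\le\sqrt m$ (the lower bound is (\ref{Eq_theorem_bm}); the upper bound is Jensen, $b_m\le\sqrt{\Ex\|\g\|_2^2}=\sqrt m$) then yields (\ref{Eq_corollary_rho}) and (\ref{Eq_corollary_xi}); the displayed $\approx$ forms are the informal limits $b_m\to\sqrt m$ and $m_0/m\to 0$.

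The main obstacle is this last bookkeeping, which is asymmetric between the two coefficients. For $\xi(m)$ it is routine factor-wise monotonicity: enlarge $b_m+\sqrt{m_0}$ to $\sqrt m+\sqrt{m_0}$, shrink $b_m-\sqrt{m_0}$ down to $\frac{m}{\sqrt{m+1}}-\sqrt{m_0}$, bound $b_m^2-m_0$ below by $\frac{m^2}{m+1}-m_0$, and absorb the harmless slack factor $\sqrt{(m+1)/m}\ge 1$. The delicate point is (\ref{Eq_corollary_rho}): a naive factor-wise substitution fails there, because the factor $\sqrt m+\sqrt{m_0}$ sits in the denominator and would \emph{decrease} the bound. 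The correct route is to keep $\rho$ in the combined form $\frac{16\,b_m\sqrt{m_0}}{b_m^2-m_0}$, which is decreasing in $b_m$ on $b_m>\sqrt{m_0}$, substitute $b_m\ge\frac{m}{\sqrt{m+1}}$, and only then recognize the displayed expression as a still looser upper bound (this reduces to monotonicity of $x\mapsto x-(\tfrac{m}{\sqrt{m+1}}-\sqrt{m_0})^2/x$ evaluated at $x=\sqrt m+\sqrt{m_0}\ge\frac{m}{\sqrt{m+1}}+\sqrt{m_0}$). Positivity of $b_m-\sqrt{m_0}$, needed throughout and for the division by $1-\rho$, follows from $m\ge 14.5^2 m_0+1$, which forces $b_m\ge 14.5\sqrt{m_0}$ via (\ref{Eq_theorem_bm}); everything else is the elementary geometric-series summation indicated above.
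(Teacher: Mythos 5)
Your proposal is correct and takes essentially the same route as the paper's proof: unroll the recursion of Theorem \ref{theorem1} on the single good event (so the probability $1-6\mathrm{e}^{-\eta^2/2}$ carries over unchanged), set $\mu_1=\mu_2=(b_m+\sqrt{m_0})^{-2}$, use $\xi_1\le\xi_2$, $1-\rho_1^2\ge 1-\rho_1$, and discharge $b_m$ via $\frac{m}{\sqrt{m+1}}\le b_m\le\sqrt{m}$. The only (immaterial) difference is the bookkeeping for $\rho(m)$: the paper substitutes the two bounds on $b_m$ directly into the ratio, $\frac{b_m-\sqrt{m_0}}{b_m+\sqrt{m_0}}\ge\phi\triangleq\frac{\frac{m}{\sqrt{m+1}}-\sqrt{m_0}}{\sqrt{m}+\sqrt{m_0}}$, and reads off $\rho(m)=4(\phi^{-1}-\phi)$ in one step, which sidesteps the extra monotonicity argument you need after passing through $\frac{16\,b_m\sqrt{m_0}}{b_m^2-m_0}$ --- your version is equally valid, and your $\xi$-bound (before absorbing the slack factor $\sqrt{(m+1)/m}\ge 1$) is in fact marginally tighter than the displayed $\xi(m)$.
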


\begin{proof}
From Theorem \ref{theorem1}, using recursion we get
\begin{align}
\label{Eq_cor_recur}
\| \x^t - \x \|_2 \, \leq \, \tilde{\rho}^t \| \x^{0} - \x \|_2 + \frac{1-\tilde{\rho}^t}{1-\tilde{\rho}} \tilde{\xi} \|\e\|_2 \, \leq \, \tilde{\rho}^t \| \x^{0} - \x \|_2 + \frac{\tilde{\xi}}{1-\tilde{\rho}} \|\e\|_2,
\end{align}
where
\begin{align}
\label{Eq_cor_rho_def}
\tilde{\rho} & \triangleq \frac{4 \rho_{2}}{\sqrt{1-\rho_{1}^2}}, \\
\label{Eq_cor_xi_def}
\tilde{\xi} & \triangleq \frac{4 \xi_{2}}{\sqrt{1-\rho_{1}^2}} + \frac{2 \xi_{1}}{1-\rho_{1}},
\end{align}
and for brevity we omitted the dependencies on $\eta$. Similarly to the second part of the proof of Theorem \ref{theorem1}, we set both $\mu_1$ and $\mu_2$ at their largest permitted value $\left ( b_m+ \sqrt{m_0} \right )^{-2}$. We turn to bounding the basic terms in (\ref{Eq_cor_rho_def}) and (\ref{Eq_cor_xi_def}). We repeatedly use the fact $\sqrt{m} \geq b_m \geq \frac{m}{\sqrt{m+1}} > \sqrt{m_0}$. First, note that
\begin{align}
\label{Eq_cor_rho1_2_lb}
\rho_1 = \rho_2 &= 1 - \left ( \frac{b_m-\sqrt{m_0}}{b_m+\sqrt{m_0}} \right )^2 \leq 1 - \left ( \frac{\frac{m}{\sqrt{m+1}}-\sqrt{m_0}}{\sqrt{m}+\sqrt{m_0}} \right )^2 \triangleq 1-\phi^2, \\
\xi_1 &= \frac{w ( \mathcal U^{3} \cap \Bbb S^{n-1}) +\eta}{(b_m+\sqrt{m_0})^2} \leq \frac{\sqrt{m_0}}{(b_m+\sqrt{m_0})^2} \leq \frac{\sqrt{m_0}}{(\frac{m}{\sqrt{m+1}}+\sqrt{m_0})^2}, \\
\xi_2 &= \frac{\sqrt{m_0}}{(b_m+\sqrt{m_0})^2} \leq \frac{\sqrt{m_0}}{(\frac{m}{\sqrt{m+1}}+\sqrt{m_0})^2}.
\end{align}
Therefore,
\begin{align}
1-\rho_1^2 & \geq 1-\rho_1 \geq \phi^2,  \\
\sqrt{1-\rho_1^2} & \geq \phi, 
\end{align}
and we get
\begin{align}
\label{Eq_cor_tilde_rho}
\tilde{\rho} & \leq \frac{4(1-\phi^2)}{\phi} = 4(\phi^{-1}-\phi) = 4\frac{(\sqrt{m}+\sqrt{m_0})^2-(\frac{m}{\sqrt{m+1}}-\sqrt{m_0})^2}{(\frac{m}{\sqrt{m+1}} - \sqrt{m_0})(\sqrt{m}+\sqrt{m_0})}, \\
\tilde{\xi} & \leq \frac{4 \sqrt{m_0}/\phi}{(\frac{m}{\sqrt{m+1}}+\sqrt{m_0})^2} + \frac{2 \sqrt{m_0}/\phi^2}{(\frac{m}{\sqrt{m+1}}+\sqrt{m_0})^2} \\ \nonumber
& = \frac{2 \sqrt{m_0}(\sqrt{m}+\sqrt{m_0})}{(\frac{m}{\sqrt{m+1}}+\sqrt{m_0})^2(\frac{m}{\sqrt{m+1}}-\sqrt{m_0})}\left ( 2 + \frac{\sqrt{m}+\sqrt{m_0}}{\frac{m}{\sqrt{m+1}}-\sqrt{m_0}} \right ) \\ \nonumber
& \leq \frac{2 \sqrt{m_0} \sqrt{\frac{m+1}{m}} (\frac{m}{\sqrt{m+1}}+ \sqrt{m_0})}{(\frac{m}{\sqrt{m+1}}+\sqrt{m_0})^2(\frac{m}{\sqrt{m+1}}-\sqrt{m_0})}\left ( 2 + \frac{\sqrt{m}+\sqrt{m_0}}{\frac{m}{\sqrt{m+1}}-\sqrt{m_0}} \right ) \\ \nonumber
& = \xi(m).
\end{align}
Using (\ref{Eq_cor_recur}) and the last two inequalities we get (\ref{Eq_corollary}).

\end{proof}

For $m \gg m_0$, it is easy to see that $\rho(m)$ and $\xi(m)$ behave like $16\sqrt{\frac{m_0}{m}}$ and $6\frac{\sqrt{m_0}}{m}$, respectively. Therefore, the noise coefficient in (\ref{Eq_corollary}) also behaves like $6\frac{\sqrt{m_0}}{m}$. Assuming that the noise characteristics are the same at all measurements, it follows that $\|\e\|_2=\mathcal{O}(\sqrt{m})$. Thus, we conclude that the effect of the noise tends to zero as the number of measurements grows. However, please note that since our derivation assumes that $\e$ does not depend on $\A$ (see the remarks before (\ref{Eq_lemma1_e_term}) and (\ref{Eq_lemma3_main2_2})), the bound does not cover the case of adversarial noise $\e=-c\A\x$.

\textbf{Discussion.} Let us compare our bound with previous results. As was mentioned in Section \ref{Sec2}, by choosing $\mathcal S$ as the set of $\binom{n}{k}$ subspaces, where each subspace is spanned by a different selection of $k$ columns of the identity matrix $\I_n$, we get $\mathcal U = \{ \x \in \Bbb R^n : \|\x\|_0 \leq k \}$. That is to say, $\mathcal U \equiv  \mathcal{U}_{k\text{-sparse}}$ is the set of $k$-sparse vectors in $\Bbb R^n$. Also, the set $\mathcal S^B$ contains all the $\binom{n}{Bk}$ subspaces spanned by different selections of $Bk$ columns of $\I_n$ (and other subspaces of lower dimensions, which are contained in those $\binom{n}{Bk}$ subspaces). Therefore, in this case GCoSaMP reduces to the original sparsity-based CoSaMP \cite{needell2009cosamp}.
In order to use the theoretical results above, the evaluation of the quantity $w(\mathcal U^4_{k\text{-sparse}} \cap \Bbb S^{n-1})$ is required. Conveniently, since $\mathcal U^B_{k\text{-sparse}} = \{ \x \in \Bbb R^n : \|\x\|_0 \leq Bk \}$ we can use the following result from \cite{plan2013robust}
\begin{align}
\label{Eq_w_sparse_case}
cBk\,\mathrm{log}(2n/Bk) \leq w^2(\mathcal{U}^B_{k\text{-sparse}} \cap \Bbb S^{n-1}) \leq CBk\,\mathrm{log}(2n/Bk),
\end{align}
where $c$ and $C$ are two positive constants. Before we present the reconstruction error bound that was obtained in \cite{needell2009cosamp}, recall that we say that $\tilde{\A}$ satisfies the RIP of order $k$ with constant $\delta_k \in [0,1)$ if
\begin{align}
\label{Eq_rip_def}
(1-\delta_k)\|\x\|_2^2 \leq \|\tilde{\A}\x\|_2^2 \leq (1+\delta_k)\|\x\|_2^2
\end{align}
holds for all $\x$ satisfying $\|\x\|_0 \leq k$. In \cite[Thm 4.1]{needell2009cosamp} it was shown that given the measurements $\y=\tilde{\A}\x+\tilde{\e}$, where $\x$ is a $k$-sparse vector and $\tilde{\A}$ satisfies the RIP of order $4k$ with $\delta_{4k} \leq 0.1$, the solution of the $t$-th iteration $\x^t$ satisfies
\begin{align}
\label{Eq_cosamp_bound}
\| \x^t - \x \|_2 \leq \rho_{[25]}^t(m) \| \x^{0} - \x \|_2 + \frac{\xi_{[25]}(m)}{1-\rho_{[25]}(m)} \|\tilde{\e}\|_2,
\end{align}
where
\begin{align}
\label{Eq_cosamp_bound_rho_def}
\rho_{[25]} & \triangleq 2 \left ( 1+\frac{\delta_{4k}}{1-\delta_{3k}} \right ) \frac{\delta_{2k}+\delta_{4k}}{1-\delta_{2k}}, \\
\label{Eq_cosamp_bound_xi_def}
\xi_{[25]} & \triangleq 2 \left [ \frac{1}{\sqrt{1-\delta_{3k}}} + \left ( 1+\frac{\delta_{4k}}{1-\delta_{3k}} \right ) \frac{2\sqrt{1+\delta_{2k}}}{1-\delta_{2k}} \right ].
\end{align}
Note that we present the bound without substituting $\delta_{2k} \leq \delta_{3k} \leq \delta_{4k} \leq 0.1$, as done in \cite{needell2009cosamp}. Note also that the columns of $\tilde{\A}$ are assumed to be normalized in order to have $\delta_1=0$, while we did not normalize the columns of $\A$ to have unit variance (by dividing it by $\sqrt{m}$). Therefore, in order to have the same averaged noise level as in our model, we have $\|\tilde{\e}\|_2=\|\e\|_2/\sqrt{m}$, and if we also assume that the noise characteristics are the same at all measurements, it follows that $\|\tilde{\e}\|_2=\mathcal{O}(1)$. Now, it is easy to see that for fixed $k,n$, even if $m$ grows without bound such that the RIP constants tend to zero \cite{blumensath2009sampling}, the effect of the noise does not vanish in (\ref{Eq_cosamp_bound}), contrary to our result in (\ref{Eq_corollary}). The last remark is not surprising, since (\ref{Eq_cosamp_bound}) covers the case of adversarial noise. However, it emphasizes that our bound (\ref{Eq_corollary}) is more useful if the denoising capabilities of the algorithm are of interest (and not only its stability).

We turn to comparing (\ref{Eq_corollary}) with the bound obtained in \cite{giryes2012rip} for CoSaMP under the assumption of Gaussian noise. In \cite[Thm 2.3]{giryes2012rip} it was shown that given the measurements $\y=\tilde{\A}\x+\tilde{\e}$, where $\x$ is a $k$-sparse vector, $\tilde{\A}$ satisfies the RIP of order $4k$ with $\delta_{4k} \leq 0.1$ and $\tilde{\e} \sim \mathcal{N}(\0,\tilde{\sigma}^2 \I_m)$, after $t \geq t^*$ iterations we have that
\begin{align}
\label{Eq_cosamp_bound_gaussian}
\| \x^t - \x \|_2 \leq \tilde{C} \sqrt{2(1+a) k \, \mathrm{log}n} \cdot \tilde{\sigma}
\end{align}
holds with probability exceeding $1-(\sqrt{\pi(1+a)\mathrm{log}n}\cdot n^a)^{-1}$. The constant $\tilde{C}$ and the value of $t^*$ are given in \cite{giryes2012rip}.
For simplicity, we examine our bound in (\ref{Eq_corollary}) only for $m \gg m_0$. In this case, as we mentioned above, the noise coefficient behaves like $6\frac{\sqrt{m_0}}{m}$ and the first term can be as small as we desire for large enough $t$. Therefore, using (\ref{Eq_w_sparse_case}) and the definition of $m_0$, our bound in (\ref{Eq_corollary}) leads to
\begin{align}
\label{Eq_cosamp_bound_our}
\| \x^t - \x \|_2 \leq 6(\sqrt{4Ck\,\mathrm{log}(n/2k)}+\eta) \frac{\|\e\|_2}{m}
\end{align}
with probability exceeding $1-6\mathrm{e}^{-\frac{\eta^2}{2}}$. Once again, since we did not normalize the columns of $\A$, and assuming that the noise is stationary, we have that $\frac{\|\e\|_2}{m}$ is equivalent to $\tilde{\sigma}$.
Both bounds have quite similar dependencies on $k, n$, and they both agree that CoSaMP can recover signals with an effective reduction of the additive noise.
However, the bound in \cite{giryes2012rip} is more general in terms of the distribution of the measurement matrix, and the bound obtained here is more general in terms of the noise, which can be drawn from any distribution, or even exhibit nonstationarity. The only restriction on the noise is that it does not depend on $\A$. As a supporting example, in Section \ref{Sec5a} we conduct experiments with noise drawn from Laplace distribution.
We also stress that the bound obtained here can be applied to signal models beyond sparsity.
Lastly, we note that our assumptions on the measurement matrix and noise vector are similar to those in \cite{oymak2015sharp}, where bounds on the reconstruction error of projected gradient descent algorithm were developed.

%%%%%%%%%%%%%%%%%%%%%%%%%%%%%%%%%%%%%%%%%%%%%%%%%%%%%%%%%%%%%
%%%%%%%%%%%%%%%%%%%%%%%%%%%%%%%%%%%%%%%%%%%%%%%%%%%%%%%%%%%%%
%%
%% SECTION IV: Discussion
%%
%%%%%%%%%%%%%%%%%%%%%%%%%%%%%%%%%%%%%%%%%%%%%%%%%%%%%%%%%%%%%
%%%%%%%%%%%%%%%%%%%%%%%%%%%%%%%%%%%%%%%%%%%%%%%%%%%%%%%%%%%%%

\section{Examples}
\label{Sec4}

The results in Section \ref{Sec3} require the evaluation of the quantity $w(\mathcal{U}^4 \cap \Bbb S^{n-1})$, which depends on the subspaces included in the set $\mathcal S$ (recall the definitions in (\ref{Eq_S_B_def}) and (\ref{Eq_U_def})).
However, note that having an upper bound on $w(\mathcal U^4 \cap \Bbb S^{n-1})$ is enough for bounding the error in (\ref{Eq_corollary}). This is true since plugging an upper bound on $m_0$ in (\ref{Eq_corollary_rho}) and (\ref{Eq_corollary_xi}) yields upper bounds on $\rho(m)$ and $\xi(m)$, that can be used to upper bound the error in (\ref{Eq_corollary}). The following lemma may facilitate the evaluation of a bound on the Gaussian mean width in some cases.

\begin{lemma}
\label{lemma_w_combined_gen}
Let the union of subspaces $\mathcal{U}$ be the sum of $N$ unions of subspaces $\{\mathcal{U}_i\}_{i=1}^N$, i.e., $\mathcal{U} = \sum \limits_{i=1}^{N} \mathcal{U}_i$. Then, we have
\begin{align}
\label{Eq_w_combined_gen}
w(\mathcal{U} \cap \Bbb S^{n-1}) \leq \sum \limits_{i=1}^{N} w(\mathcal{U}_i \cap \Bbb S^{n-1}).
\end{align}
\end{lemma}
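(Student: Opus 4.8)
The plan is to translate both sides into statements about orthogonal projections and then to bound the width of the sum by the sum of the widths. Writing each summand as $\mathcal{U}_i=\bigcup_{\mathcal{V}\in\mathcal{S}_i}\mathcal{V}$, the hypothesis $\mathcal{U}=\sum_i\mathcal{U}_i$ means that the subspaces composing $\mathcal{U}$ are exactly the sums $\mathcal{V}_1+\cdots+\mathcal{V}_N$ with $\mathcal{V}_i\in\mathcal{S}_i$. Using the elementary identity $\operatorname{sup}_{\z\in\mathcal{V}\cap\Bbb S^{n-1}}\langle\g,\z\rangle=\|\P_{\mathcal{V}}\g\|_2$, valid for any subspace $\mathcal{V}$, the quantities inside the two expectations become
\begin{align}
\operatorname{sup}_{\z\in\mathcal{U}\cap\Bbb S^{n-1}}\langle\g,\z\rangle=\operatorname{sup}_{\mathcal{V}_i\in\mathcal{S}_i}\|\P_{\mathcal{V}_1+\cdots+\mathcal{V}_N}\g\|_2,\qquad \operatorname{sup}_{\z\in\mathcal{U}_i\cap\Bbb S^{n-1}}\langle\g,\z\rangle=\operatorname{sup}_{\mathcal{V}\in\mathcal{S}_i}\|\P_{\mathcal{V}}\g\|_2. \nonumber
\end{align}
Taking expectations, the claim reduces to $\Ex_{\g}\operatorname{sup}_{\mathcal{V}_i}\|\P_{\mathcal{V}_1+\cdots+\mathcal{V}_N}\g\|_2\le\sum_i\Ex_{\g}\operatorname{sup}_{\mathcal{V}\in\mathcal{S}_i}\|\P_{\mathcal{V}}\g\|_2$. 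I would also record that each $\mathcal{U}_i\cap\Bbb S^{n-1}$ is symmetric, so each inner supremum is nonnegative and the $\mathcal{U}_i$ may be treated as cones without sign bookkeeping.

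Next I would reduce to $N=2$ by induction and peel off one summand at a time. Fixing $\g$ and a maximizing unit vector $\z=\sum_i\x_i$ with $\x_i\in\mathcal{U}_i$, linearity gives $\langle\g,\z\rangle=\sum_i\langle\g,\x_i\rangle\le\sum_i\|\x_i\|_2\,\operatorname{sup}_{\u\in\mathcal{U}_i\cap\Bbb S^{n-1}}\langle\g,\u\rangle$. A cleaner device in the projection picture is the orthogonal splitting $\mathcal{V}_1+\mathcal{V}_2=\mathcal{V}_1\oplus\Q_{\mathcal{V}_1}\mathcal{V}_2$, which gives $\|\P_{\mathcal{V}_1+\mathcal{V}_2}\g\|_2=\big(\|\P_{\mathcal{V}_1}\g\|_2^2+\|\P_{\Q_{\mathcal{V}_1}\mathcal{V}_2}\g\|_2^2\big)^{1/2}\le\|\P_{\mathcal{V}_1}\g\|_2+\|\P_{\Q_{\mathcal{V}_1}\mathcal{V}_2}\g\|_2$, isolating a first term whose supremum over $\mathcal{V}_1\in\mathcal{S}_1$ is exactly the inner quantity associated with $\mathcal{U}_1$.

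The hard part is the residual second term, and here I expect the real obstacle to lie. In the plain decomposition the component norms $\|\x_i\|_2$ are not controlled by $\|\z\|_2=1$: when the chosen $\mathcal{V}_i$ are nearly parallel they can be arbitrarily large, so a termwise Cauchy-Schwarz bound is useless. In the orthogonal splitting the tilted subspace $\Q_{\mathcal{V}_1}\mathcal{V}_2$ generally does not belong to $\mathcal{S}_2$, so $\|\P_{\Q_{\mathcal{V}_1}\mathcal{V}_2}\g\|_2$ cannot be dominated by $\operatorname{sup}_{\mathcal{V}\in\mathcal{S}_2}\|\P_{\mathcal{V}}\g\|_2$ for a fixed $\g$. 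In fact the inequality is false pointwise in $\g$ (a two-line example in $\Bbb R^2$ already exhibits a $\g$ with $\operatorname{sup}_{\z\in\mathcal{U}\cap\Bbb S^{n-1}}\langle\g,\z\rangle>\sum_i\operatorname{sup}_{\u\in\mathcal{U}_i\cap\Bbb S^{n-1}}\langle\g,\u\rangle$), so no set-inclusion or term-by-term argument can succeed; the estimate can only emerge after integrating against the rotationally invariant Gaussian measure. I would therefore aim to close the proof with a Gaussian comparison / averaging argument (a Slepian-Sudakov-Fernique type inequality, or a direct exploitation of the rotation invariance of $\g$) that converts the uncontrolled component scaling into the averaged bound, and I expect this averaging step, rather than the algebra above, to be the crux.
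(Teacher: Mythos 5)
Your proposal stops short of a proof: the translation to projections and the orthogonal splitting $\mathcal{V}_1+\mathcal{V}_2=\mathcal{V}_1\oplus\Q_{\mathcal{V}_1}\mathcal{V}_2$ are correct, but the step you yourself single out as the crux --- dominating $\Ex\,\operatorname{sup}\|\P_{\Q_{\mathcal{V}_1}\mathcal{V}_2}\g\|_2$ by $w(\mathcal{U}_2\cap\Bbb S^{n-1})$ via some Slepian-type or rotation-invariance averaging --- is never carried out, so the argument ends exactly where the difficulty begins. That said, your negative observation is sharper than you give it credit for. The paper's own proof is precisely the set-inclusion argument you rule out: it asserts $\mathcal{U}\cap\Bbb B^n\subset\sum_{i}(\mathcal{U}_i\cap\Bbb B^n)$ and concludes by additivity of $w$ under Minkowski sums. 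Your two-line example refutes that inclusion: with $\mathcal{U}_1=\operatorname{span}(\e_1)$ and $\mathcal{U}_2=\operatorname{span}(\cos\theta\,\e_1+\sin\theta\,\e_2)$ in $\Bbb R^2$ one has $\mathcal{U}=\Bbb R^2$, yet every point of $\mathcal{U}_1\cap\Bbb B^2+\mathcal{U}_2\cap\Bbb B^2$ has second coordinate of magnitude at most $\sin\theta$, so $\e_2\in\mathcal{U}\cap\Bbb B^2$ is not in the sum; equivalently, at $\g=\e_2$ the pointwise inequality reads $1\le\sin\theta$ and fails. So your critique does not merely close off one route --- it invalidates the paper's proof of the lemma.

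Unfortunately, the averaging rescue you hope for does not exist, because the statement itself fails once your near-parallel geometry is combined with a large union. Take $\f_1,\dots,\f_M$ orthonormal in $\e_1^{\perp}\subset\Bbb R^n$, and let $\mathcal{U}_1=\operatorname{span}(\e_1)$, $\mathcal{U}_2=\bigcup_{j\le M}\operatorname{span}(\cos\theta\,\e_1+\sin\theta\,\f_j)$, so that $\mathcal{U}=\mathcal{U}_1+\mathcal{U}_2=\bigcup_{j\le M}\operatorname{span}(\e_1,\f_j)$. Writing $g_1=\langle\g,\e_1\rangle$ and $h_j=\langle\g,\f_j\rangle$ (i.i.d. standard normal),
\begin{align}
w(\mathcal{U}\cap\Bbb S^{n-1})&=\Ex\max_{j\le M}\sqrt{g_1^2+h_j^2}\ \geq\ \Ex\max_{j\le M}|h_j|\ \geq\ c\sqrt{\log M},\nonumber\\
w(\mathcal{U}_1\cap\Bbb S^{n-1})+w(\mathcal{U}_2\cap\Bbb S^{n-1})&\leq(1+\cos\theta)\sqrt{2/\pi}+\sin\theta\,\Ex\max_{j\le M}|h_j|,\nonumber
\end{align}
and choosing $\sin\theta=(\log M)^{-1/2}$ makes the right-hand side $O(1)$ while the left-hand side grows like $\sqrt{\log M}$. (The same construction, with $\mathcal{U}_1\cup\mathcal{U}_2$ taken as a single union $\mathcal{U}^1$, also breaks the consequence (\ref{Eq_B_w_U1}).) Hence no Gaussian comparison can close your gap: the inequality (\ref{Eq_w_combined_gen}) can only hold under an additional hypothesis controlling the norms of the components in the decomposition of a unit vector of $\mathcal{U}$ --- e.g., a uniform angle (incoherence) condition between the componential subspaces, guaranteeing $\|\x_i\|_2\le\kappa$ whenever $\sum_i\x_i\in\mathcal{U}\cap\Bbb B^n$. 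Under such a condition your first, elementary Cauchy--Schwarz bound (equivalently, the paper's inclusion inflated by $\kappa$) already yields $w(\mathcal{U}\cap\Bbb S^{n-1})\le\kappa\sum_i w(\mathcal{U}_i\cap\Bbb S^{n-1})$ pointwise in $\g$, with no averaging needed.
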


\begin{proof}
Let $\{\mathcal{K}_i\}_{i=1}^N$ be $N$ arbitrary sets, we have the following fact
\begin{align}
\label{Eq_w_sum_of_sets_gen}
w \left (\sum \limits_{i=1}^{N} \mathcal{K}_i \right ) = \Ex_{\g} \left \{ \underset{\substack{\u_i \in \mathcal{K}_i \\ i=1 \ldots N}}{\operatorname{sup}} \langle \g, \sum \limits_{i=1}^{N} \u_i \rangle \right \} = \Ex_{\g} \left \{ \sum \limits_{i=1}^{N} \underset{\u_i \in \mathcal{K}_i}{\operatorname{sup}} \langle \g,\u_i \rangle\right \} = \sum \limits_{i=1}^{N} w(\mathcal{K}_i).
\end{align}
Next, note that $\mathcal{U} \cap \Bbb B^n \subset \sum \limits_{i=1}^{N} (\mathcal{U}_i \cap \Bbb B^n)$, and therefore we have
\begin{align}
\label{Eq_w_combined_gen2}
w(\mathcal{U} \cap \Bbb B^n) \leq w \left (\sum \limits_{i=1}^{N} (\mathcal{U}_i \cap \Bbb B^n) \right ).
\end{align}
Finally, using (\ref{Eq_w_sum_of_sets_gen}) together with (\ref{Eq_w_combined_gen2}), and exploiting the property $w(\mathcal{U}_i \cap \Bbb B^n) = w(\mathcal{U}_i \cap \Bbb S^{n-1})$ (see the explanation below (\ref{Eq_lemma3_main2_2}) in Appendix \ref{AppendixC}), we get (\ref{Eq_w_combined_gen}). % TOM: removed "Appendix" for elsarticle
\end{proof}

A direct result of (\ref{Eq_U_B_def2}) and Lemma \ref{lemma_w_combined_gen} is 
\begin{align}
\label{Eq_B_w_U1}
w(\mathcal{U}^B \cap \Bbb S^{n-1}) \leq B \cdot w(\mathcal{U}^1 \cap \Bbb S^{n-1}).
\end{align}
However, in many specific models, $\mathcal{U}^B$ is very similar to $\mathcal{U}^1$, and a bound on $w(\mathcal{U}^B \cap \Bbb S^{n-1})$ which is evaluated directly, can be tighter than (\ref{Eq_B_w_U1}). Nevertheless, Lemma \ref{lemma_w_combined_gen} is beneficial when a combined model is being considered, as discussed below.

The computational complexity of GCoSaMP is dominated by the complexity of the subspace selections problems.
For certain choices of $\mathcal S$, optimal subspace selection methods exist and can be implemented efficiently, while for other choices of $\mathcal S$, relaxations of GCoSaMP are required in order to obtain practical algorithms. These relaxed algorithms may not possess the recovery guarantees from Section \ref{Sec3}. However, they are still closely related to GCoSaMP. 

In the sequel, we discuss various specific models that fall into the general union of subspaces framework, and mainly focus on the following two subjects: Evaluation of the Gaussian mean width and relaxations of GCoSaMP.

\subsection{Sparsity and Structured Sparsity Models}

As discussed in Section \ref{Sec3}, choosing $\mathcal S$ as the set of $\binom{n}{k}$ subspaces, where each subspace is spanned by a different selection of $k$ columns of $\I_n$, we have that $\mathcal{U}^B \equiv  \mathcal{U}^B_{k\text{-sparse}}$ is the set of $Bk$-sparse vectors in $\Bbb R^n$. The quantity $w(\mathcal U^{B}_{k\text{-sparse}} \cap \Bbb S^{n-1})$ is bounded in (\ref{Eq_w_sparse_case}). Note that the upper bound in (\ref{Eq_w_sparse_case}) is tighter than the one obtained by (\ref{Eq_B_w_U1}).
For this choice of $\mathcal S$, GCoSaMP reduces to CoSaMP \cite{needell2009cosamp}: The subspace selection problems, i.e., determining $\tilde{\mathcal{V}}^t$ and $\mathcal{V}^t$, involve finding the best $2k$-sparse or $k$-sparse approximations to a given vector, and thus reduce to support selections that can be calculated easily using thresholding.

Another instance of union of subspaces is structured sparsity, in which the signal model is restricted to predefined sparsity patterns. Thus, assuming that the total sparsity is $k$, the set $\mathcal S$ includes only part of the $\binom{n}{k}$ subspaces associated with the unconstrained $k$-sparse vectors. For certain sparsity patterns, such as block-sparsity or tree-sparsity, there exist efficient methods to obtain a solution to the subspace selection problem \cite{baraniuk2010model}. 
In order to evaluate the Gaussian mean width for these patterns, we make use of the following lemma, which is an extension of Lemma 2.3 in \cite{plan2013robust}.

\begin{lemma}
\label{lemma_w_sparse_struct}

Let the set $\mathcal{S}$ be a subset of the $\binom{n}{k}$ subspaces associated with the unconstrained $k$-sparse vectors, such that $| \mathcal{S} | \leq \operatorname{exp}(\gamma(k,n))$, where $\gamma(k,n)$ is some function of $k$ and $n$. Then, we have
\begin{align}
\label{Eq_w_sparse_struct_case}
w^2(\mathcal{U}^1 \cap \Bbb S^{n-1}) \leq C \cdot \mathcal{O}(k+2\gamma(k,n)).
\end{align}
\end{lemma}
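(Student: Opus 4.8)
The plan is to reduce the Gaussian mean width to an expected maximum of Gaussian restriction norms, and then to control that maximum by combining Gaussian concentration with a union bound over the at most $\exp(\gamma(k,n))$ subspaces.

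First I would exploit that every $\mathcal{V}\in\mathcal{S}$ is a coordinate subspace: it is spanned by $k$ columns of $\I_n$, so it is determined by a support set $T\subset\{1,\dots,n\}$ with $|T|=k$. For a fixed such $T$, Cauchy--Schwarz gives $\sup_{\z\in\mathcal{V}\cap\Bbb S^{n-1}}\langle\g,\z\rangle=\|\P_{\mathcal{V}}\g\|_2=\|\g_T\|_2$, where $\g_T$ is the restriction of $\g$ to the coordinates in $T$, the supremum being attained at $\z=\P_{\mathcal{V}}\g/\|\P_{\mathcal{V}}\g\|_2$. Taking the supremum over the union and then the expectation, I obtain
\[ w(\mathcal{U}^1\cap\Bbb S^{n-1})=\Ex_{\g}\,\max_{T\in\mathcal{I}}\|\g_T\|_2, \]
where $\mathcal{I}$ indexes the supports of the subspaces in $\mathcal{S}$, so that $|\mathcal{I}|=|\mathcal{S}|\le\exp(\gamma(k,n))$. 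This converts the geometric quantity into an expected maximum of $N\le e^{\gamma}$ identically distributed (but dependent) random variables, each equal in law to the Euclidean norm of a standard Gaussian vector in $\Bbb R^{k}$.

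Next I would bound this expected maximum. For each fixed $T$ the map $\g\mapsto\|\g_T\|_2$ is $1$-Lipschitz, and $\Ex\|\g_T\|_2\le(\Ex\|\g_T\|_2^2)^{1/2}=\sqrt{k}$, so Gaussian concentration \cite{ledoux2013probability} yields $\Pr(\|\g_T\|_2\ge\sqrt{k}+u)\le e^{-u^2/2}$ for every $u>0$. A union bound over the $N\le e^{\gamma}$ sets then gives $\Pr(\max_{T}\|\g_T\|_2\ge\sqrt{k}+u)\le e^{\gamma-u^2/2}$. Integrating this tail, splitting the range at the level $u_0=\sqrt{2\gamma}$ where the exponent vanishes and using the standard estimate $\int_{a}^{\infty}e^{-u^2/2}\,du\le a^{-1}e^{-a^2/2}$ on the far part, I expect to obtain $\Ex\,\max_{T}\|\g_T\|_2\le\sqrt{k}+\sqrt{2\gamma}$ up to a bounded additive term.

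Finally, squaring and applying the arithmetic--geometric mean inequality $2\sqrt{2k\gamma}\le k+2\gamma$ collapses the cross term, since $(\sqrt{k}+\sqrt{2\gamma})^2=k+2\sqrt{2k\gamma}+2\gamma\le 2(k+2\gamma)$, which produces the claimed $w^2(\mathcal{U}^1\cap\Bbb S^{n-1})\le C\cdot\mathcal{O}(k+2\gamma(k,n))$. The main obstacle is the middle step: carrying out the tail integration so that the $k$ and $\gamma$ contributions appear additively with the correct constants, and checking that the union bound is not lossy. A minor point to dispose of is the degenerate regime of very small $\gamma$ (e.g. $|\mathcal{S}|=1$), where the crude tail estimate is vacuous but the bound $w\le\sqrt{k}$ holds directly and can be absorbed into the constant $C$.
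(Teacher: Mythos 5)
Your argument is correct and is essentially the paper's proof spelled out in full: the paper simply cites the proof of Lemma 2.3 in \cite{plan2013robust}, which is exactly your reduction of $w(\mathcal{U}^1 \cap \Bbb S^{n-1})$ to $\Ex_{\g} \max_T \|\g_T\|_2$ followed by Gaussian concentration, a union bound over the supports, and tail integration, with the count $\binom{n}{k} \leq \operatorname{exp}(k+k\operatorname{log}\frac{n}{k})$ replaced by $|\mathcal{S}| \leq \operatorname{exp}(\gamma(k,n))$. Your handling of the cross term via $2\sqrt{2k\gamma} \leq k+2\gamma$ and of the degenerate small-$\gamma$ regime is sound, so there is nothing to add.
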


\begin{proof}
Following the proof in \cite{plan2013robust} (which is given in more details in \cite{website2013robusttt}), but replacing $|\mathcal{S}| = \binom{n}{k} \leq \left ( \frac{\mathrm{e}n}{k} \right )^k = \operatorname{exp}(k+k\operatorname{log}\frac{n}{k})$ with $| \mathcal{S} | \leq \operatorname{exp}(\gamma(k,n))$, it is straightforward to obtain (\ref{Eq_w_sparse_struct_case}).
\end{proof}

For tree-sparsity we have $|\mathcal{S}| \leq \frac{(2\mathrm{e})^k}{k+1} = \operatorname{exp}(k+k\operatorname{log}\frac{2}{(k+1)^{1/k}})$ \cite{baraniuk2010model}, i.e., $\gamma(k,n) = k+k\operatorname{log}\frac{2}{(k+1)^{1/k}}$, which yields
\begin{align}
\label{Eq_w_sparse_struct_tree}
w^2(\mathcal{U}^1_{k\text{-tree-sparse}} \cap \Bbb S^{n-1}) \leq C k.
\end{align}

For block-sparsity, where the blocks are disjoint and of size $J$, we have $|\mathcal{S}| = \binom{n/J}{k/J} \leq \left ( \frac{\mathrm{e}n}{k} \right )^{k/J} = \operatorname{exp}(k/J+k/J \operatorname{log}(n/k))$, i.e., $\gamma(k,n) = k/J+k/J \operatorname{log}(n/k)$, which yields
\begin{align}
\label{Eq_w_sparse_struct_tree}
w^2(\mathcal{U}^1_{(k,J)\text{-block-sparse}} \cap \Bbb S^{n-1}) \leq C \cdot \mathcal{O}(k+\frac{k}{J} \operatorname{log}\frac{n}{k}).
\end{align}

We remind the reader that we may calculate $w(\mathcal{U}^B \cap \Bbb S^{n-1})$ from $w(\mathcal{U}^1 \cap \Bbb S^{n-1})$ using (\ref{Eq_B_w_U1}). 

\subsection{Low-Rank Model}
Note that using (column or row stack) vectorization, our theory can be applied immediately to matrices. Assuming that the signal is a rank-$r$ matrix $\X \in \Bbb{R}^{n_1 \times n_2}$ (now the signal dimension is $n=n_1 \cdot n_2$), using singular value decomposition (SVD), $\X$ can be written as
\begin{align}
\label{Eq_X_svd_norm}
\X = \U\bSigma\V^* = \sum \limits_{i=1}^{r} \sigma_i \u_i \v_i^*,
\end{align}
where $\u_i, \v_i$ are the columns of the orthonormal matrices $\U$ and $\V$, respectively, associated with nonzero singular value $\sigma_i$. It is easy to see that all the rank-$r$ matrices that can be written as a linear combination of the same $r$ rank-1 matrices $\{\u_i \v_i^*\}_{i=1}^{r}$ lie on the subspace $\left \{ \sum \limits_{i=1}^r a_i \u_i \v_i^* : a_i \in \Bbb R \right \}$. Therefore, we can choose $\mathcal S$ as the infinite set of such subspaces, each spanned by $r$ rank-1 matrices, and the set of all rank-$r$ matrices can be seen as an infinite union of these subspaces. 
As a result from this choice of $\mathcal S$, the set $\mathcal S^B$ is also an infinite set of subspaces, where each subspace is spanned by no more than $Br$ rank-1 matrices, and GCoSaMP reduces to ADMiRA \cite{lee2010admira}.
In this case, the subspace selection problems involve finding the best rank-$2r$ or rank-$r$ approximations to a given matrix, which can be done efficiently by truncating the SVD of the matrix.
We turn to evaluate the Gaussian mean width for $\mathcal{U}^B \equiv  \mathcal{U}^B_{\text{rank-}r}$.
Note that we have the following identity for the Frobenius norm $\| \X \|_F$ 
\begin{align}
\label{Eq_Frob_norm}
\| \X \|_F = \sqrt{\mathrm{tr}(\X^*\X)} = \sqrt{\mathrm{tr}(\V\bSigma^2\V^*)} =  \sqrt{\mathrm{tr}(\bSigma^2)} = \| \mathrm{diag}(\bSigma)\|_2,
\end{align}
where $\mathrm{diag}$ denotes the main diagonal of a matrix. Therefore, applying Cauchy-Schwarz inequality to the singular values of $\X$, we have that the set $\mathcal{U}^B_{\text{rank-}r} \cap \Bbb S^{n-1}$ is contained in the convex set
\begin{align}
\label{Eq_K_n1n2k_def}
\mathcal K_{n_1,n_2,Br} \triangleq \{ \X \in \Bbb R^{n_1 \times n_2} \, : \, \|\X\|_* \leq \sqrt{Br}, \|\X\|_F \leq 1 \},
\end{align}
where $\| \X \|_*$ denotes the nuclear norm, i.e., the sum of the singular values of $\X$. Therefore, the upper bound on $w(\mathcal K_{n_1,n_2,Br})$ in \cite{plan2013robust} also upper bounds the Gaussian mean width of the set we are interested in
\begin{align}
\label{Eq_w_low_rank_case}
w(\mathcal{U}^B_{\text{rank-}r} \cap \Bbb S^{n-1}) \leq (\sqrt{n_1}+\sqrt{n_2})\sqrt{Br}.
\end{align}

\subsection{Sparse Synthesis Model}

Other choices of $\mathcal S$ may require relaxation of GCoSaMP in order to reduce its computational complexity. For example, \cite{davenport2013signal, giryes2015greedy} considered the synthesis model, which assumes that the signal $\x$ admits a $k$-sparse representation $\balpha$ in a given dictionary $\D \in \Bbb R^{n \times d}$, i.e., $\x=\D\balpha$, where $\balpha$ is a $k$-sparse vector in $\Bbb R^d$ (typically $d>n$). Denote the columns of $\D$ by $\{\d_i\}_{i=1}^d$. For this model, $\mathcal S$ consists of $\binom{d}{k}$ subspaces, where each subspace is spanned by a different selection of $k$ vectors from $\{\d_i\}_{i=1}^d$. The set $\mathcal S^B$ consists of subspaces spanned by no more than $Bk$ vectors from $\{\d_i\}_{i=1}^d$. In this case, the subspace selection problem is NP-hard in general \cite{davis1997adaptive}.
For this reason, both works suggested a signal space CoSaMP (SSCoSaMP) algorithm, in which the optimal subspace selections are replaced with approximated methods. While the theoretical results in \cite{davenport2013signal, giryes2015greedy} require that these approximations satisfy strict near-optimality constraints, both works also empirically examine the performance of SSCoSaMP using practical (but not theoretically backed) selections methods such as hard thresholding or OMP.
Let us upper bound $w(\mathcal U^B_{k\text{-synthesis}} \cap \Bbb S^{n-1})$ for a dictionary $\D$ that has the RIP of order $Bk$ with an RIP-constant $\delta_{Bk}$. It is easy to show that if $\balpha$ satisfies $\|\balpha\|_0=Bk$ and $\|\D\balpha\|_2=1$, it also satisfies $\|\balpha\|_1 \leq \sqrt{\frac{Bk}{1-\delta_{Bk}}}$. Now, define the following convex set (for fixed $\balpha$ and $\D$)
\begin{align}
\label{Eq_K_alpha}
\mathcal K_{\balpha,\D} \triangleq \|\balpha\|_1 \cdot \mathrm{conv}\{\pm\d_i\}_{i=1}^d,
\end{align}
where $\mathrm{conv}$ denotes the convex hull of a set. It was shown in \cite{vershynin2015estimation} that
\begin{align}
\label{Eq_w_K_alpha}
w(\mathcal K_{\balpha,\D}) \leq C \|\balpha\|_1 \sqrt{\mathrm{log}d},
\end{align}
where $C$ is a positive constant. Therefore, since $\x = \sum \limits_{i=1}^{d} \alpha_i \d_i$ leads to $\x \in \mathcal K_{\balpha,\D}$, from the arguments above, we have
\begin{align}
\label{Eq_w_dictionary_case}
w(\mathcal U^B_{k\text{-synthesis}} \cap \Bbb S^{n-1}) \leq C \sqrt{\frac{Bk}{1-\delta_{Bk}}\mathrm{log}d}.
\end{align}

\subsection{Cosparse Analysis Model}

Another case in which the subspace selection problems are NP-hard in general is the cosparse analysis model \cite{nam2013cosparse, tillmann2014projection}.
The cosparse analysis model assumes that the analyzed vector $\bOmega\x$ is sparse, where $\bOmega \in \Bbb R^{p \times n}$ is a possibly redundant analysis operator ($p \geq n$). However, it focusses on the cosupport of $\bOmega\x$, i.e., the zeros of $\bOmega\x$, instead of on its support. Denote the rows of $\bOmega$ by $\{\bomega_i^T\}_{i=1}^p$, the cosupport of $\bOmega\x$ by $\Lambda$, and the size of the cosupport by $\ell$. We have that $\x \in \mathcal{V}_{\Lambda}$, where 
\begin{align}
\mathcal{V}_{\Lambda} \triangleq \mathrm{span}(\bomega_i, i \in \Lambda)^{\perp} = \{\x \, : \, \langle \x,\bomega_i \rangle =0, \forall i \in \Lambda \}.
\end{align}
However, since it is assumed that only the size of the cosupport is known, and not the cosupport itself, the set $\mathcal S$ consists of all the $\binom{p}{\ell}$ possible subspaces $\mathcal{S} = \{\mathcal{V}_{\Lambda} \,:\, |\Lambda|=\ell  \}$, and $\x$ resides in a union of these subspaces.
An analysis version of CoSaMP (ACoSaMP) was proposed in \cite{giryes2014greedy}.
Even though for some analysis operators, such as the 1D finite difference operator or a concatenation of the identity operator and the 1D finite difference operator, the optimal cosupport selection (or equivalently, subspace selection) can be accomplished in polynomial complexity, in general, no efficient method for optimal cosupport selection is known.
Therefore, ACoSaMP replaces optimal cosupport selections with approximated methods. However, except this relaxation, ACoSaMP can be seen as an instance of GCoSaMP. In each iteration, it extracts $\Lambda_{\Delta}^t$, a near-optimal cosupport of size $2\ell-p$ of the proxy $\A^*\r$. This is equivalent to finding a near-optimal subspace $\mathcal{V}_{\Lambda_{\Delta}^t} \in \mathcal{S}^2$, since the minimal size of the cosupport of a sum of two $\ell$-cosparse vectors is $2\ell-p$ (see \cite{giryes2014greedy} for more details). Next, the cosupport for the least squares (LS) estimation step is obtained using intersection of the newly extracted cosupport and the cosupport of the previous iteration, i.e., $\tilde{\Lambda}^t = \Lambda^{t-1} \cap \Lambda_{\Delta}^t$. This is equivalent to constraining the LS estimation to $\mathcal{V}_{\Lambda^t} + \mathcal{V}_{\Lambda_{\Delta}^t}$. In practice, efficient methods to obtain near-optimal cosupports are also unknown (for most analysis operators). Therefore, \cite{giryes2014greedy} empirically examined the performance of ACoSaMP using simple thresholding. Nevertheless, the algorithm exhibited good performance.
Assuming that $\bOmega$ has linearly independent columns and denoting its pseudoinverse by $\bOmega^{\dagger}$, the quantity $w(\mathcal U^B_{\ell\text{-analysis}} \cap \Bbb S^{n-1})$ can be evaluated by exploiting results for the sparse synthesis model with an $n \times p$ dictionary $\D = \bOmega^{\dagger}$, since $\x = \bOmega^{\dagger}\bOmega\x$ and $\bOmega\x$ is a $(p-\ell)$-sparse vector. Therefore, if $\bOmega^{\dagger}$ has the RIP of order $B(p-\ell)$ with an RIP-constant $\delta_{B(p-\ell)}$, we have
\begin{align}
\label{Eq_w_dictionary_case_cosparse}
w(\mathcal U^B_{\ell\text{-analysis}} \cap \Bbb S^{n-1}) \leq C \sqrt{\frac{B(p-\ell)}{1-\delta_{B(p-\ell)}}\mathrm{log}p}
\end{align}
for the same reasons that yield (\ref{Eq_w_dictionary_case}).

\subsection{Sparsity and Low-Rank Combined Model}

In the following, we discuss other choices of $\mathcal S$ for which the optimal subspace selections are unknown in general. Specifically, we consider the case of a combined model. 
One such example is recovering a matrix $\X = \L+\S$, that is the sum of a rank-$r$ matrix, $\L$, and a $k$-sparse matrix, $\S$. In this case, each subspace in the set $\mathcal S$ is the sum of two ”componential subspaces”, one associated with $k$-sparse matrices and the other associated with rank-$r$ matrices (as discussed above). As a result, each subspace in the set $\mathcal{S}^B$ is a sum of two componential subspaces as well, one associated with matrices whose sparsity does not exceed $Bk$, and the other associated with matrices whose rank does not exceed $Br$.
In \cite{waters2011sparcs}, an algorithm named SpaRCS was proposed for this problem. This algorithm resembles the general algorithm that we present here, except that in SpaRCS the subspace selection steps and the LS estimation step are performed separately for each componential subspace. Although lacking theoretical performance guarantees, SpaRCS has shown good performance in experiments. Splitting the subspace selection allows performing hard thresholding for the sparse part and truncated SVD for the low-rank part, that can be seen as an approximation to the unknown optimal subspace selection.
Denoting the union of subspaces for this model by $\mathcal{U}^B_{k\text{-sparse, rank-}r}$, we have $\mathcal{U}^B_{k\text{-sparse, rank-}r} = \mathcal{U}^B_{k\text{-sparse}} + \mathcal{U}^B_{\text{rank-}r}$. Therefore, using Lemma \ref{lemma_w_combined_gen} we get
\begin{align}
\label{Eq_w_sparse_low_rank_case}
w(\mathcal{U}^B_{k\text{-sparse, rank-}r} \cap \Bbb S^{n-1}) \leq w(\mathcal{U}^B_{k\text{-sparse}} \cap \Bbb S^{n-1}) + w(\mathcal{U}^B_{\text{rank-}r} \cap \Bbb S^{n-1}),
\end{align}
where the terms in the right-hand side of the last inequality are bounded in (\ref{Eq_w_sparse_case}) and (\ref{Eq_w_low_rank_case}). However, note that in order to determine that an algorithm has the guarantees from Section \ref{Sec3}, it must be an exact instance GCoSaMP. Such an algorithm is still unknown in this case, as optimal subspace selection methods are unknown.

\subsection{Sparse Synthesis and Cosparse Analysis Combined Model}

Another interesting combined model is the one that uses both the sparse synthesis model and the cosparse analysis model,
and
considers a signal $\x = \x_1 + \x_2$, where $\x_1$ admits a $k$-sparse representation $\balpha$ in a given dictionary $\D \in \Bbb R^{n \times d}$, and $\x_2$ is $\ell$-cosparse under a given analysis operator $\bOmega \in \Bbb R^{p \times n}$, i.e., $\bOmega\x_2$ has at least $\ell$ zeros. This combined model may be advantageous for images that contain both texture and piecewise constant (cartoon) parts. The texture part may admit a sparse representation in a suitable dictionary, and the cartoon part is expected to have a cosparse analysis representation under the finite difference operator, even if the original image does not comply with each of these models separately. Clearly, this is the case when the texture and cartoon parts overlap.

Denoting the union of subspaces for this model by $\mathcal{U}^B_{k\text{-synthesis, }\ell\text{-analysis}}$, we have $\mathcal{U}^B_{k\text{-synthesis, }\ell\text{-analysis}} = \mathcal{U}^B_{k\text{-synthesis}} + \mathcal{U}^B_{\ell\text{-analysis}}$. Therefore, using Lemma \ref{lemma_w_combined_gen} we get
\begin{align}
\label{Eq_w_snythesis_anlysis_case}
w(\mathcal{U}^B_{k\text{-synthesis, }\ell\text{-analysis}} \cap \Bbb S^{n-1}) \leq w(\mathcal{U}^B_{k\text{-synthesis}} \cap \Bbb S^{n-1}) + w(\mathcal{U}^B_{\ell \text{-analysis}} \cap \Bbb S^{n-1}),
\end{align}
where the terms in the right-hand side of the last inequality are bounded in (\ref{Eq_w_dictionary_case}) and (\ref{Eq_w_dictionary_case_cosparse}) for $\D$ and $\bOmega^{\dagger}$ that have the RIP.
However, as follows from our discussion on each sub-model, the subspace selection problem for the combined model is NP-hard in general. Therefore, we propose a practical reconstruction algorithm for this model, inspired by GCoSaMP.

We propose synthesis-analysis CoSaMP (SACoSaMP) for reconstruction of $\x$ from $m$ ($m < n$) compressive linear measurements given by $\y = \A \x + \e$. The algorithm is presented in Algorithm 2. It can also be used to directly recover $\x_1$ and $\x_2$ separately. It combines SSCoSaMP \cite{davenport2013signal} and ACoSaMP \cite{giryes2014greedy} in a way that resembles the way SpaRCS combines CoSaMP \cite{needell2009cosamp} and ADMiRA \cite{lee2010admira}, except that the LS estimation step in SACoSaMP is unified, a fact that can be considered as being more loyal to GCoSaMP, and is justified by our experiments in Section \ref{Sec5b}. Let us clarify the notations being used in Algorithm 2. We use $[1..p]$ to denote the integers between $1$ and $p$, $\operatorname{supp}(\u,k)$ for the indices of the $k$ largest elements (in magnitude) of the vector $\u$, and $\operatorname{cosupp}(\u,\ell)$ for the indices of the $\ell$ smallest elements (in magnitude) of the vector $\u$. The notation $\balpha|_T$ stands for a sub-vector of $\balpha$ with elements that belong to the set $T$, and $T^C$ denotes the complementary set of $T$. $\D_T$ is a sub-matrix of $\D$ with columns corresponding to the set $T$, and by abuse of notation, $\bOmega_{\Lambda}$ is a sub-matrix of $\bOmega$ with rows that belong to the set $\Lambda$. Finally, $\Q_{\Lambda}=\I_n-\bOmega_{\Lambda}^{\dagger}\bOmega_{\Lambda}$ is the orthogonal projection onto the orthogonal complement of the row space of $\bOmega_{\Lambda}$.
We note that the extracted cosupport $\Lambda_{\Delta}^t$ is of size $\ell$ rather than $2\ell-p$, since it was empirically shown in \cite{giryes2014greedy} that this modification improves the performance of ACoSaMP.

SACoSaMP is a relaxed version of GCoSaMP. The support and cosupport selections are separated, and use simple thresholding, which is not optimal in general, and thus the resulted subspace may not be optimal.
Therefore, even if one can evaluate the Gaussian mean width for the combined model described above, SACoSaMP does not possess the recovery guarantees from Section \ref{Sec3}. However, it is still inspired by the general scheme of GCoSaMP and by the idea of not being restricted to a single traditional model. The usefulness of the algorithm is demonstrated in Section \ref{Sec5b}.

\begin{algorithm}
\caption{Synthesis-Analysis CoSaMP (SACoSaMP)}% algorithm for sparse synthesis and cosparse analysis combined model}
\vspace{2mm}
\kwInput{$\A, \y, k, \ell, \D, \bOmega,$ stopping criterion, where $\y = \A(\x_1+\x_2)+\e$, such that $\e$ is an additive noise, $k$ is the sparsity of $\x_1$ in $\D$ and $\ell$ is the cosparsity of $\x_2$ in $\bOmega$.}
\kwOutput{$\hat{\x}_1, \hat{\x}_2, \hat{\x}$ estimates for $\x_1, \x_2, \x$, respectively.}
\kwInitialize{$\r=\y, \x_1^0=0, \x_2^0=0, t=0, T^0=\varnothing, \Lambda^0=[1..p]$}
\While{stopping criterion not met}{
    $t = t+1$\;
    $\tilde{\v} = \A^*\r$\;
    $T_{\Delta}^t = \operatorname{supp}(\D^*\tilde{\v},2k)$; \,\, $\Lambda_{\Delta}^t = \operatorname{cosupp}(\bOmega\tilde{\v},\ell)$\;
    $\tilde{T}^t = T^{t-1} \cup T_{\Delta}^t$; \,\, $\tilde{\Lambda}^t = \Lambda^{t-1} \cap \Lambda_{\Delta}^t$\;
    $(\tilde{\balpha}, \tilde{\x}_2) = \underset{\overline{\balpha},\overline{\x}_2}{\operatorname{argmin}} \| \y - \A(\D\overline{\balpha}+\overline{\x}_2) \|_2 \,\,\, \operatorname{s.t.} \,\,\, \overline{\balpha}|_{(\tilde{T}^t)^C}=0, \,\, \bOmega_{\tilde{\Lambda}^t}\overline{\x}_2=0$\;
    $T^t = \operatorname{supp}(\tilde{\balpha},k)$; \,\, $\Lambda^t = \operatorname{cosupp}(\bOmega\tilde{\x}_2,\ell)$\;
    $\x_1^t = \D_{T^t}\tilde{\balpha}_{T^t}$; \,\, $\x_2^t = \Q_{\Lambda^t}\tilde{\x}_2$\;
    $\r = \y - \A (\x_1^t+\x_2^t)$\;
}
$\hat{\x}_1=\x_1^t; \,\, \hat{\x}_2=\x_2^t; \,\, \hat{\x} = \x_1^t + \x_2^t$\;
\end{algorithm}

%%%%%%%%%%%%%%%%%%%%%%%%%%%%%%%%%%%%%%%%%%%%%%%%%%%%%%%%%%%%%
%%%%%%%%%%%%%%%%%%%%%%%%%%%%%%%%%%%%%%%%%%%%%%%%%%%%%%%%%%%%%
%%
%% SECTION V: Experiments
%%
%%%%%%%%%%%%%%%%%%%%%%%%%%%%%%%%%%%%%%%%%%%%%%%%%%%%%%%%%%%%%
%%%%%%%%%%%%%%%%%%%%%%%%%%%%%%%%%%%%%%%%%%%%%%%%%%%%%%%%%%%%%

\section{Experiments}
\label{Sec5}

The experiments section is divided into two parts. In the first part, we conduct an experiment that supports the theoretical result on the denoising capabilities of GCoSaMP. Specifically, we focus on the original sparsity-based CoSaMP, which is equivalent to GCoSaMP when $\mathcal S$ is the set of subspaces associated with the traditional sparsity model.
In the second part, we consider the sparse synthesis and cosparse analysis combined model. We examine the performance of SACoSaMP, focusing on its application to simultaneous image reconstruction and structured noise removal.

\subsection{The vanishing effect of stationary noise in CoSaMP}
\label{Sec5a}

As was mentioned above, when $\mathcal S$ is the set of subspaces associated with the traditional sparsity model, GCoSaMP reduces to CoSaMP. Using synthetic data, we demonstrate that for fixed $k,n$ and large enough $m$, the reconstruction error (after the convergence of the algorithm) decreases proportionally to $1/\sqrt{m}$. This empirical result supports the theoretical result in Section \ref{Sec3}, that for $\|\e\|_2=\mathcal{O}(\sqrt{m})$ (stationary noise) and $m \gg m_0$, the reconstruction error behaves like $\mathcal{O}(\sqrt{\frac{m_0}{m}})$.
We use a signal of length $n=2\mathrm{e}5$ with $k=5$ nonzero entries independently drawn from the standard normal distribution, and zero-mean noise drawn from Laplace distribution. The measurement matrix $\A$ is an $m \times  n$ matrix with $m$ varying between $100$ and $1\mathrm{e}5$. For each value of $m$ we perform 30 Monte Carlo experiments, in which the entries of $\A$ are independently drawn from the standard normal distribution and the noise is scaled such that $\|\e\|_2/\|\A\x\|_2=0.01$. Fig. \ref{fig:vanishing_err}. shows the Euclidean norm of the reconstruction error. The markers represent empirical results and the solid line is computed according to $cm^{-0.5}$, where the constant $c$ compensates for the shift at the last point ($m=1\mathrm{e}5$). Starting from $m=1\mathrm{e}3$, the results are in good agreement, even though our theoretical convergence guarantees required larger values of $m$, since $14.5^2 m_0$ is about $1.18\mathrm{e}4$ (see (\ref{Eq_w_sparse_case})).
 
\begin{figure}
    \centering
    \includegraphics[width=0.5\textwidth]{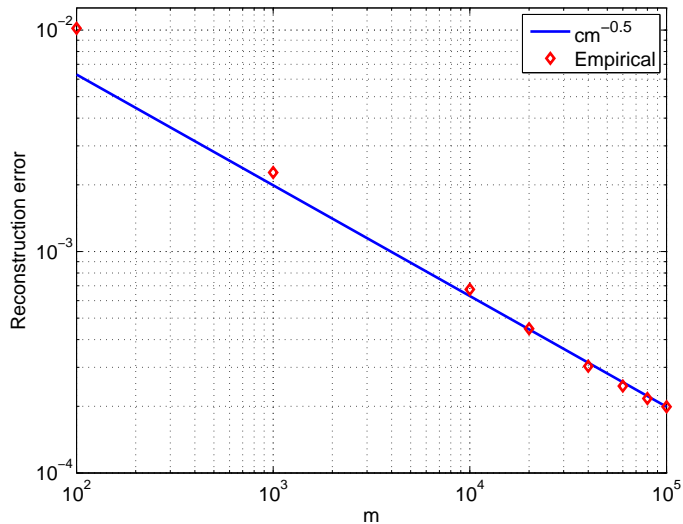}
    \caption{CoSaMP reconstruction error vs. number of measurements} \label{fig:vanishing_err}
\end{figure}

\subsection{Reconstruction and structured noise removal using SACoSaMP}
\label{Sec5b}

We turn now to examine the performance of SACoSaMP. We focus on its application to simultaneous image reconstruction and denoising, when the noise is nonstationary and structured. Such noise may result from intermittent interference between electronic components during the image acquisition process. In our experiments we use the sparse synthesis model for the noise. The dictionary $\D$ is the (inverse) local discrete cosine transform (DCT) with window size of $64 \times 64$ pixels and overlap of 32 pixels. The $16 \times 16$ lowest frequencies in each window are not included in the dictionary. The texture image $\x_1$ that represents the noise is obtained using $k=500$ random dictionary words with Gaussian coefficients. The original clean image we consider as $\x_2$ is the $512 \times 512$ natural image {\em house} given in Fig. \ref{fig:house_clean}. The Frobenius norm of the noise image is scaled to 0.1 of the Frobenius norm of the {\em house} image. The noisy image $\x_1+\x_2$, quantized to 8 bits per pixel (bpp), is given in Fig. \ref{fig:house_mixed}. The sampling operator $\A$ is a two dimensional Fourier transform that measures only $m/n=0.254$ of the points in the Fourier domain according to the binary mask given in Fig. \ref{fig:mask_vardens_512}. The cosparse operator $\bOmega$ is the finite difference analysis operator that computes horizontal and vertical discrete derivatives of an image.  The na\"{\i}ve reconstruction obtained by inverse Fourier transform on the measurements is shown in Fig. \ref{fig:house_trivial}. Its peak signal to noise ratio (PSNR) with respect to the noisy image is 24.11 dB. Clearly, it is also not a satisfying reconstruction of $\x_2$.

Figs. \ref{fig:house_clean_rec} and \ref{fig:house_mixed_rec} show the results of SACoSaMP obtained after 6 iterations, using $k=500$ and $\ell=386464$ ($\ell$ is slightly larger than the number of entries with magnitude lower than 3 in the analysis representation of $\x_2$). Fig. \ref{fig:house_clean_rec} shows the reconstruction of $\x_2$, quantized to 8 bpp. Its PSNR, with respect to the clean {\em house} image, is 35.16 dB. One can see that this cosparse part still includes significant portion of the texture of the original {\em house} image, but none of the texture of the noise. Fig. \ref{fig:house_mixed_rec} shows the (full) reconstruction of $\x_1+\x_2$ quantized to 8 bpp. Its PSNR, with respect to the noisy image, is 35.28 dB. 

Regarding the implementation of SACoSaMP, in each iteration we take only the real part of the results and we use conjugate gradients for the minimization process. We stop the algorithm when there is no significant reduction in the residual error $\|\r\|_2$.

Using the same parameters but modifying SACoSaMP by splitting the LS step into two separated LS estimations, one for each sub-model (as done in SpaRCS \cite{waters2011sparcs}), we get inferior results. The number of iterations until convergence is 14, and the reconstructions have smaller PSNR. Fig. \ref{fig:house_clean_rec_sep} shows the reconstruction of $\x_2$. Its PSNR (with respect to the clean image) is 29.34 dB. The reconstruction of $\x_1+\x_2$ has PSNR of 28.78 dB (with respect to the noisy image). We do not show the latter in the paper due to space limitations.
We try other several values of $\ell$, but no significant improvement is obtained.

We also examine the performance of ACoSaMP (which is based only on the cosparse model). We try to reconstruct the clean {\em house} image using large value of $\ell$, and to reconstruct the noisy image using small value of $\ell$. In both cases the results are not satisfying. For example, the result of ACoSaMP with $\ell=386464$ is shown in Fig. \ref{fig:house_acosamp}.

%\vspace{5mm}
%\newpage

\begin{figure}
 \centering
  \subcaptionbox{{\em House}  \label{fig:house_clean}}{%
  \includegraphics[width=0.24\columnwidth]{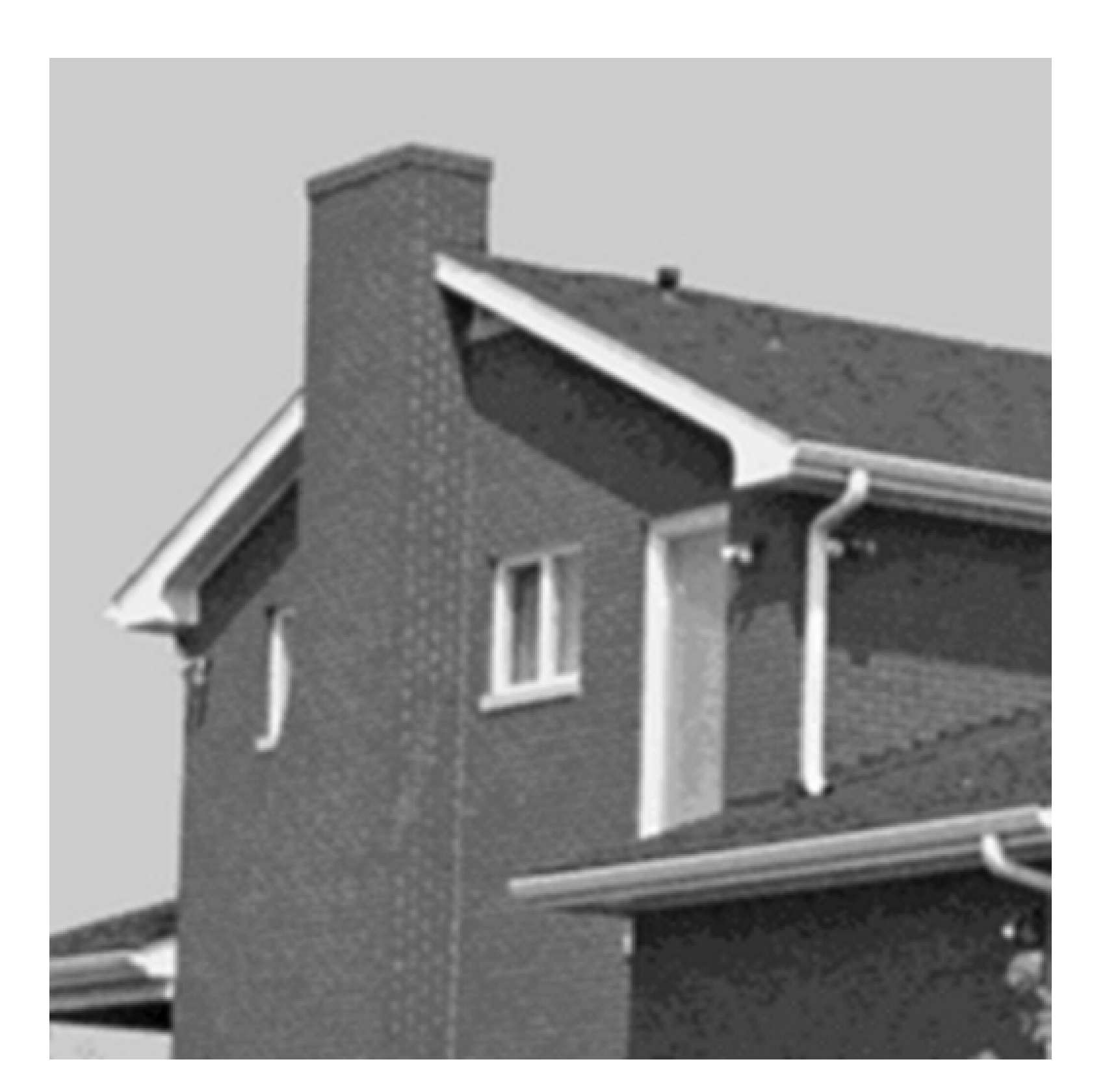}}
  \subcaptionbox{{\em House} + textured noise  \label{fig:house_mixed}}{%
  \includegraphics[width=0.24\columnwidth]{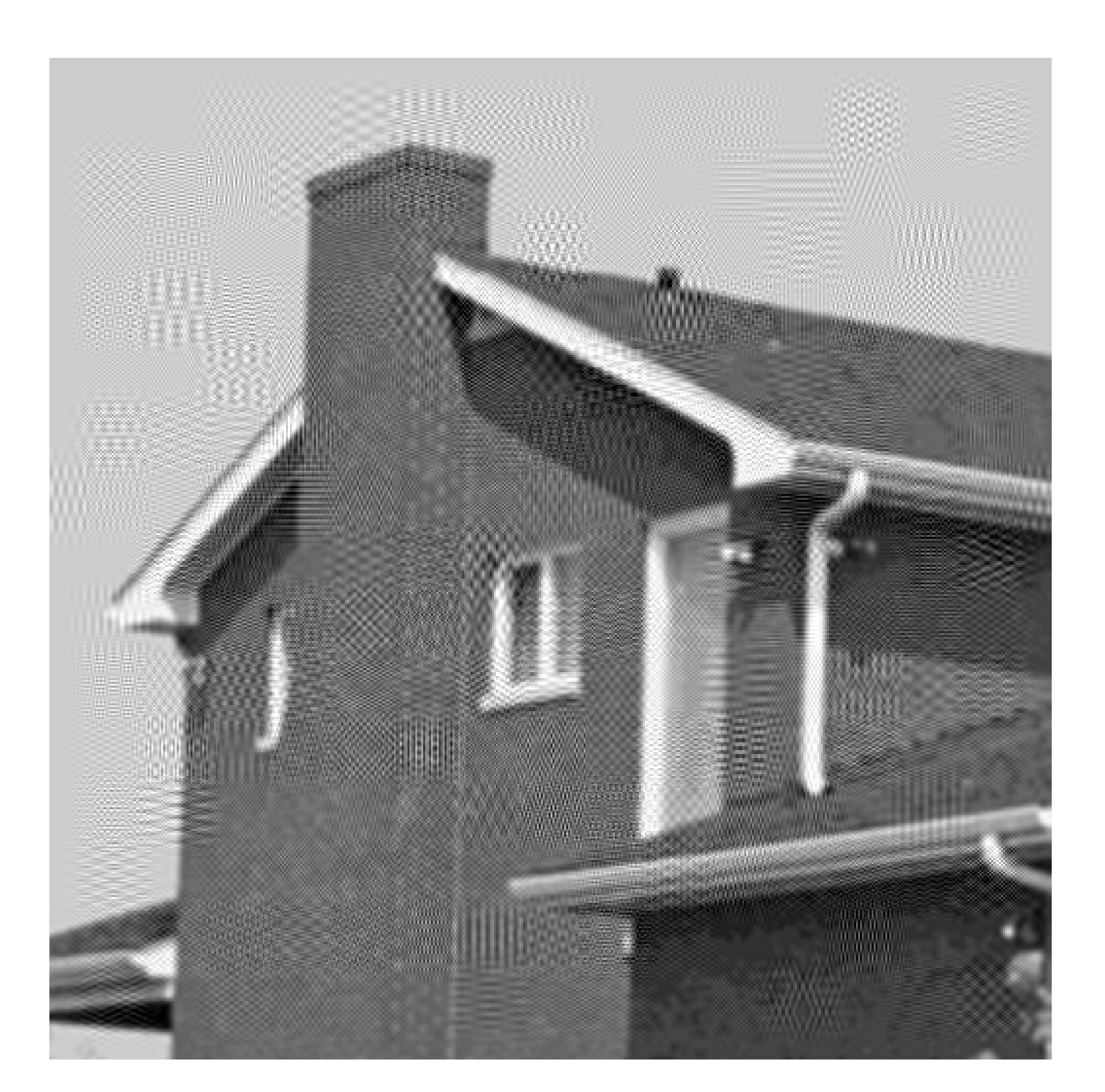}}
  \subcaptionbox{Sampling mask  \label{fig:mask_vardens_512}}{%
  \includegraphics[width=0.24\columnwidth]{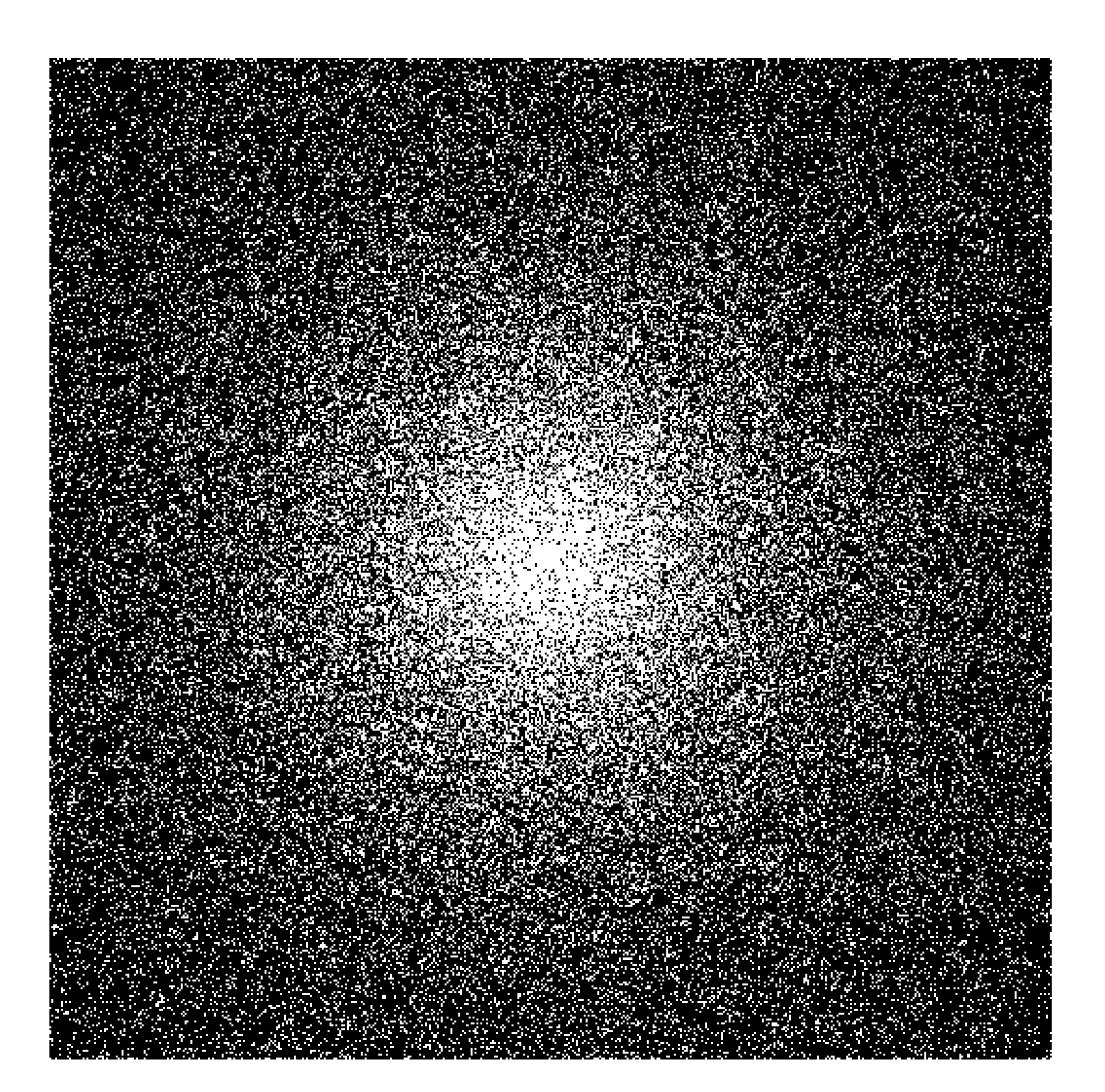}}
  \subcaptionbox{Na\"{\i}ve reconstruction  \label{fig:house_trivial}}{%
  \includegraphics[width=0.24\columnwidth]{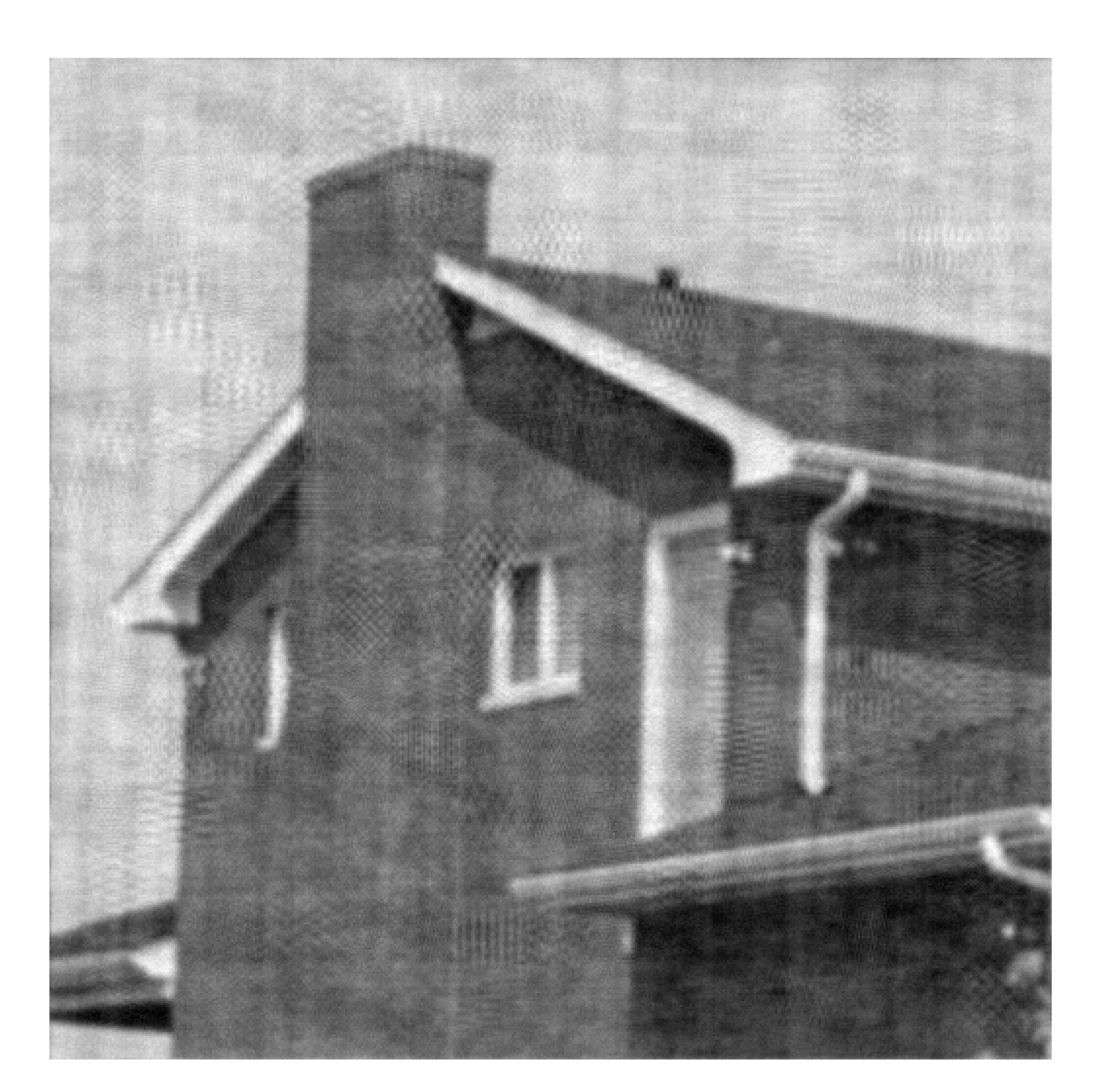}}
  \subcaptionbox{SACoSaMP - {\em house} reconstruction  \label{fig:house_clean_rec}}{%
  \includegraphics[width=0.24\columnwidth]{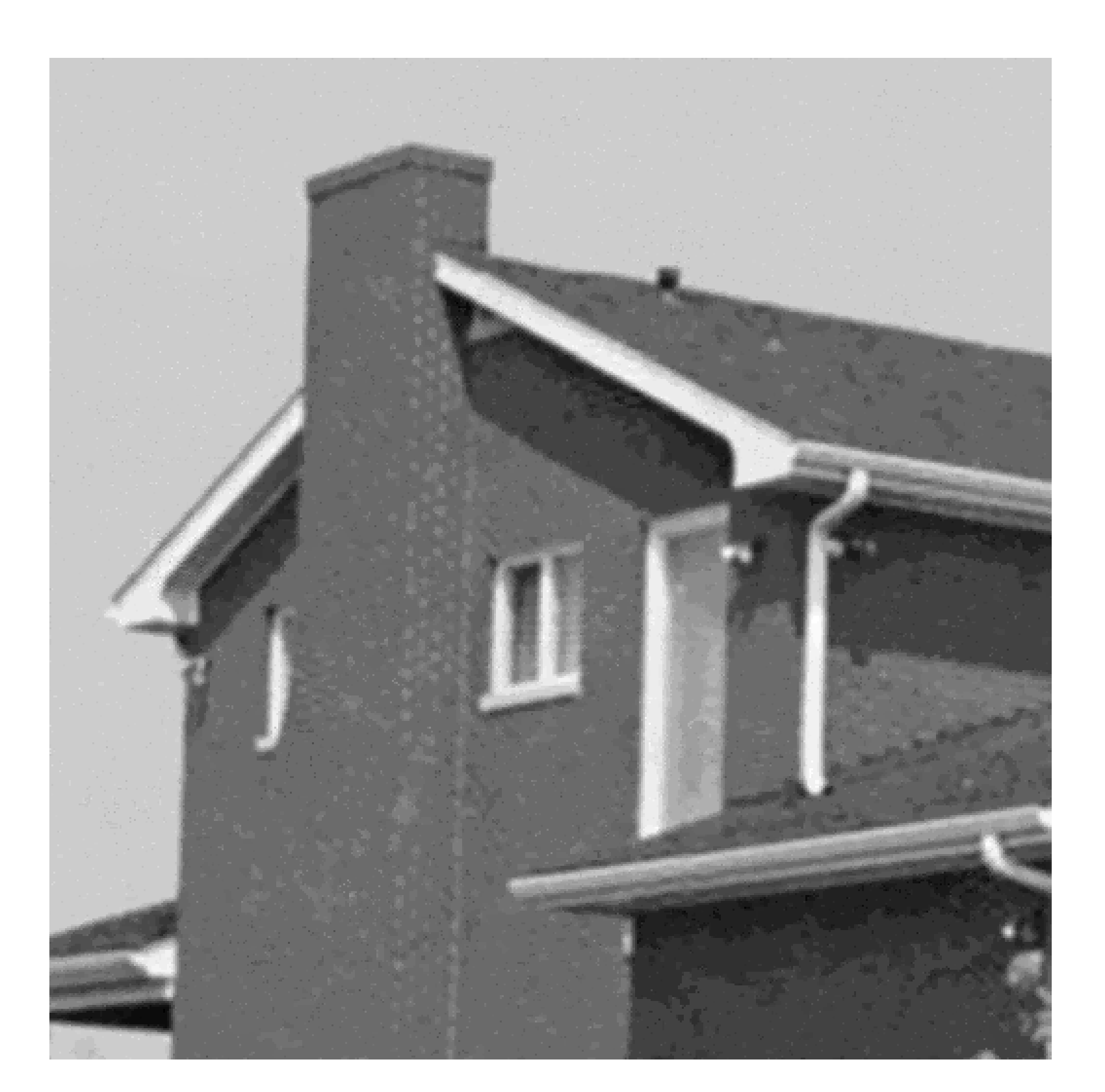}}
  \subcaptionbox{SACoSaMP - {\em house} + textured noise reconstruction \label{fig:house_mixed_rec}}{%
  \includegraphics[width=0.24\columnwidth]{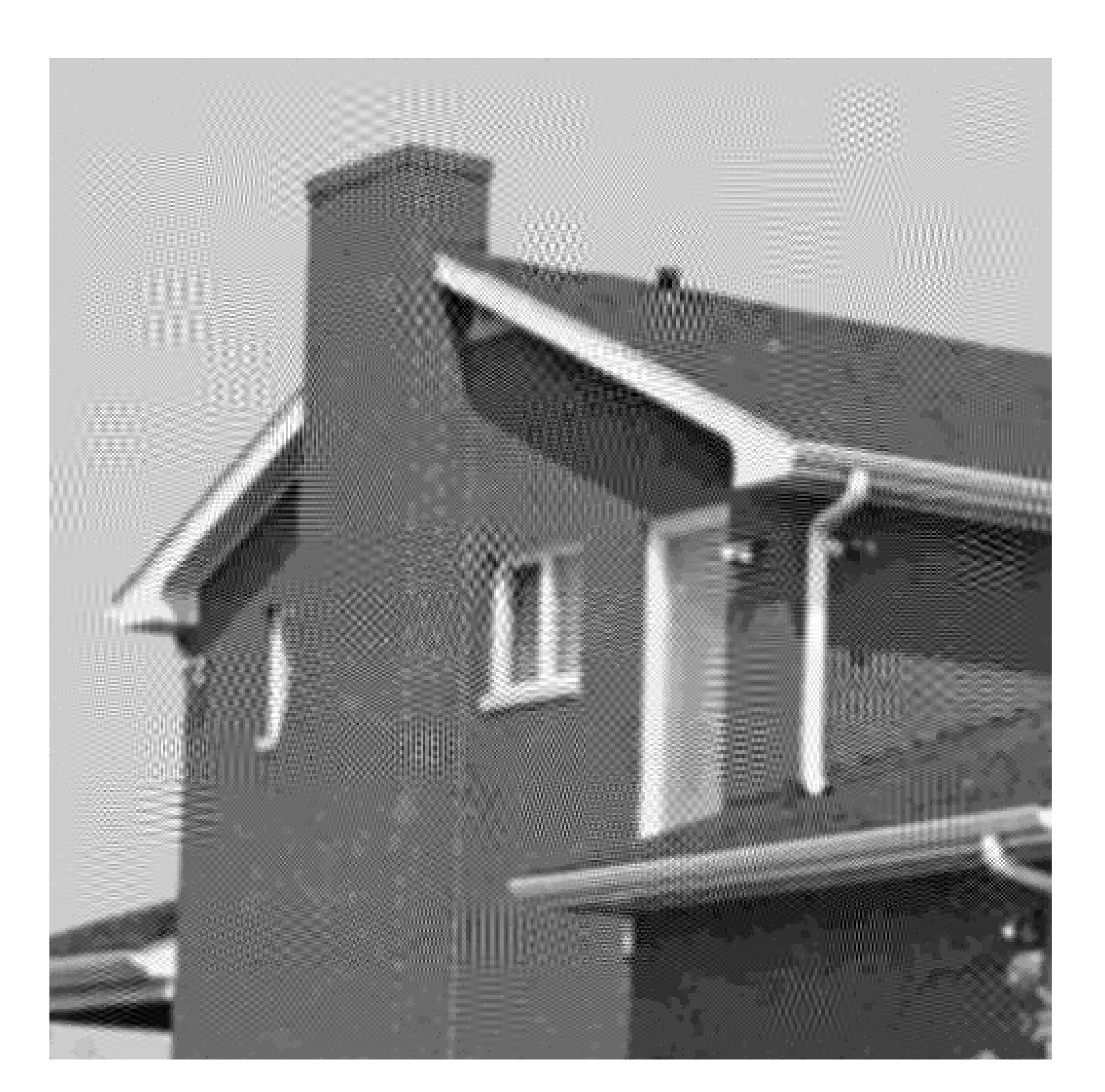}}
  \subcaptionbox{Modified SACoSaMP - {\em house} reconstruction  \label{fig:house_clean_rec_sep}}{%
  \includegraphics[width=0.24\columnwidth]{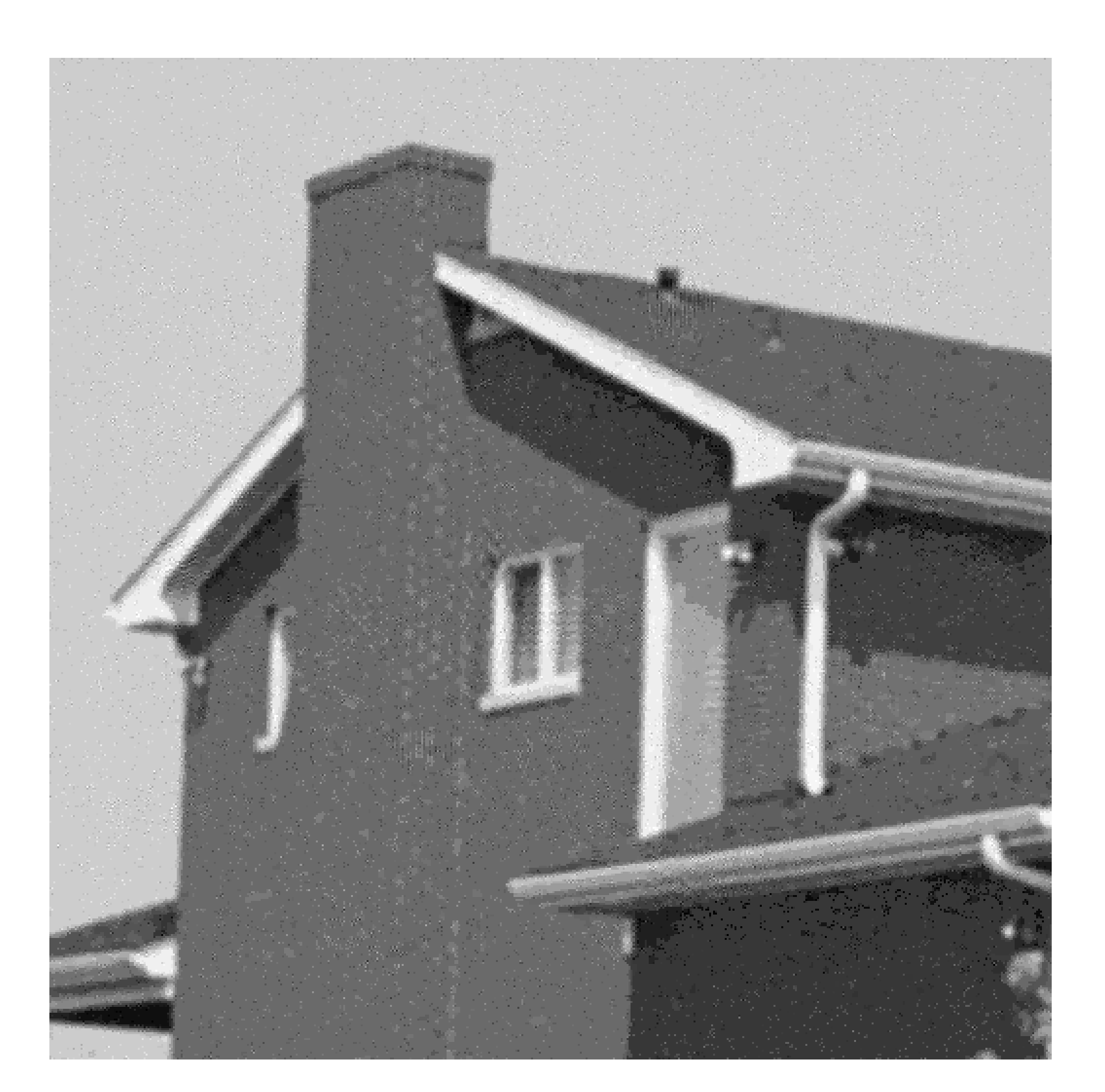}}
  \subcaptionbox{ACoSaMP reconstruction  \label{fig:house_acosamp}}{%
  \includegraphics[width=0.24\columnwidth]{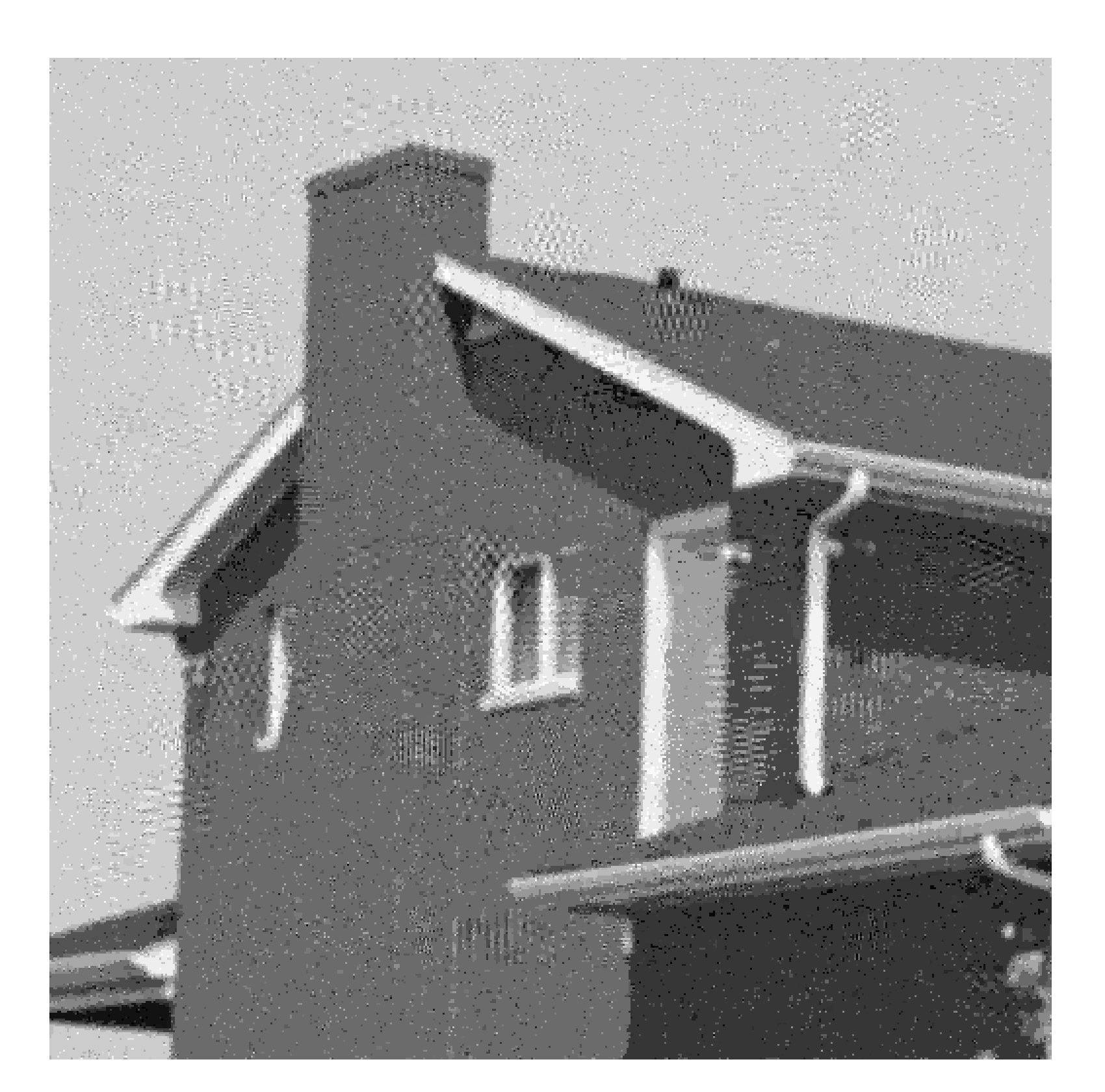}}
  \caption{Experiments inputs and results for {\em house} test image. From left to right and top to bottom: {\em house} image, noisy {\em house} image (textured noise), binary mask for Fourier domain sampling, na\"{\i}ve reconstruction using zero padding and inverse Fourier transform, SACoSaMP reconstruction of {\em house} image (cosparse analysis part), SACoSaMP reconstruction of noisy {\em house} image (cosparse analysis + sparse synthesis parts), modified SACoSaMP (split LS) reconstruction of {\em house} image (cosparse analysis part), and ACoSaMP reconstruction. }\label{fig1}
\end{figure}

%\vspace{5mm}

We repeat the experiments using a $256 \times 256$ Shepp-Logan phantom image with modified intensities instead of the {\em house} image. The binary mask consists of 25 radial lines that contains $m/n=0.095$ of the points in the Fourier domain. The texture image that represents the noise is created as before. The Frobenius norm of the noise image is scaled to 0.2 of the Frobenius norm of the cartoon image. SACoSaMP uses $k=500$ and $\ell=127064$ ($\ell$ is slightly lower than the number of zeros in the analysis representation of the cartoon image). The PSNR of the reconstructed clean image is 35.07 dB, and the PSNR of the reconstructed noisy image is 23.94 dB. The reason for the relatively low PSNR of the latter is the small number of high frequency points in the binary mask. Regarding the algorithm with split LS step (modified SACoSaMP), the best results are received with $\ell=126514$. The PSNR of the reconstructed clean image is 30.71 dB, and the PSNR of the reconstructed noisy image is 23.29 dB. The number of iterations is 17 comparing to 8 when the LS step is unified. As before, the results of ACoSaMP are not satisfying (we show only its result for $\ell=126514$). The inputs and the various results are shown in Figs. \ref{fig:phantom_clean} - \ref{fig:phantom_acosamp}.

%\vspace{5mm}
%\newpage

\begin{figure}
 \centering
  \subcaptionbox{{\em Phantom}  \label{fig:phantom_clean}}{%
  \includegraphics[width=0.24\columnwidth]{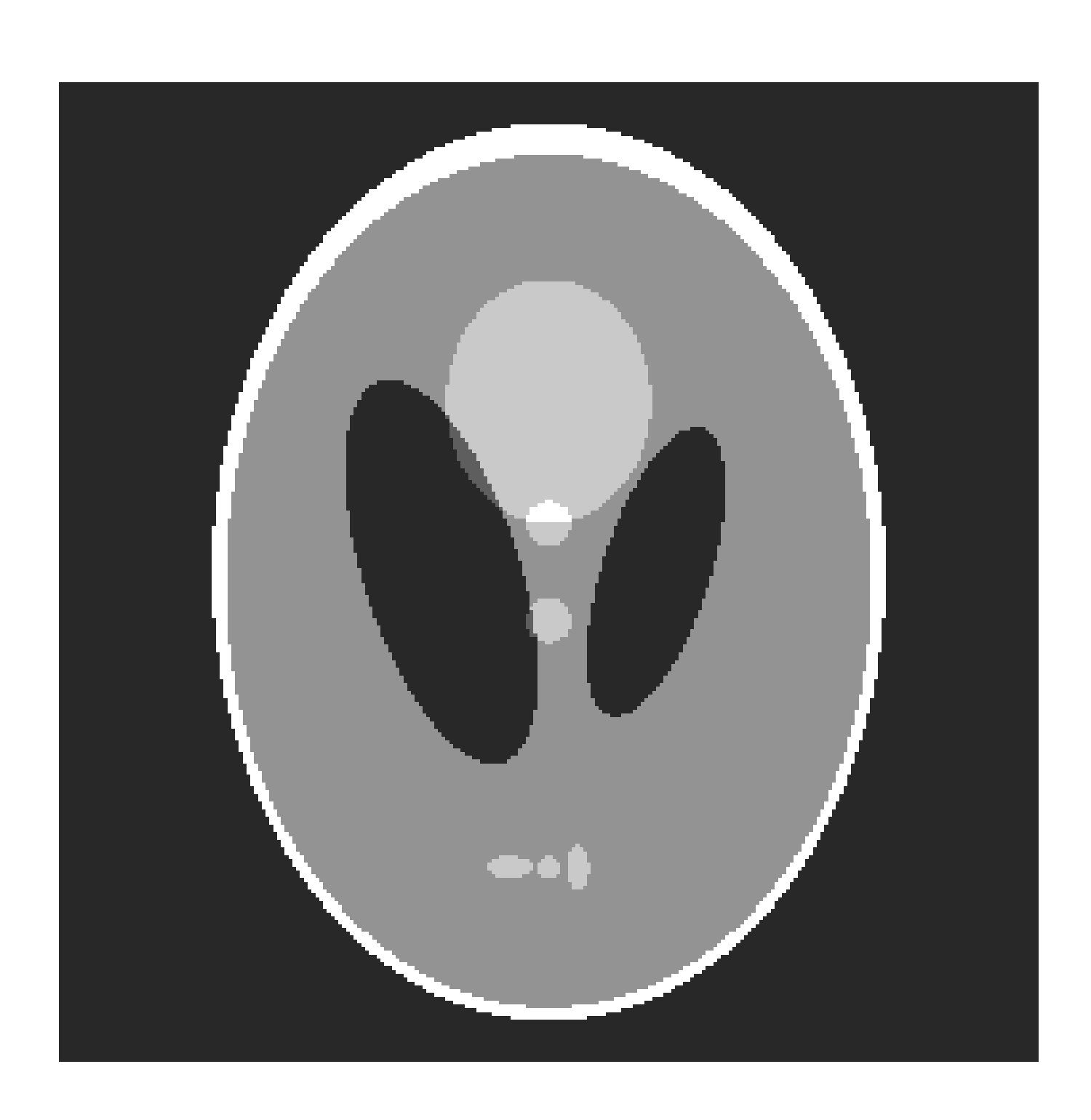}}
  \subcaptionbox{{\em Phantom} + textured noise  \label{fig:phantom_mixed}}{%
  \includegraphics[width=0.24\columnwidth]{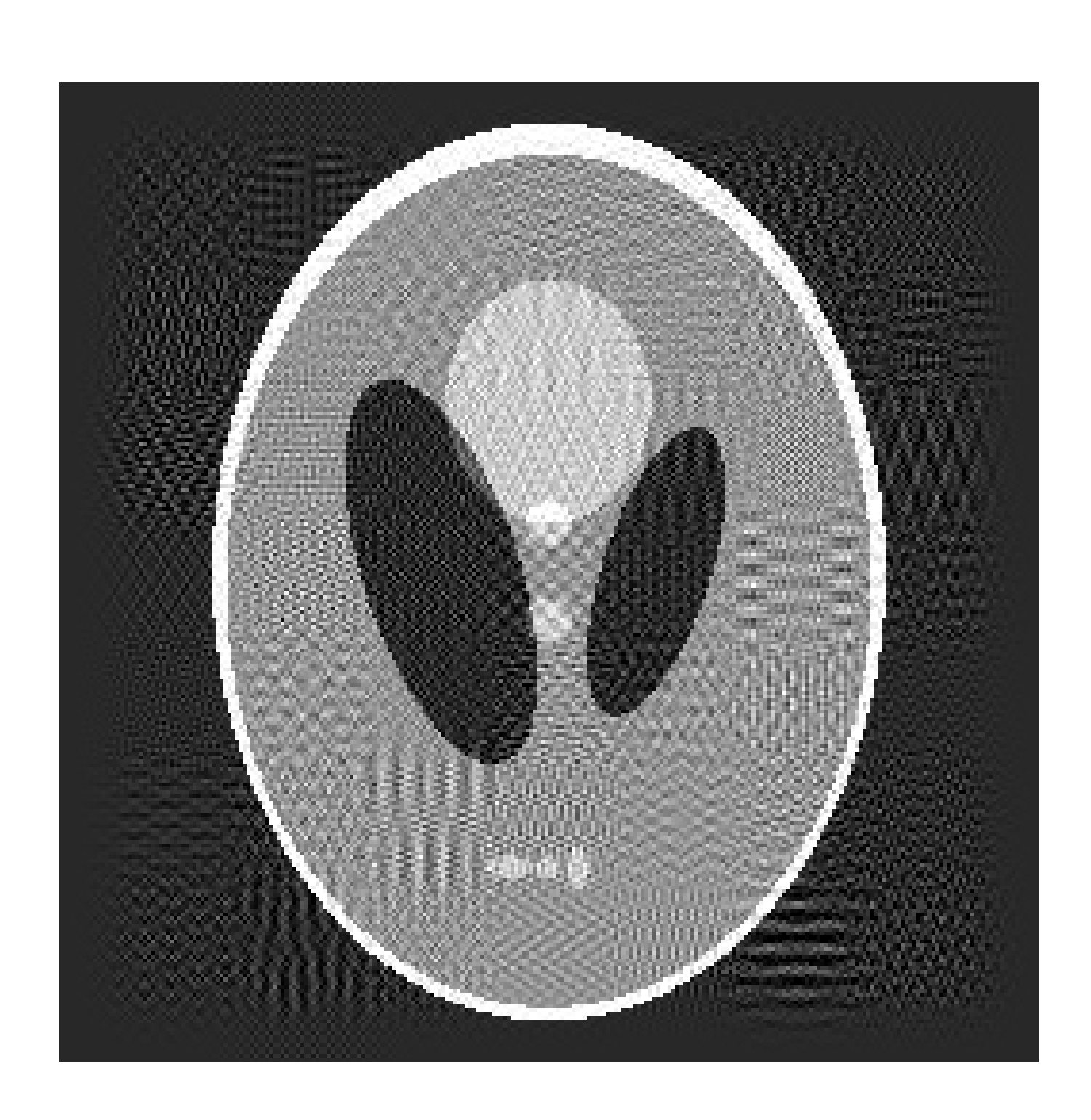}}
  \subcaptionbox{Sampling mask  \label{fig:mask_vardens_256}}{%
  \includegraphics[width=0.24\columnwidth]{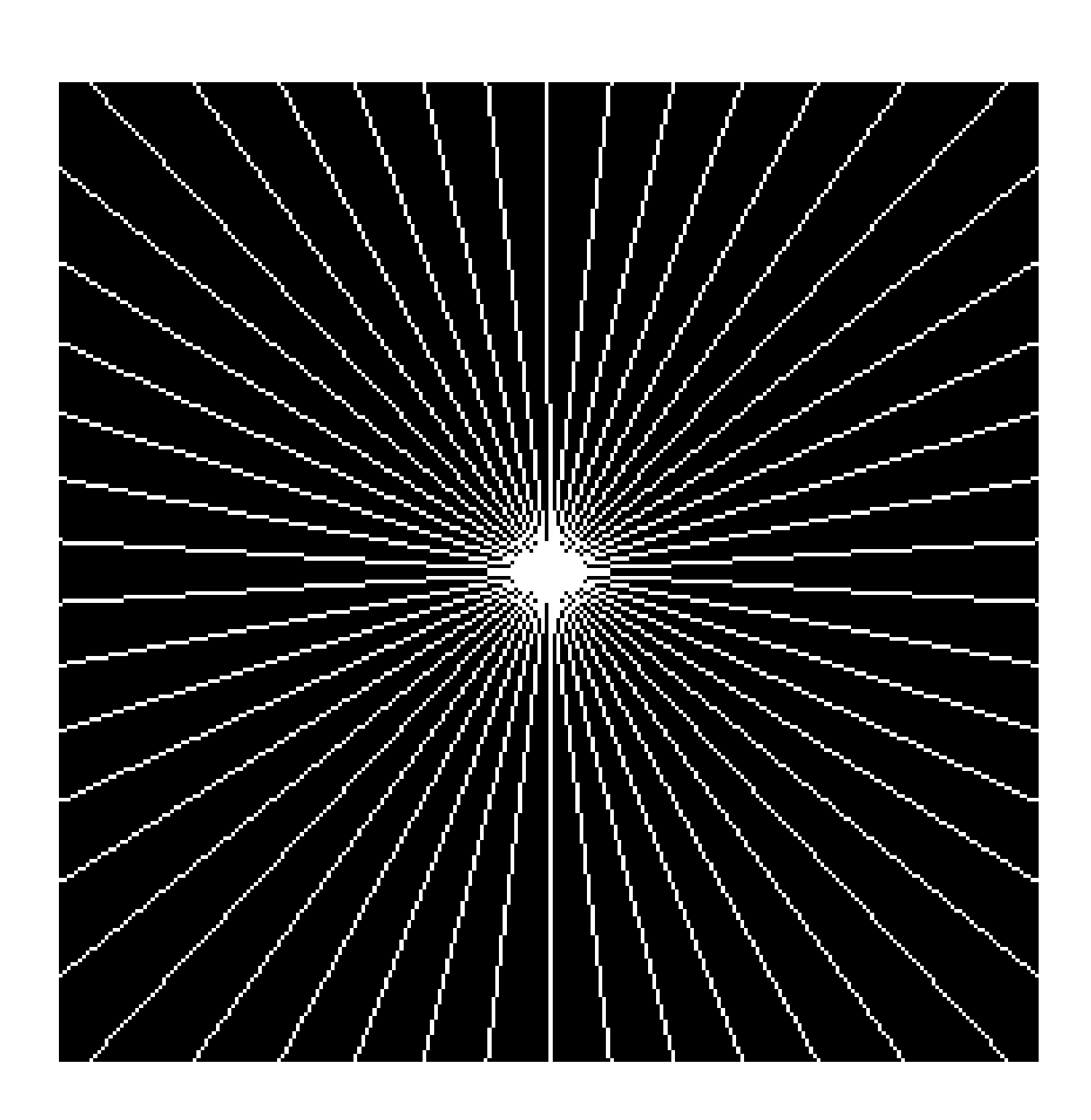}}
  \subcaptionbox{Na\"{\i}ve reconstruction  \label{fig:phantom_trivial}}{%
  \includegraphics[width=0.24\columnwidth]{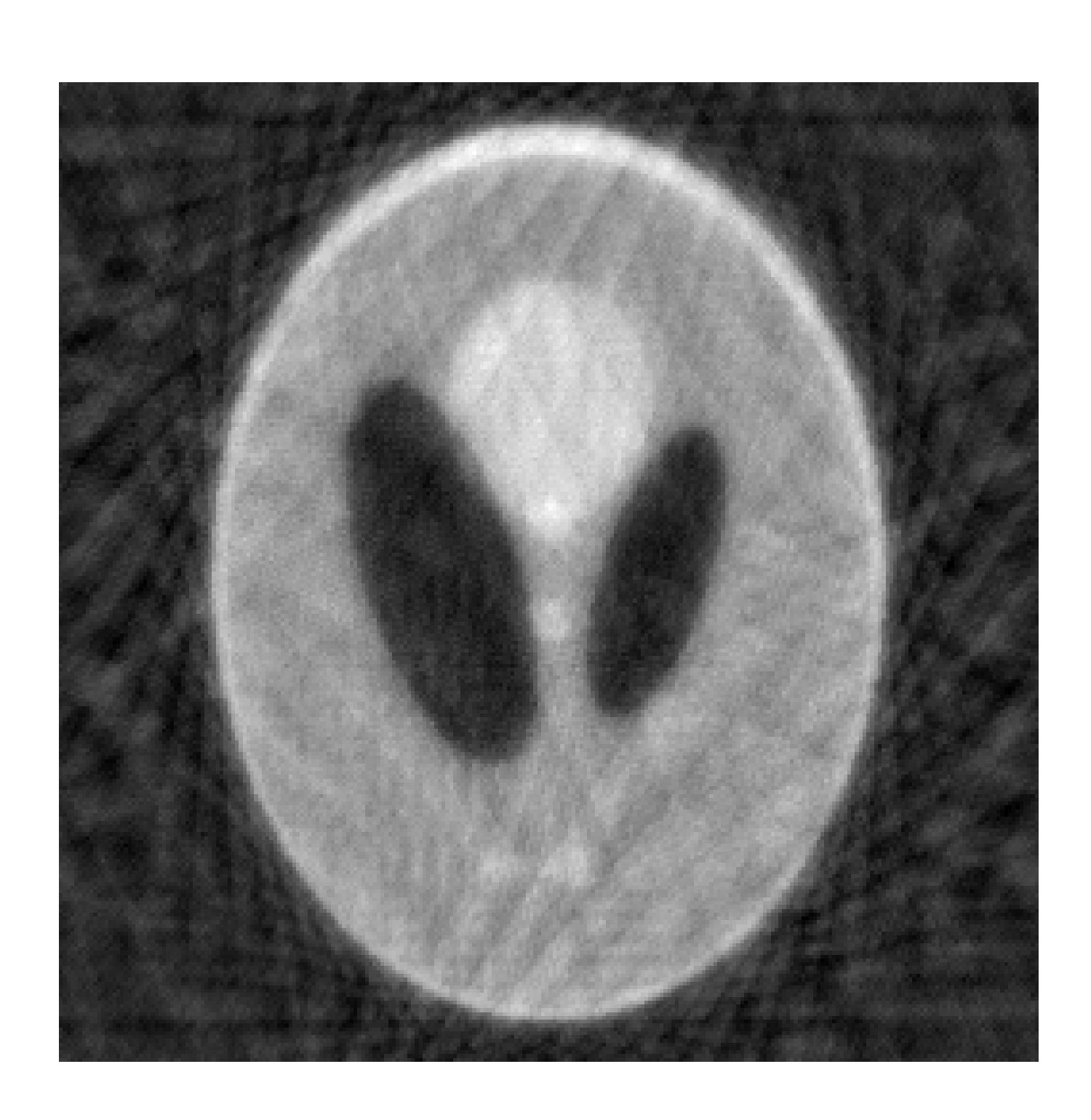}}
  \subcaptionbox{SACoSaMP - {\em phantom} reconstruction  \label{fig:phantom_clean_rec}}{%
  \includegraphics[width=0.24\columnwidth]{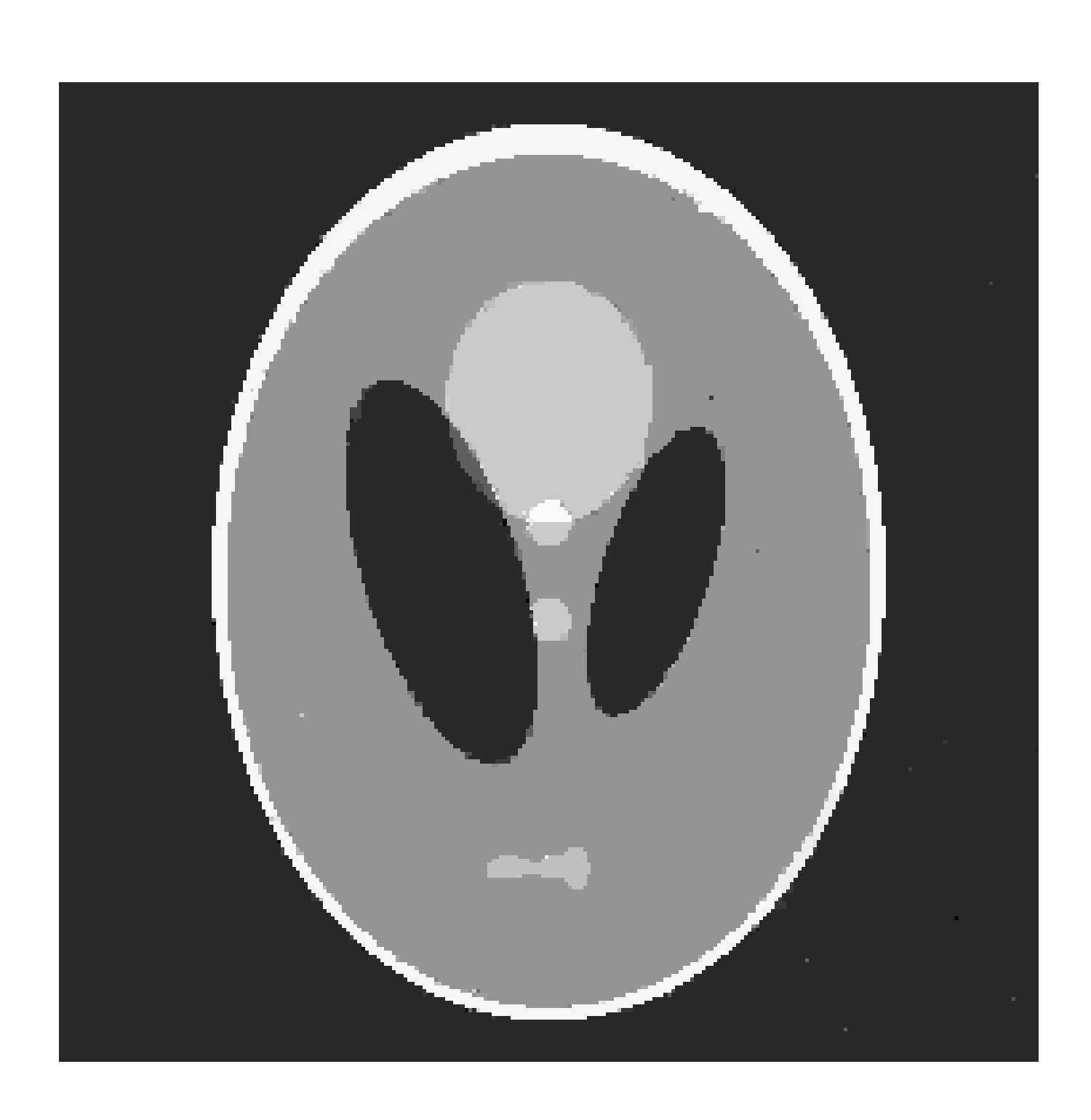}}
  \subcaptionbox{SACoSaMP - {\em phantom} + textured noise reconstruction  \label{fig:phantom_mixed_rec}}{%
  \includegraphics[width=0.24\columnwidth]{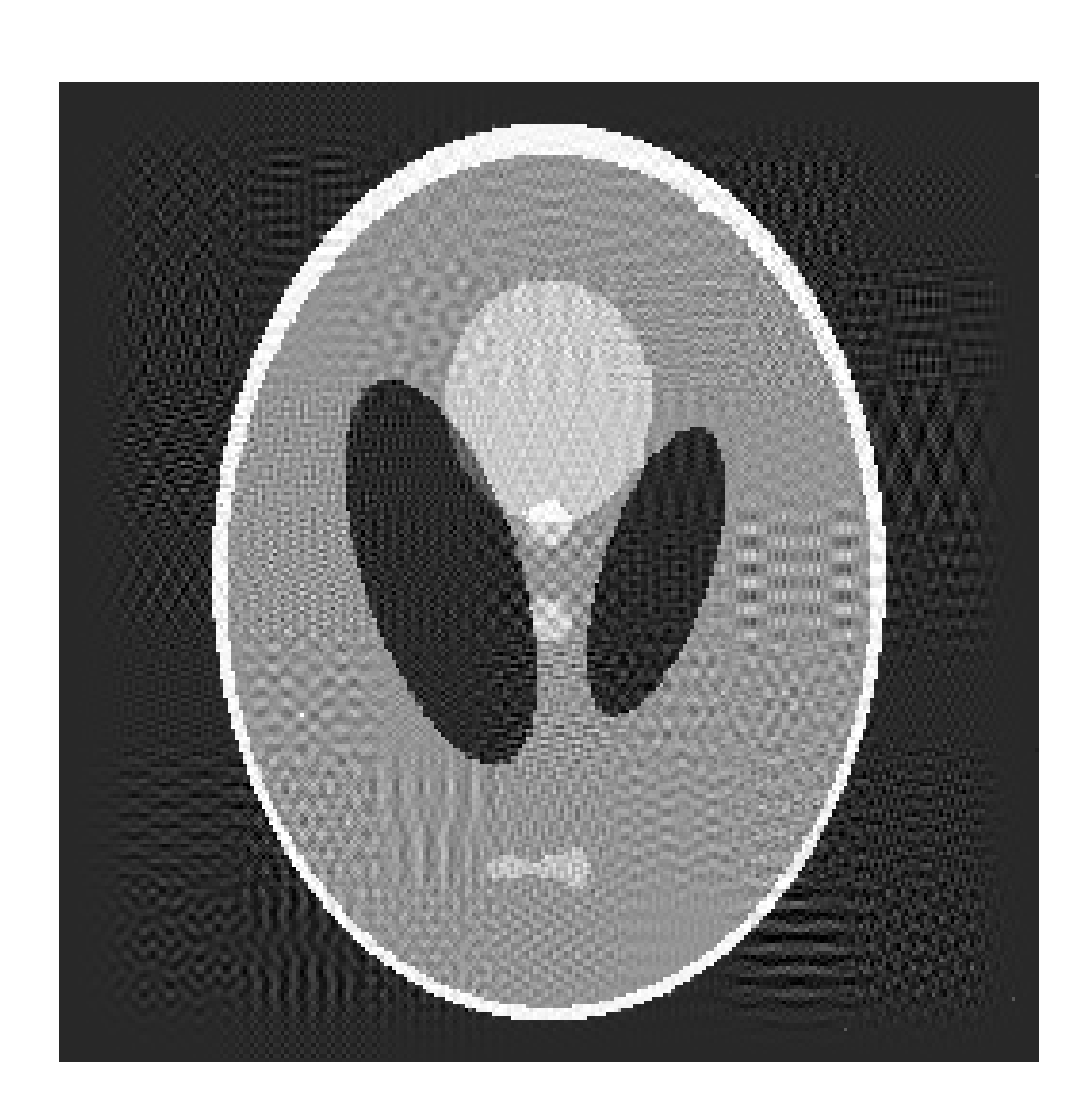}}
  \subcaptionbox{Modified SACoSaMP - {\em phantom} reconstruction  \label{fig:phantom_clean_rec_sep}}{%
  \includegraphics[width=0.24\columnwidth]{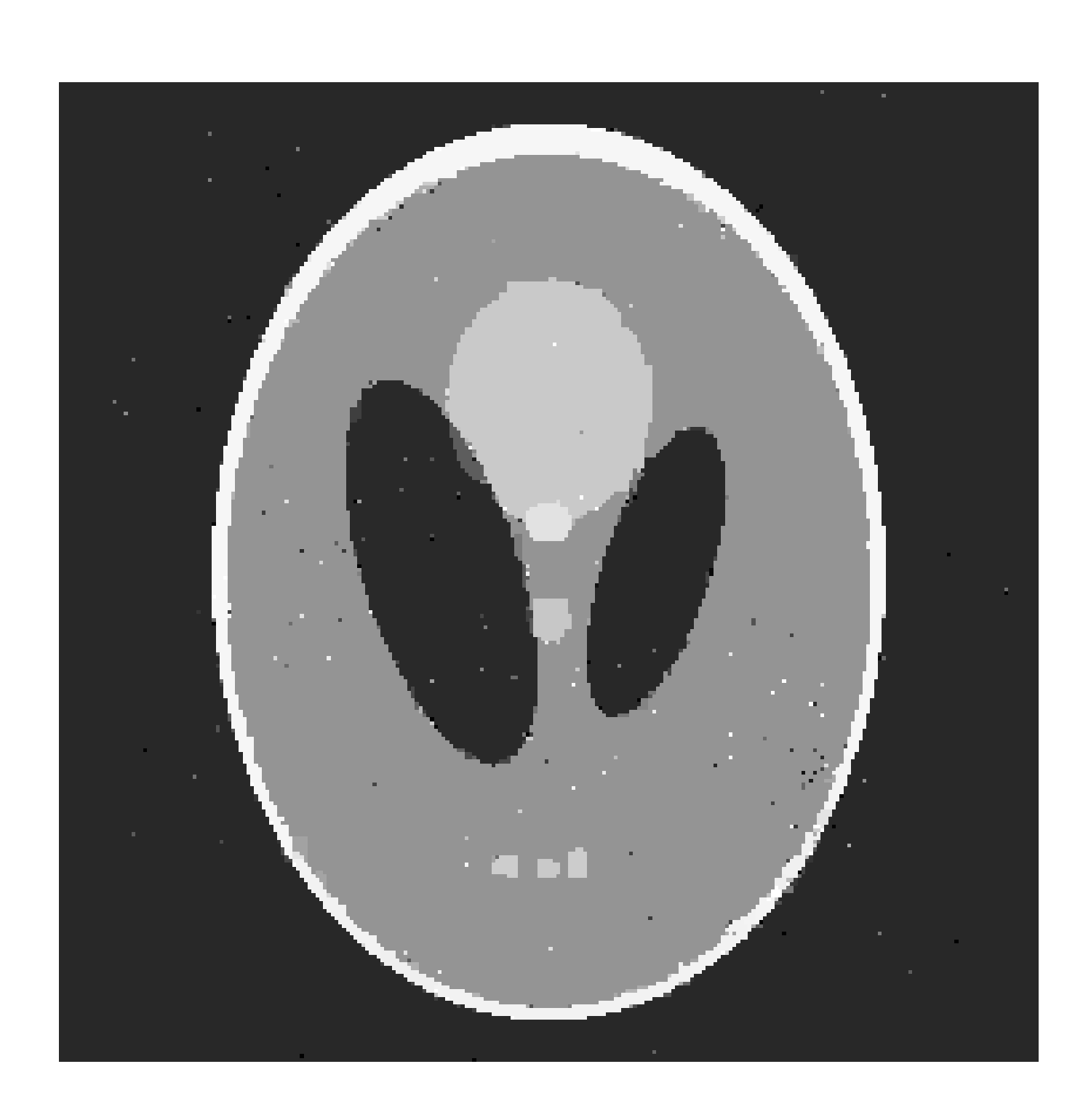}}
  \subcaptionbox{ACoSaMP reconstruction  \label{fig:phantom_acosamp}}{%
  \includegraphics[width=0.24\columnwidth]{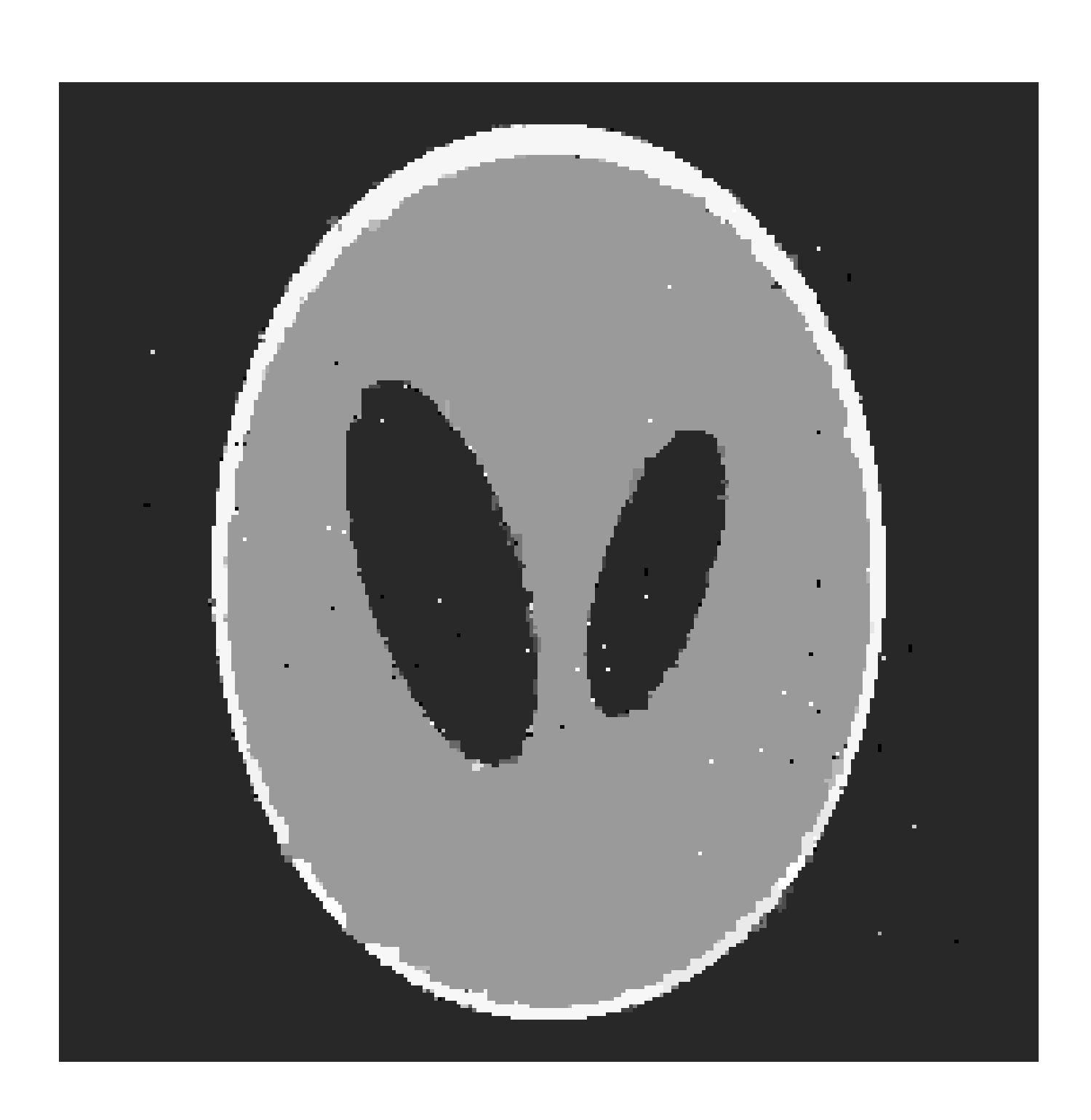}}
  \caption{Experiments inputs and results for modified Shepp-Logan phantom test image. From left to right and top to bottom: {\em phantom} image, noisy {\em phantom} image (textured noise), binary mask for Fourier domain sampling, na\"{\i}ve reconstruction using zero padding and inverse Fourier transform, SACoSaMP reconstruction of {\em phantom} image (cosparse analysis part), SACoSaMP reconstruction of noisy {\em phantom} image (cosparse analysis + sparse synthesis parts), modified SACoSaMP (split LS) reconstruction of {\em phantom} image (cosparse analysis part), and ACoSaMP reconstruction. }\label{fig2}
\end{figure}

%\vspace{5mm}

%%%%%%%%%%%%%%%%%%%%%%%%%%%%%%%%%%%%%%%%%%%%%%%%%%%%%%%%%%%%%
%%%%%%%%%%%%%%%%%%%%%%%%%%%%%%%%%%%%%%%%%%%%%%%%%%%%%%%%%%%%%
%%
%% SECTION VI: Conclusions
%%
%%%%%%%%%%%%%%%%%%%%%%%%%%%%%%%%%%%%%%%%%%%%%%%%%%%%%%%%%%%%%
%%%%%%%%%%%%%%%%%%%%%%%%%%%%%%%%%%%%%%%%%%%%%%%%%%%%%%%%%%%%%

\section{Conclusions}
\label{Sec6}

Under the assumption of Gaussian measurement matrix, we studied a generalized version of the CoSaMP algorithm (GCoSaMP), and provided general recovery guarantees that depend on the Gaussian mean width of the union of subspaces in which the signal resides. We discussed the evaluation of the Gaussian mean width and necessary relaxations of the generalized algorithm in various specific models. We proved that when no relaxation is done, the effect of any (non-adversarial) stationary measurement noise vanishes as the number of measurements grows. This is the case in CoSaMP and ADMiRA, though the proof techniques used in previous works did not lead to this general conclusion.
We proposed a new method, inspired by GCoSaMP, for signal recovery in a combined sparse synthesis and cosparse analysis model, and performed experiments that demonstrate its usefulness.

One direction for future research is evaluating the Gaussian mean width of unions of subspaces in specific models for which this quantity is still unknown. Another direction is designing recovery algorithms for combined models, which follows GCoSaMP more closely than our newly proposed SACoSaMP or other solutions such as SpaRCS. Future research may also focus on extending the recovery guarantees of GCoSaMP to measurement matrix with non-Gaussian distribution. This may be done using the proof technique proposed in \cite{Ai14One}.

%%%%%%%%%%%%%%%%%%%%%%%%%%%%%%%%%%%%%%%%%%%%%%%%%%%%%%%%%%%%%
%%%%%%%%%%%%%%%%%%%%%%%%%%%%%%%%%%%%%%%%%%%%%%%%%%%%%%%%%%%%%
%%
%% Appendices
%%
%%%%%%%%%%%%%%%%%%%%%%%%%%%%%%%%%%%%%%%%%%%%%%%%%%%%%%%%%%%%%
%%%%%%%%%%%%%%%%%%%%%%%%%%%%%%%%%%%%%%%%%%%%%%%%%%%%%%%%%%%%%

\appendices %\appendix

\section{Proof of Lemma \ref{lemma2}}

Using the triangle inequality, we have
\begin{align}
\label{Eq_lemma2_triangle}
\| \x^t - \x \|_2 & = \| \x^t - \tilde{\x}^t + \tilde{\x}^t - \x \|_2 \\ \nonumber
& \leq \| \tilde{\x}^t - \x^t \|_2 + \| \tilde{\x}^t - \x \|_2
\end{align}
Note that $\x \in  \mathcal{V}_0 \in \mathcal{S}$, and also $\x^t \in \mathcal{V}^t \in \mathcal{S}$. By construction $\x^t$ is the nearest to $\tilde{\x}^t$, in the $\ell_2$-norm sense, among all vectors that reside in one of the subspaces in the set $\mathcal{S}$. Therefore, $\| \tilde{\x}^t - \x^t \|_2 \leq \| \tilde{\x}^t - \x \|_2$, which together with (\ref{Eq_lemma2_triangle}) leads to (\ref{Eq_lemma2}).

\section{Proof of Lemma \ref{lemma1}}

 To simplify the notations we define $\z \triangleq \tilde{\x}^t - \x$. Since $\A\tilde{\x}^t$ is the orthogonal projection of $\y$ onto the subspace defined by $\{\A\u : \u \in \tilde{\mathcal{V}}^t \}$, then for any $\u \in \tilde{\mathcal{V}}^t$ we have
\begin{align}
\label{Eq_lemma1_ortho1}
\langle \A\tilde{\x}^t -\y , \A \u \rangle = 0.
\end{align}
Substituting (\ref{Eq_model}) with simple arithmetics gives
\begin{align}
\label{Eq_lemma1_ortho2}
\langle \z , \A^*\A \u \rangle = \langle \e , \A \u \rangle.
\end{align}
Letting $\mu_1$ and $\eta$ be positive parameters satisfying $\mu_1 \leq \left ( b_m+w(\mathcal U^{4} \cap \Bbb S^{n-1})+\eta \right )^{-2}$ and turning to look at $\left \| \P_{\tilde{\mathcal{V}}^t}\z \right \|_2^2$, we get
\begin{align}
\label{Eq_lemma1_Pt_square}
\left \| \P_{\tilde{\mathcal{V}}^t}\z \right \|_2^2  &= \langle \z , \P_{\tilde{\mathcal{V}}^t}\z \rangle \\ \nonumber
&= \langle \z , (\I_n - \mu_1 \A^*\A) \P_{\tilde{\mathcal{V}}^t} \z \rangle + \langle \z , \mu_1 \A^*\A \P_{\tilde{\mathcal{V}}^t} \z \rangle \\ \nonumber
&= \langle \z , (\I_n - \mu_1 \A^*\A) \P_{\tilde{\mathcal{V}}^t} \z \rangle + \langle \e , \mu_1 \A \P_{\tilde{\mathcal{V}}^t} \z \rangle \\ \nonumber
&= \langle \z , \P_{\tilde{\mathcal{V}}^t + \mathcal{V}_0} (\I_n - \mu_1 \A^*\A) \P_{\tilde{\mathcal{V}}^t + \mathcal{V}_0} \P_{\tilde{\mathcal{V}}^t} \z \rangle + \langle \e , \mu_1 \A \P_{\tilde{\mathcal{V}}^t} \z \rangle \\ \nonumber
& \leq \| \z \|_2 \left \| \P_{\tilde{\mathcal{V}}^t + \mathcal{V}_0} (\I_n - \mu_1 \A^*\A) \P_{\tilde{\mathcal{V}}^t + \mathcal{V}_0} \right \| \left \| \P_{\tilde{\mathcal{V}}^t} \z \right \|_2 + \mu_1 \langle \e , \A \P_{\tilde{\mathcal{V}}^t} \z \rangle.
\end{align}
The third equality follows from (\ref{Eq_lemma1_ortho2}) with $\u=\P_{\tilde{\mathcal{V}}^t}\z$, the fourth equality follows from $\P_{\tilde{\mathcal{V}}^t + \mathcal{V}_0}\z=\z$ and $\P_{\tilde{\mathcal{V}}^t} =\P_{\tilde{\mathcal{V}}^t + \mathcal{V}_0}\P_{\tilde{\mathcal{V}}^t}$, and the inequality follows from Cauchy-Schwarz inequality.
Define $\tilde{\z} \triangleq \frac{\P_{\tilde{\mathcal{V}}^t}\z}{\left \| \P_{\tilde{\mathcal{V}}^t} \z \right \|_2}$ and $\tilde{\Omega} \triangleq \tilde{\mathcal{V}}^t \cap \Bbb S^{n-1}$, thus $\tilde{\z} \in \tilde{\Omega}$. Note that $\tilde{\Omega}$ may differ from iteration to iteration but it is always a subset of $\mathcal U^{3} \cap \Bbb S^{n-1}$. Dividing (\ref{Eq_lemma1_Pt_square}) by $\left \| \P_{\tilde{\mathcal{V}}^t} \z \right \|_2$ we have
\begin{align}
\label{Eq_lemma1_Pt}
\left \| \P_{\tilde{\mathcal{V}}^t} \z \right \|_2  \leq \| \z \|_2 \| \P_{\tilde{\mathcal{V}}^t + \mathcal{V}_0} (\I_n - \mu_1 \A^*\A) \P_{\tilde{\mathcal{V}}^t + \mathcal{V}_0} \| + \mu_1 \|\e\|_2 \langle \frac{\e}{\|\e\|_2} , \A \tilde{\z} \rangle.
\end{align}
Focusing on the last term, for a fixed error vector $\e$, note that $\g \triangleq \A^* \frac{\e}{\|\e\|_2} \sim \mathcal{N}(\0,\I_n)$. Applying standard results on concentration of supremum of Gaussian processes we get that
\begin{align}
\label{Eq_lemma1_e_term}
\langle \frac{\e}{\|\e\|_2} , \A \tilde{\z} \rangle & \leq \underset{\tilde{\z} \in \tilde{\Omega}}{\operatorname{sup}} \langle  \g , \tilde{\z} \rangle \\ \nonumber
& \leq w(\mathcal U^{3} \cap \Bbb S^{n-1}) + \eta,
\end{align}
holds for all $t$, with probability at least $1-\mathrm{e}^{-\frac{\eta^2}{2}}$.
Using Lemma \ref{lemma_intro}, (\ref{Eq_lemma1_Pt}) and (\ref{Eq_lemma1_e_term}) we have that
\begin{align}
\label{Eq_lemma1_Pt_final}
\left \| \P_{\tilde{\mathcal{V}}^t} \z \right \|_2  \leq \rho_{1}(\eta) \| \z \|_2 + \xi_{1}(\eta) \|\e\|_2,
\end{align}
holds for all t, with probability at least $1-3\mathrm{e}^{-\frac{\eta^2}{2}}$, where $\rho_{1}(\eta)$ and $\xi_{1}(\eta)$ are defined in (\ref{Eq_theorem_rho1}) and (\ref{Eq_theorem_xi1}). Plugging (\ref{Eq_lemma1_Pt_final}) in $\left \| \z \right \|_2^2 = \left \| \Q_{\tilde{\mathcal{V}}^t} \z \right \|_2^2 + \left \| \P_{\tilde{\mathcal{V}}^t} \z \right \|_2^2$  gives
\begin{align}
\label{Eq_lemma1_x_poly}
(1- \rho_{1}^2(\eta)) \left \| \z \right \|_2^2 - 2  \rho_{1}(\eta)  \xi_{1}(\eta) \|\e\|_2  \left \| \z \right \|_2 -  \left \| \Q_{\tilde{\mathcal{V}}^t} \z \right \|_2^2 - \xi_{1}^2(\eta)  \left \|\e \right \|_2^2 \leq 0. 
\end{align}
The left-hand side of the last equation is a second order polynomial of $\left \| \z \right \|_2$ with positive leading coefficient, hence it follows that $\left \| \z \right \|_2$ is upper bounded by the larger root. Equation (\ref{Eq_lemma1}) is obtained by calculating this bound and using the fact that $\sqrt{a+b} \leq \sqrt{a} + \sqrt{b}$ for nonnegative $a$ and $b$.

\section{Proof of Lemma \ref{lemma3}}
\label{AppendixC}

Our proof is inspired by the proof of Lemma A.1 in \cite{davenport2013signal}. The main differences between the proofs are in the later steps, where our arguments follow from concentration of measure, while those in \cite{davenport2013signal} follow from a generalization of the RIP.
We start with upper bounding  the term $\| \Q_{\mathcal{V}_{\Delta}^t}(\x - \x^{t-1}) \|_2$. To simplify the notations we define $\v \triangleq \x - \x^{t-1}$, $\overline{\mathcal{V}} \triangleq \mathcal{V}_0 + \mathcal{V}^{t-1}$, and note that $\tilde{\v}=\A^*(\y-\A\x^{t-1})=\A^*\A\v+\A^*\e$. Letting $\mu_2$ and $\eta$ be positive parameters satisfying $\mu_2 \leq \left ( b_m+w(\mathcal U^{4} \cap \Bbb S^{n-1})+\eta \right )^{-2}$, we get
\begin{align}
\label{Eq_lemma3_main}
\| \Q_{\mathcal{V}_{\Delta}^t}\v \|_2 & = \| \v -  \P_{\mathcal{V}_{\Delta}^t}\v \|_2 \\ \nonumber
& \leq \| \v -  \P_{\mathcal{V}_{\Delta}^t}(\mu_2 \tilde{\v}) \|_2 \\ \nonumber
& = \| \v - \mu_2 \P_{\overline{\mathcal{V}} + \mathcal{V}_{\Delta}^t}\tilde{\v} + \mu_2 \P_{\overline{\mathcal{V}} + \mathcal{V}_{\Delta}^t}\tilde{\v} - \mu_2 \P_{\mathcal{V}_{\Delta}^t} \tilde{\v} \|_2  \\ \nonumber
& \leq \| \v - \mu_2 \P_{\overline{\mathcal{V}} + \mathcal{V}_{\Delta}^t}\tilde{\v} \|_2 + \mu_ 2 \| \P_{\overline{\mathcal{V}} + \mathcal{V}_{\Delta}^t}\tilde{\v} - \P_{\mathcal{V}_{\Delta}^t} \tilde{\v} \|_2  \\ \nonumber
& \leq \| \v - \mu_2 \P_{\overline{\mathcal{V}} + \mathcal{V}_{\Delta}^t}\A^*\A\v \|_2 + \mu_2 \| \P_{\overline{\mathcal{V}} + \mathcal{V}_{\Delta}^t}\A^*\e \|_2 + \mu_ 2 \| \P_{\overline{\mathcal{V}} + \mathcal{V}_{\Delta}^t}\tilde{\v} - \P_{\mathcal{V}_{\Delta}^t} \tilde{\v} \|_2.
\end{align}
The first inequality follows from the fact that $\P_{\mathcal{V}_{\Delta}^t}\v$ is the nearest to $\v$,  in the $\ell_2$-norm sense, among all vectors in $\mathcal{V}_{\Delta}^t$. The last two inequalities follow from the triangle inequality. Focusing on the last term
\begin{align}
\label{Eq_lemma3_lastTerm}
\mu_ 2 \| \P_{\overline{\mathcal{V}} + \mathcal{V}_{\Delta}^t}\tilde{\v} - \P_{\mathcal{V}_{\Delta}^t} \tilde{\v} \|_2 & \leq \mu_ 2 \| \P_{\overline{\mathcal{V}} + \mathcal{V}_{\Delta}^t}\tilde{\v} - \P_{\overline{\mathcal{V}}} \tilde{\v} \|_2 \\ \nonumber
& = \mu_ 2 \| \P_{\overline{\mathcal{V}} + \mathcal{V}_{\Delta}^t}\tilde{\v} - \P_{\overline{\mathcal{V}}} \P_{\overline{\mathcal{V}} + \mathcal{V}_{\Delta}^t} \tilde{\v} \|_2 \\ \nonumber
& \leq \mu_ 2 \| \P_{\overline{\mathcal{V}} + \mathcal{V}_{\Delta}^t}\tilde{\v} - \P_{\overline{\mathcal{V}}} (\frac{1}{\mu_2}\v + \A^*\e) \|_2 \\ \nonumber
& = \| \mu_ 2 \P_{\overline{\mathcal{V}} + \mathcal{V}_{\Delta}^t} (\A^*\A\v + \A^*\e) - \v - \mu_ 2 \P_{\overline{\mathcal{V}}} \A^*\e \|_2 \\ \nonumber
& \leq \| \v - \mu_ 2 \P_{\overline{\mathcal{V}} + \mathcal{V}_{\Delta}^t} \A^*\A\v \|_2 + \mu_ 2 \| \P_{\overline{\mathcal{V}} + \mathcal{V}_{\Delta}^t} \A^*\e - \P_{\overline{\mathcal{V}}} \A^*\e \|_2.
\end{align}
The first inequality follows from the fact that both $\mathcal{V}_{\Delta}^t$ and $\overline{\mathcal{V}}$ are subsets of $\mathcal{V}_{\Delta}^t + \overline{\mathcal{V}}$, and that by construction $\P_{\mathcal{V}_{\Delta}^t}\tilde{\v}$ is closer to $\tilde{\v}$ than $\P_{\overline{\mathcal{V}}}\tilde{\v}$ (since $\mathcal{V}_{\Delta}^t, \overline{\mathcal{V}} \in \mathcal{S}^2$). For more details look at the discussion above (13) in \cite{davenport2013signal}. The second inequality results from $\P_{\overline{\mathcal{V}}} \P_{\overline{\mathcal{V}} + \mathcal{V}_{\Delta}^t} \tilde{\v}$ being the nearest to $\P_{\overline{\mathcal{V}} + \mathcal{V}_{\Delta}^t} \tilde{\v}$ in the $\ell_2$-norm sense among all vectors in $\overline{\mathcal{V}}$, and the last inequality uses the triangle inequality. Note also that
\begin{align}
\label{Eq_lemma3_lastTerm2}
\| \P_{\overline{\mathcal{V}} + \mathcal{V}_{\Delta}^t} \A^*\e - \P_{\overline{\mathcal{V}}} \A^*\e \|_2 & = \| \P_{\overline{\mathcal{V}} + \mathcal{V}_{\Delta}^t} \A^*\e - \P_{\overline{\mathcal{V}}} \P_{\overline{\mathcal{V}} + \mathcal{V}_{\Delta}^t} \A^*\e \|_2 \\ \nonumber
& = \| (\I_n - \P_{\overline{\mathcal{V}}}) \P_{\overline{\mathcal{V}} + \mathcal{V}_{\Delta}^t} \A^*\e \|_2 \\ \nonumber
& \leq \| \P_{\overline{\mathcal{V}} + \mathcal{V}_{\Delta}^t} \A^*\e \|_2,
\end{align}
where the last inequality is due to fact that $\I_n - \P_{\overline{\mathcal{V}}}$ is an orthogonal projection and hence its spectral norm is bounded by 1.
Using (\ref{Eq_lemma3_main}), (\ref{Eq_lemma3_lastTerm}) and (\ref{Eq_lemma3_lastTerm2}), we get
\begin{align}
\label{Eq_lemma3_main2}
\| \Q_{\mathcal{V}_{\Delta}^t}\v \|_2 \leq 2 \| \v - \mu_2 \P_{\overline{\mathcal{V}} + \mathcal{V}_{\Delta}^t}\A^*\A\v \|_2 + 2 \mu_2 \| \P_{\overline{\mathcal{V}} + \mathcal{V}_{\Delta}^t}\A^*\e \|_2.
\end{align}
Dealing with the first term of (\ref{Eq_lemma3_main2}), using $\P_{\overline{\mathcal{V}} + \mathcal{V}_{\Delta}^t} \v=\v$ and Lemma \ref{lemma_intro}, we get 
\begin{align}
\label{Eq_lemma3_main2_1}
\| \v - \mu_2 \P_{\overline{\mathcal{V}} + \mathcal{V}_{\Delta}^t}\A^*\A\v \|_2 & = \| \P_{\overline{\mathcal{V}} + \mathcal{V}_{\Delta}^t} (\I_n - \mu_2 \A^*\A ) \P_{\overline{\mathcal{V}} + \mathcal{V}_{\Delta}^t} \v \|_2 \\ \nonumber
& \leq \left \| \P_{\overline{\mathcal{V}} + \mathcal{V}_{\Delta}^t} (\I_n - \mu_2 \A^*\A ) \P_{\overline{\mathcal{V}} + \mathcal{V}_{\Delta}^t} \right \| \| \v \|_2 \\ \nonumber
& \leq \rho_{2}(\eta) \| \v \|_2,
\end{align}
where the last inequality holds for all $t$, with probability at least $1-2\mathrm{e}^{-\frac{\eta^2}{2}}$, and $\rho_{2}(\eta)$ is defined in (\ref{Eq_theorem_rho2}).
Focusing on the second term in (\ref{Eq_lemma3_main2}), using the fact that $\g \triangleq \A^* \frac{\e}{\|\e\|_2} \sim \mathcal{N}(\0,\I_n)$ for a fixed error vector, and the fact that if $\z \in \Bbb R^n$ has unit Euclidean norm, then $\P_{\overline{\mathcal{V}} + \mathcal{V}_{\Delta}^t} \z \in \mathcal U^{4} \cap \Bbb B^n$, we get
\begin{align}
\label{Eq_lemma3_main2_2}
\| \P_{\overline{\mathcal{V}} + \mathcal{V}_{\Delta}^t}\A^*\e \| & = \underset{\z : \|\z\|_2=1}{\operatorname{sup}} \langle \z , \P_{\overline{\mathcal{V}} + \mathcal{V}_{\Delta}^t}\A^*\e \rangle \\ \nonumber
& = \underset{\z : \|\z\|_2=1}{\operatorname{sup}} \langle \P_{\overline{\mathcal{V}} + \mathcal{V}_{\Delta}^t} \z , \g \rangle \|\e\|_2 \\ \nonumber
& \leq \left [ w(\mathcal U^{4} \cap \Bbb B^n) + \eta \right ] \|\e\|_2 \\ \nonumber
& = \left [ w(\mathcal U^{4} \cap \Bbb S^{n-1}) + \eta  \right ] \|\e\|_2.
\end{align}
The inequality follows from standard results on concentration of supremum of Gaussian processes, similarly to (\ref{Eq_lemma1_e_term}), and holds for all $t$, with probability at least $1-\mathrm{e}^{-\frac{\eta^2}{2}}$.
The last equality follows from the fact that for a linear subspace $\mathcal L$ we have $\underset{\z \in \mathcal{L} \cap \Bbb B^n}{\operatorname{sup}} \langle \g,\z \rangle = \underset{\z \in \mathcal{L} \cap \Bbb S^{n-1}}{\operatorname{sup}} \langle \g,\z \rangle$. In details, if $\g$ is orthogonal to $\mathcal{L}$ then this is trivial, else, the maximum is achieved at an extreme point of $\mathcal{L} \cap \Bbb B^n$, i.e., a point in $\mathcal{L} \cap \Bbb S^{n-1}$. Similarly, for a union of linear subspaces the maximum is achieved at an extreme point in the subspace that is closest to $\g$ in angular distance.
We finish with the observation
\begin{align}
\label{Eq_lemma3_main3}
\| \Q_{\tilde{\mathcal{V}}^t}(\tilde{\x}^t-\x) \|_2 & \leq \| \Q_{\tilde{\mathcal{V}}^t}(\tilde{\x}^t-\x^{t-1}) \|_2 + \| \Q_{\tilde{\mathcal{V}}^t}(\x - \x^{t-1}) \|_2 \\ \nonumber
& = \| \Q_{\tilde{\mathcal{V}}^t}(\x - \x^{t-1}) \|_2 \\ \nonumber
& \leq \| \Q_{\mathcal{V}_{\Delta}^t}(\x - \x^{t-1}) \|_2.
\end{align}
The first inequality uses the triangle inequality, the equality follows from $\tilde{\x}^t, \x^{t-1} \in \tilde{\mathcal{V}}^t$, and the last inequality follows from $\mathcal{V}_{\Delta}^t \subset \tilde{\mathcal{V}}^t$.
Combining (\ref{Eq_lemma3_main2}) - (\ref{Eq_lemma3_main3}), we get (\ref{Eq_lemma3}).

%%%%%%%%%%%%%%%%%%%%%%%%%%%%%%%%%%%%%%%%%%%%%%%%%%%%%%%%%%%%%
%%%%%%%%%%%%%%%%%%%%%%%%%%%%%%%%%%%%%%%%%%%%%%%%%%%%%%%%%%%%%
%%
%% References
%%
%%%%%%%%%%%%%%%%%%%%%%%%%%%%%%%%%%%%%%%%%%%%%%%%%%%%%%%%%%%%%
%%%%%%%%%%%%%%%%%%%%%%%%%%%%%%%%%%%%%%%%%%%%%%%%%%%%%%%%%%%%%

%\newpage
\bibliographystyle{ieeetr}
\bibliography{paper_ver_for_arXiv}

\begin{thebibliography}{10}

\bibitem{bruckstein2009sparse}
A.~M. Bruckstein, D.~L. Donoho, and M.~Elad, ``From sparse solutions of systems
  of equations to sparse modeling of signals and images,'' {\em SIAM review},
  vol.~51, no.~1, pp.~34--81, 2009.

\bibitem{nam2013cosparse}
S.~Nam, M.~E. Davies, M.~Elad, and R.~Gribonval, ``The cosparse analysis model
  and algorithms,'' {\em Applied and Computational Harmonic Analysis}, vol.~34,
  no.~1, pp.~30--56, 2013.

\bibitem{candes2009exact}
E.~J. Cand{\`e}s and B.~Recht, ``Exact matrix completion via convex
  optimization,'' {\em Foundations of Computational mathematics}, vol.~9,
  no.~6, pp.~717--772, 2009.

\bibitem{recht2010guaranteed}
B.~Recht, M.~Fazel, and P.~A. Parrilo, ``Guaranteed minimum-rank solutions of
  linear matrix equations via nuclear norm minimization,'' {\em SIAM review},
  vol.~52, no.~3, pp.~471--501, 2010.

\bibitem{baraniuk2009random}
R.~G. Baraniuk and M.~B. Wakin, ``Random projections of smooth manifolds,''
  {\em Foundations of computational mathematics}, vol.~9, no.~1, pp.~51--77,
  2009.

\bibitem{yu2012solving}
G.~Yu, G.~Sapiro, and S.~Mallat, ``Solving inverse problems with piecewise
  linear estimators: From {G}aussian mixture models to structured sparsity,''
  {\em IEEE Transactions on Image Processing}, vol.~21, no.~5, pp.~2481--2499,
  2012.

\bibitem{elad2010book}
M.~Elad, {\em Sparse and Redundant Representations: From Theory to Applications
  in Signal and Image Processing}.
\newblock Springer, 2010.

\bibitem{eldar2012compressed}
Y.~C. Eldar and G.~Kutyniok, {\em Compressed sensing: theory and applications}.
\newblock Cambridge University Press, 2012.

\bibitem{foucart2013mathematical}
S.~Foucart and H.~Rauhut, {\em A mathematical introduction to compressive
  sensing}, vol.~1.
\newblock Springer, 2013.

\bibitem{chandrasekaran2012convex}
V.~Chandrasekaran, B.~Recht, P.~A. Parrilo, and A.~S. Willsky, ``The convex
  geometry of linear inverse problems,'' {\em Foundations of Computational
  mathematics}, vol.~12, no.~6, pp.~805--849, 2012.

\bibitem{plan2013robust}
Y.~Plan and R.~Vershynin, ``Robust 1-bit compressed sensing and sparse logistic
  regression: A convex programming approach,'' {\em IEEE Transactions on
  Information Theory}, vol.~59, no.~1, pp.~482--494, 2013.

\bibitem{amelunxen2014living}
D.~Amelunxen, M.~Lotz, M.~B. McCoy, and J.~A. Tropp, ``Living on the edge:
  Phase transitions in convex programs with random data,'' {\em Information and
  Inference}, p.~iau005, 2014.

\bibitem{candes2008introduction}
E.~J. Cand{\`e}s and M.~B. Wakin, ``An introduction to compressive sampling,''
  {\em IEEE signal processing magazine}, vol.~25, no.~2, pp.~21--30, 2008.

\bibitem{duarte2008single}
M.~F. Duarte, M.~A. Davenport, D.~Takhar, J.~N. Laska, T.~Sun, K.~E. Kelly,
  R.~G. Baraniuk, {\em et~al.}, ``Single-pixel imaging via compressive
  sampling,'' {\em IEEE Signal Processing Magazine}, vol.~25, no.~2, p.~83,
  2008.

\bibitem{chen2001atomic}
S.~S. Chen, D.~L. Donoho, and M.~A. Saunders, ``Atomic decomposition by basis
  pursuit,'' {\em SIAM review}, vol.~43, no.~1, pp.~129--159, 2001.

\bibitem{candes2005decoding}
E.~J. Candes and T.~Tao, ``Decoding by linear programming,'' {\em IEEE
  transactions on information theory}, vol.~51, no.~12, pp.~4203--4215, 2005.

\bibitem{candes2006stable}
E.~J. Candes, J.~K. Romberg, and T.~Tao, ``Stable signal recovery from
  incomplete and inaccurate measurements,'' {\em Communications on pure and
  applied mathematics}, vol.~59, no.~8, pp.~1207--1223, 2006.

\bibitem{candes2007dantzig}
E.~Candes and T.~Tao, ``The {D}antzig selector: Statistical estimation when p
  is much larger than n,'' {\em The Annals of Statistics}, pp.~2313--2351,
  2007.

\bibitem{mallat1993matching}
S.~G. Mallat and Z.~Zhang, ``Matching pursuits with time-frequency
  dictionaries,'' {\em IEEE Transactions on signal processing}, vol.~41,
  no.~12, pp.~3397--3415, 1993.

\bibitem{chen1989orthogonal}
S.~Chen, S.~A. Billings, and W.~Luo, ``Orthogonal least squares methods and
  their application to non-linear system identification,'' {\em International
  Journal of control}, vol.~50, no.~5, pp.~1873--1896, 1989.

\bibitem{pati1993orthogonal}
Y.~C. Pati, R.~Rezaiifar, and P.~Krishnaprasad, ``Orthogonal matching pursuit:
  Recursive function approximation with applications to wavelet
  decomposition,'' in {\em Signals, Systems and Computers, 1993. 1993
  Conference Record of The Twenty-Seventh Asilomar Conference on}, pp.~40--44,
  IEEE, 1993.

\bibitem{needell2010signal}
D.~Needell and R.~Vershynin, ``Signal recovery from incomplete and inaccurate
  measurements via regularized orthogonal matching pursuit,'' {\em IEEE Journal
  of selected topics in signal processing}, vol.~4, no.~2, pp.~310--316, 2010.

\bibitem{blumensath2009iterative}
T.~Blumensath and M.~E. Davies, ``Iterative hard thresholding for compressed
  sensing,'' {\em Applied and Computational Harmonic Analysis}, vol.~27, no.~3,
  pp.~265--274, 2009.

\bibitem{dai2009subspace}
W.~Dai and O.~Milenkovic, ``Subspace pursuit for compressive sensing signal
  reconstruction,'' {\em IEEE Transactions on Information Theory}, vol.~55,
  no.~5, pp.~2230--2249, 2009.

\bibitem{needell2009cosamp}
D.~Needell and J.~A. Tropp, ``{CoSaMP}: Iterative signal recovery from
  incomplete and inaccurate samples,'' {\em Applied and Computational Harmonic
  Analysis}, vol.~26, no.~3, pp.~301--321, 2009.

\bibitem{baraniuk2010model}
R.~G. Baraniuk, V.~Cevher, M.~F. Duarte, and C.~Hegde, ``Model-based
  compressive sensing,'' {\em IEEE Transactions on Information Theory},
  vol.~56, no.~4, pp.~1982--2001, 2010.

\bibitem{lee2010admira}
K.~Lee and Y.~Bresler, ``{ADMiRA}: Atomic decomposition for minimum rank
  approximation,'' {\em IEEE Transactions on Information Theory}, vol.~56,
  no.~9, pp.~4402--4416, 2010.

\bibitem{davenport2013signal}
M.~A. Davenport, D.~Needell, and M.~B. Wakin, ``Signal space {CoSaMP} for
  sparse recovery with redundant dictionaries,'' {\em IEEE Transactions on
  Information Theory}, vol.~59, no.~10, pp.~6820--6829, 2013.

\bibitem{giryes2015greedy}
R.~Giryes and D.~Needell, ``Greedy signal space methods for incoherence and
  beyond,'' {\em Applied and Computational Harmonic Analysis}, vol.~39, no.~1,
  pp.~1--20, 2015.

\bibitem{giryes2014greedy}
R.~Giryes, S.~Nam, M.~Elad, R.~Gribonval, and M.~E. Davies, ``Greedy-like
  algorithms for the cosparse analysis model,'' {\em Linear Algebra and its
  Applications}, vol.~441, pp.~22--60, 2014.

\bibitem{lu2008theory}
Y.~M. Lu and M.~N. Do, ``A theory for sampling signals from a union of
  subspaces,'' {\em IEEE transactions on signal processing}, vol.~56, no.~6,
  pp.~2334--2345, 2008.

\bibitem{blumensath2009sampling}
T.~Blumensath and M.~E. Davies, ``Sampling theorems for signals from the union
  of finite-dimensional linear subspaces,'' {\em IEEE Transactions on
  Information Theory}, vol.~55, no.~4, pp.~1872--1882, 2009.

\bibitem{puy2015recipes}
G.~Puy, M.~Davies, and R.~Gribonval, ``Recipes for stable linear embeddings
  from {H}ilbert spaces to {$\Bbb R^m$},'' {\em arXiv preprint
  arXiv:1509.06947}, 2015.

\bibitem{eldar2009robust}
Y.~C. Eldar and M.~Mishali, ``Robust recovery of signals from a structured
  union of subspaces,'' {\em IEEE Transactions on Information Theory}, vol.~55,
  no.~11, pp.~5302--5316, 2009.

\bibitem{blumensath2011sampling}
T.~Blumensath, ``Sampling and reconstructing signals from a union of linear
  subspaces,'' {\em IEEE Transactions on Information Theory}, vol.~57, no.~7,
  pp.~4660--4671, 2011.

\bibitem{elad2005simultaneous}
M.~Elad, J.-L. Starck, P.~Querre, and D.~L. Donoho, ``Simultaneous cartoon and
  texture image inpainting using morphological component analysis ({MCA}),''
  {\em Applied and Computational Harmonic Analysis}, vol.~19, no.~3,
  pp.~340--358, 2005.

\bibitem{gordon1988milman}
Y.~Gordon, {\em On Milman's inequality and random subspaces which escape
  through a mesh in {$\Bbb R^n$}}.
\newblock Springer, 1988.

\bibitem{ledoux2013probability}
M.~Ledoux and M.~Talagrand, {\em Probability in Banach Spaces: isoperimetry and
  processes}.
\newblock Springer Science \& Business Media, 2013.

\bibitem{giryes2012rip}
R.~Giryes and M.~Elad, ``{RIP}-based near-oracle performance guarantees for
  {SP}, {CoSaMP}, and {IHT},'' {\em IEEE Transactions on Signal Processing},
  vol.~60, no.~3, pp.~1465--1468, 2012.

\bibitem{oymak2015sharp}
S.~Oymak, B.~Recht, and M.~Soltanolkotabi, ``Sharp time--data tradeoffs for
  linear inverse problems,'' {\em arXiv preprint arXiv:1507.04793}, 2015.

\bibitem{website2013robusttt}
Y.~Plan and R.~Vershynin, ``Robust 1-bit compressed sensing and sparse logistic
  regression: A convex programming approach.''
  \url{http://www-users.cselabs.umn.edu/classes/Spring-2014/csci8980-lshd/slides/spt5-slides.pdf},
  2014.

\bibitem{davis1997adaptive}
G.~Davis, S.~Mallat, and M.~Avellaneda, ``Adaptive greedy approximations,''
  {\em Constructive approximation}, vol.~13, no.~1, pp.~57--98, 1997.

\bibitem{vershynin2015estimation}
R.~Vershynin, ``Estimation in high dimensions: a geometric perspective,'' in
  {\em Sampling Theory, a Renaissance}, pp.~3--66, Springer, 2015.

\bibitem{tillmann2014projection}
A.~M. Tillmann, R.~Gribonval, and M.~E. Pfetsch, ``Projection onto the cosparse
  set is {NP}-hard,'' in {\em 2014 IEEE International Conference on Acoustics,
  Speech and Signal Processing (ICASSP)}, pp.~7148--7152, IEEE, 2014.

\bibitem{waters2011sparcs}
A.~E. Waters, A.~C. Sankaranarayanan, and R.~Baraniuk, ``{SpaRCS}: Recovering
  low-rank and sparse matrices from compressive measurements,'' in {\em
  Advances in neural information processing systems}, pp.~1089--1097, 2011.

\bibitem{Ai14One}
A.~Ai, A.~Lapanowski, Y.~Plan, and R.~Vershynin, ``One-bit compressed sensing
  with non-{G}aussian measurements,'' {\em Linear Algebra and its
  Applications}, vol.~441, pp.~222 -- 239, 2014.

\end{thebibliography}

\end{document}